 \newcommand*\patchAmsMathEnvironmentForLineno[1]{%
   \expandafter\let\csname old#1\expandafter\endcsname\csname #1\endcsname
   \expandafter\let\csname oldend#1\expandafter\endcsname\csname end#1\endcsname
   \renewenvironment{#1}%
      {\linenomath\csname old#1\endcsname}%
      {\csname oldend#1\endcsname\endlinenomath}}%
 \newcommand*\patchBothAmsMathEnvironmentsForLineno[1]{%
   \patchAmsMathEnvironmentForLineno{#1}%
   \patchAmsMathEnvironmentForLineno{#1*}}%
\newcommand{\mathbold}{\mathbf}
\newtheorem{thm}{Theorem}[section] \newtheorem{lem}[thm]{Lemma}
\newtheorem{corol}[thm]{Corollary} \newtheorem{prop}[thm]{Proposition}
\newtheorem*{thm*}{Theorem}
\theoremstyle{definition} \newtheorem{rmk}[thm]{Remark}
 \newtheorem{dfn}[thm]{Definition}
\newtheorem{step}{Step}
\newtheorem{steppe}{Step}
\newtheorem*{ack}{Acknowledgments}
\newcommand{\OO}{\mathscr{O}}
\newcommand{\FF}{\mathscr{F}}
\newcommand{\EE}{\mathscr{E}}
\newcommand{\GG}{\mathscr{G}}
\newcommand{\EExt}{\mathscr{E}xt}
\newcommand{\HHom}{\mathscr{H}om}
\newcommand{\TTor}{\mathscr{T}or}
\newcommand{\LL}{\mathscr{L}}
\newcommand{\PP}{\mathscr{P}}
\newcommand{\sH}{\mathscr{H}}
\newcommand{\sK}{\mathscr{K}}
\newcommand{\cH}{\mathcal{H}}
\newcommand{\cE}{\mathcal{E}}
\newcommand{\cK}{\mathcal{K}}
\newcommand{\cL}{\mathcal{L}}
\newcommand{\cF}{\mathcal{F}}
\newcommand{\cN}{\mathcal{N}}
\newcommand{\cP}{\mathcal{P}}
\newcommand{\cM}{\mathcal{M}}
\newcommand{\HHH}{\mathcal{H}}
\newcommand{\UU}{\mathcal{U}}
\newcommand{\cI}{\mathcal{I}}
\newcommand{\Mo}{{\sf M}}
\newcommand{\Wo}{{\sf W}}
\newcommand{\Po}{{\sf P}}
\newcommand{\No}{{\sf N}}
\newcommand{\Mos}{{\sf M^s}}
\DeclareMathOperator{\GL}{{\sf GL}}
\DeclareMathOperator{\Spin}{{\sf Spin}}
\DeclareMathOperator{\s}{{\sf S}}
\DeclareMathOperator{\rk}{rk}
\DeclareMathOperator{\Ext}{Ext} 
\DeclareMathOperator{\Sym}{Sym}
\DeclareMathOperator{\Hom}{Hom} 
\DeclareMathOperator{\im}{Im} \DeclareMathOperator{\cok}{cok}
\DeclareMathOperator{\HH}{H} \DeclareMathOperator{\hh}{h}
\DeclareMathOperator{\ext}{ext}
\DeclareMathOperator{\len}{len}
\DeclareMathOperator{\Pic}{Pic}
\DeclareMathOperator{\supp}{supp}
\newcommand{\Z}{\mathbb Z} \newcommand{\C}{\mathbb C}
 \newcommand{\p}{\mathbb P}
 \newcommand{\G}{\mathbb G}
\DeclareMathOperator{\D}{\mathbf{D^b}}
\newcommand{\RR}{\mathbold{R}}
\newcommand{\LLb}{\mathbold{L}}
\newcommand{\ph}{\mathbf{\Phi}}
\newcommand{\phs}{\mathbf{\Phi^{*}}}
\newcommand{\phx}{\mathbf{\Phi^{!}}}
\DeclareMathOperator{\ts}{\otimes}
\newcommand{\rr}{\rightarrow}
\newcommand{\mono}{\hookrightarrow}
\newcommand{\xr}{\xrightarrow}
\newcommand{\f}{F}
\newcommand{\e}{E}
\numberwithin{equation}{section}
\begin{document}


\title[Bundles on Fano threefolds of genus $7$]{Vector bundles on Fano threefolds \\
of Genus $7$ and Brill-Noether Loci}

\author{Maria Chiara Brambilla}
\email{{\tt brambilla@dipmat.univpm.it}}
\address{
Dipartimento di Scienze Matematiche,  Universit\`a Politecnica delle
Marche, Ancona, Italy}
\urladdr{\tt{\url{http://www.dipmat.univpm.it/~brambilla}}}

\author{Daniele Faenzi}
\email{{\tt daniele.faenzi@univ-pau.fr}}
\address{Universit\'e de Pau et des Pays de l'Adour \\
  Av. de l'universit\'e - BP 576 - 64012 PAU Cedex - France}
\urladdr{\tt{\url{http://www.univ-pau.fr/~dfaenzi1}}}

\thanks{Both authors were partially supported by Italian MIUR funds.
The second author was partially supported by GRIFGA
and by ANR contract Interlow ANR-09-JCJC-0097-01}


\keywords{Fano threefolds of genus 7.
  Moduli space of vector bundles.
  Brill-Noether theory for vector bundles on curves.
  Semiorthogonal decomposition.
  Lagrangian submanifold of symplectic manifolds.}

\subjclass[2000]{Primary 14J60. Secondary 14H30, 14F05, 14D20.}


\sloppy

\begin{abstract}
Given a smooth prime Fano threefold $X$ of genus $7$ we consider
its homologically projectively dual curve $\Gamma$ and the natural
integral functor $\phx:\D(X) \to \D(\Gamma)$.

We prove that, for $d\geq 6$, $\phx$ gives a birational map from a component of the
moduli scheme $\Mo_X(2,1,d)$ of rank $2$ stable sheaves on $X$ with
$c_1=1$, $c_2=d$ to a
generically smooth $(2\,d-9)$-dimensional component of the
Brill-Noether variety $W^{2\,d-11}_{d-5,5\,d-24}$
of stable vector bundles on
$\Gamma$ of rank $d-5$ and degree $5\,d-24$ with at least $2\,d-10$
sections.

This map turns out to be an isomorphism for $d=6$, and the moduli
space $\Mo_X(2,1,6)$ is fine. For general $X$, this moduli space
is a smooth irreducible threefold.
\end{abstract}

\maketitle

\section{Introduction}

Let $X$ be a smooth complex projective variety of dimension $3$,
with Picard number one, and assume that the anticanonical divisor
$K_{X}$ is ample. Then $X$ is called a Fano threefold, and one
defines the index $i_X$ of $X$ as the greatest integer $i$ such that
$-K_X/i$ lies in $\Pic(X)$. We are interested in the Maruyama moduli
scheme $\Mo_X(2,c_1,c_2)$ of semistable sheaves of rank $2$ and
Chern classes $c_1$, $c_2$, and with $c_3=0$, defined on a Fano
threefold $X$.

The maximum value of $i_X$ is $4$, and in this case $X$ must be
isomorphic to $\p^3$.
The study of the moduli space $\Mo_{\p^3}(2,c_1,c_2)$ was
pioneered by Barth in \cite{barth:some-properties},
and pursued later by several authors.
Roughly speaking, the main questions concern rationality,
irreducibility and smoothness of these moduli spaces;
many of them are still open.
Among the main tools to study the problem, we recall monads
and Beilinson's theorem, see \cite{barth-hulek},
\cite{beilinson:derived-and-linear} and \cite{okonek-schneider-spindler}.

The next case is $i_X = 3$. Then $X$ has to be isomorphic to a quadric
hypersurface.
This case was considered by Ein and Sols (\cite{ein-sols}) and later by
Ottaviani and Szurek, see \cite{ottaviani-szurek}.

In the case $i_X=2$, there are $5$ deformation classes of Fano
threefolds as it results from Iskovskikh's classification, see
\cite{fano-encyclo}. Perhaps the most studied among them is the
cubic hypersurface $V_3$ in $\p^4$. The geometry of these
threefolds is deeply linked to the properties of the families of
curves they contain. A cornerstone in this sense is the paper
\cite{clemens-griffiths} of Clemens and Griffiths on $V_3$. For a
survey of results about moduli spaces of vector bundles on $V_3$
we refer to \cite{beauville:cubic}. In particular we mention
\cite{druel:cubic-3-fold}, \cite{markushevich-tikhomirov} and \cite{ballico-miro-roig:cohomology}.

In the case $i_X = 1$, we say that $X$ is a {\it prime} Fano
threefold. Then, one defines the genus of $X$ as the integer $g =
-K_X^3/2+1$. The genus satisfies $2\leq g \leq 12$, $g\neq 11$,
and there are $10$ deformation classes of prime Fano threefolds.
Their birational geometry has been extensively studied as well,
see \cite{fano-encyclo}. The geometry of the moduli spaces of rank
$2$ vector bundles on $X$ has been more recently investigated by
several authors, for instance in the papers
\cite{iliev-markushevich:quartic} (for genus $3$),
\cite{iliev-markushevich:genus-7},
\cite{iliev-markushevich:sing-theta:asian} (for genus $7$),
\cite{iliev-manivel:genus-8}, \cite{iliev-markushevich:cubic} (for
genus $8$), \cite{iliev-ranestad} (for genus $9$),
\cite{enrique-dani:v22} (for genus $12$). Among the main tools we
mention the Abel-Jacobi map and Serre's correspondence between
rank $2$ vector bundles and curves contained in $X$.
\medskip

The purpose of the present paper is to investigate the properties of the
moduli spaces of rank $2$ bundles on a smooth prime Fano threefold
$X$, making use of homological methods.
We first observe that (under a mild generality assumption on $X$),
given any integer $d\geq g/2+1$, the moduli space $\Mo_X(2,1,d)$
contains a generically smooth
component $\Mo(d)$ of dimension $2\,d-g-2$, such that its
general
element $F$ is a stable
locally free sheaf with $\HH^1(X,F(-1))=0$, see Theorem
\ref{add-line}.

Then, we focus on genus $7$, where an analogue of Beilinson's theorem
is provided by the semiorthogonal decomposition of the bounded derived category
$\D(X)$ obtained by Kuznetsov in \cite{kuznetsov:v12}.
We use this decomposition to study the component $\Mo(d)$.
More precisely, we consider the
homologically projectively dual curve $\Gamma$ in the sense of
\cite{kuznetsov:hyperplane}, and the corresponding integral
functor $\phx : \D(X) \to \D(\Gamma)$.
Making use of the canonical resolution of a general element
of $\Mo(d)$,
we show that $\phx$ gives a birational map $\varphi$ from
$\Mo(d)$
to a component of $W^{2\,d-11}_{d-5,5\,d-24}$ (Theorem
\ref{thm:moduli-brill}),
where we denote by $W^{s}_{r,c}$ the
Brill-Noether variety of stable vector bundles on
$\Gamma$ of rank $r$ and degree $c$ with at least $s+1$
independent global sections.

We prove that the map $\varphi$ is in fact an isomorphism in the
case $d=6$. In particular the moduli space $\Mo_X(2,1,6)$ is fine
and isomorphic to a
connected threefold (Theorem \ref{thm:lune}, part \ref{lune-A}).
If $X$ is general
enough, the moduli space $\Mo_X(2,1,6)$ is actually smooth and
irreducible (Theorem \ref{thm:lune}, part \ref{lune-B}).
We also exhibit an involution of $\Mo_X(2,1,6)$
which interchanges the set of sheaves which are not globally
generated with the one of those which are not locally free.
Finally we show that, if $S$ is a general hyperplane section
surface, the space $\Mo_X(2,1,6)$ embeds as a Lagrangian
subvariety of $\Mo_S(2,1,6)$ with respect to the Mukai form, away
from finitely many double points (Theorem \ref{ty}).

The paper is organized as follows. In Section \ref{sec:pre} we
review the geometry of Fano threefolds $X$ of genus $7$ and the
structure of their derived category. In Section \ref{sec:3} we
construct (under mild generality assumptions) a generically smooth
component $\Mo(d)$ of $\Mo_{Y}(2,1,d)$, over a smooth prime Fano
threefold $Y$ (of any genus), and we recall some basic facts concerning bundles
with minimal $c_{2}$. Then again we work on genus $7$: in Section \ref{sezione:cubics}, we prove
that the functor $\phx$ provides an isomorphism between the
Hilbert scheme $\sH^0_3(X)$ and the symmetric cube $\Gamma^{(3)}$,
(see Theorem \ref{thm:cubic-iso}). Section \ref{sec:moduli-bn}
contains our main results.

\begin{ack}
We are grateful to Kieran O'Grady for pointing out the paper by
Tyurin, and to Dimitri Markushevich for helping us in making more
precise the generality assumption in Theorem \ref{thm:lune}.
We would like to thank the referee for the extremely accurate reading
of our manuscript, and for several comments that
greatly helped us to improve our paper.
\end{ack}

\section{Preliminaries} \label{sec:pre}

Let us introduce some basic material. The main notions we will need concern moduli spaces of
semistable sheaves, smooth Fano threefolds, Brill-Noether varieties and a bit of homological algebra.
Throughout the paper we work over the field of complex numbers.

\subsection{Notation and preliminary results}

Given a smooth connected complex projective $n$-dimensional polarized
variety $(X,H_X)$, and a sheaf $F$ on $X$, we write $F(t)$ for
$F\ts \OO_X(t H_X)$. Given a subscheme $Z$ of $X$, we write $F_Z$
for $F\ts \OO_Z$ and we denote by $\cI_{Z,X}$ the ideal sheaf of
$Z$ in $X$, and by $N_{Z,X}$ its normal sheaf. We will frequently
drop the second subscript. Given a pair of sheaves $(F,E)$ on $X$,
we will write $\ext_X^k(F,E)$ for the dimension of the vector space
$\Ext_X^k(F,E)$, and similarly $\hh^k(X,F) = \dim \HH^k(X,F)$. The
Euler characteristic of $(F,E)$ is defined as $\chi(F,E)=\sum_k
(-1)^k \ext_X^k(F,E)$ and $\chi(F)$ is defined as $\chi(\OO_X,F)$.
We denote by $p(F,t)$ the Hilbert polynomial $\chi(F(t))$ of the
sheaf $F$. The dualizing sheaf of $X$ is denoted by $\omega_{X}$.
We define also the natural evaluation map:
 \[
 e_{E,F}: \Hom_Y(E,F) \ts E \to F.
 \]

If the divisor $H_X$ embeds $X$ in $\p^m$,
we say that $X$ is
{\it ACM} (for {\it arithmetically Cohen-Macaulay}) if its coordinate ring is Cohen-Macaulay.
If $X$ is ACM and $n \ge 1$, a locally free sheaf $F$ on $X$ is called {\em ACM}
if it has no intermediate
cohomology, i.e. if $\HH^k(X,F(t))=0$ for all integer $t$ and for any
$0<k<n$. This is equivalent to $\oplus_{t\in \Z} \HH^0(X,F(t))$ being a
Cohen-Macaulay module over the coordinate ring of $X$.

\subsubsection{Coherent sheaves}

Let again $X$ be a smooth connected complex projective $n$-dimensional variety.
The Chern classes $c_k(F)$ are defined for a vector bundle $F$ on
$X$, and take values in $\HH^{k,k}(X)$.
In the sequel, the Chern classes will be denoted by integers as soon as
$\HH^{k,k}(X)$ has dimension $1$.
The Chern polynomial of a rank $r$ vector bundle $F$ is defined to be
$c_F(t)=1+c_1(F)t+\ldots+c_r(F)t^r$ and it is multiplicative on exact
sequences of vector bundles.
Hence the Chern classes can be defined on the Grothendieck group of vector bundles on $X$,
and in fact on the Grothendieck group of all coherent sheaves (they
are isomorphic on a smooth variety).
For more details see e.g. \cite[Appendix A]{hartshorne:ag}.

We denote by $F^*=\HHom_X(F,\OO_X)$ the dual of a coherent sheaf
$F$ on $X$. Recall that a coherent sheaf $F$ on $X$ is
{\it reflexive} if the natural map $F\to F^{**}$ of $F$ to its
double dual is an isomorphism. We recall here some basic facts on
reflexive sheaves, which will be useful in the sequel, and we refer to
\cite{hartshorne:stable-reflexive} for more details.
Any locally free sheaf is reflexive, and any reflexive sheaf is
torsion-free.
A coherent sheaf $F$ on $X$ is
reflexive if and only if it can be included into a locally free sheaf
$E$ with $E/F$ torsion-free, see
\cite[Proposition 1.1]{hartshorne:stable-reflexive}. Moreover, by
\cite[Proposition 1.9]{hartshorne:stable-reflexive},
any reflexive rank-$1$ sheaf is invertible (recall that $X$ is
smooth and irreducible).
Finally, we will often use a straightforward generalization of \cite[Proposition
2.6]{hartshorne:stable-reflexive} which implies that the third Chern
class $c_3(F)$ of a rank $2$ reflexive sheaf $F$ on a smooth projective
threefold satisfies $c_3(F)\ge0$, and vanishes if and only if
$F$ is locally free.

\subsubsection{Semistable sheaves and their moduli spaces}

We refer to the book \cite{huybrechts-lehn:moduli} for a detailed account of all the notions introduced here.
We recall that a torsion-free coherent sheaf $F$ on $X$ is {\it Gieseker-semistable},
(shortly, {\it semistable})
if for any coherent subsheaf $E$, with $\rk(E)<\rk(F)$,
one has $p(E,t)/\rk(E) \leq p(F,t)/\rk(F)$ for $t\gg 0$.
The sheaf $F$ is called {\it stable} if the inequality above is strict for all $E$ and $t\gg0$.
It is straightforward to see that a sheaf $F$ is semistable (respectively stable) if and only if
for any torsion-free quotient $Q$, with $\rk(Q)<\rk(F)$,
one has $p(Q,t)/\rk(Q) \geq p(F,t)/\rk(F)$
(respectively $p(Q,t)/\rk(Q) > p(F,t)/\rk(F)$)
for $t\gg 0$.

The {\it slope} of a sheaf $F$ of positive rank is defined as
$\mu(F) = \deg(c_1(F)\cdot H_X^{n-1})/\rk(F)$.
We recall that a torsion-free coherent sheaf $F$ is  {\it $\mu$-semistable} if for
any coherent subsheaf $E$, with $\rk(E) < \rk(F)$,
one has $\mu(E) \leq \mu(F)$.
The sheaf $F$ is called {\it $\mu$-stable} if the above inequality is strict for all $E$.
We recall that the {\it discriminant} of a sheaf $F$ is:
\begin{equation}\label{Delta}
\Delta(F) = 2 r c_2(F) - (r-1) c_1(F)^2.
\end{equation}
Bogomolov's inequality, see e.g. \cite[Theorem 3.4.1]{huybrechts-lehn:moduli},
states that if $F$ is $\mu$-semistable, then we have:
\begin{equation}
\label{eq:bogomolov}
\Delta(F)\cdot H_X^{n-2}\geq 0.
\end{equation}
Recall that by Maruyama's theorem, see
\cite{maruyama:boundedness-small}, if $\dim(X)=n\geq 2$ and $F$ is a
$\mu$-semistable sheaf of rank $r<n$, then
its restriction to a general divisor in $|\OO_X(H_X)|$ is still $\mu$-semistable.

\vspace{0.2cm}
We introduce here some notation concerning moduli spaces.
Once fixed the polarization $H_X$, we denote by $\Mo_X(r,c_1,\ldots,c_n)$ the moduli space of
$S$-equivalence classes of rank-$r$ sheaves which are $H_X$-semistable
and have Chern classes $c_1,\ldots,c_n$.
We will drop the last values of the classes $c_k$ when they are
zero.
We denote by $\Mos_X(r,c_1,\ldots,c_n)$ the subset of stable sheaves of
$\Mo_X(r,c_1,\ldots,c_n)$.
The point of $\Mos_X(r,c_1,\ldots,c_n)$ represented by a sheaf $F$ will
be denoted again by $F$, by abuse of notation.
We denote by $\sH^g_d(X)$ the union of components of the {\it Hilbert scheme} of closed
subschemes $Z$ of $X$ with Hilbert polynomial $p(\OO_Z,t)=d t+1-g$,
containing integral curves of degree $d$ and arithmetic genus $g$.

We use the following terminology:
any claim referring to a {\em general} element in a given parameter
space $P$, will mean that the claim holds for all elements of $P$,
except possibly for those who lie in a countable
union of Zariski closed subsets of $P$.

\subsubsection{Homological algebra}
As a basic tool, we will use the bounded derived category of
coherent sheaves.
Namely, given $X$ as above, we will
consider the derived category $\D(X)$ of complexes of sheaves on $X$
with bounded coherent cohomology.
For definitions and notation we refer to
\cite{gelfand-manin:homological} and \cite{weibel:homological}.
In particular we write $[j]$ for the $j$-th shift to the right in the
derived category.

Let $Z$ be a local complete intersection subvariety of
$X$.
In view of the
Fundamental Local Isomorphism (see
\cite[Proposition III.7.2]{hartshorne:residues-duality}),
we have the natural isomorphisms:
\begin{align}
\label{ext-norm}& \EExt^k_X(\OO_Z,\OO_Z) \cong
\EExt^{k-1}_X(\cI_Z,\OO_Z) \cong \bigwedge^k N_Z, \\
\label{tor-norm}& \TTor_k^X(\OO_Z,\OO_Z) \cong
\TTor_{k-1}^X(\cI_Z,\OO_Z) \cong \bigwedge^k N^*_Z.
\end{align}


For the reader's convenience we recall the following spectral
sequences:
\begin{align}
  \label{ss1}
  & E^{p,q}_2 = \Ext_X^p(\HHH^{-q}(a),A) \Longrightarrow \Ext_X^{p+q}(a,A),   \\
  \label{ss2}
  & E^{p,q}_2 = \Ext_X^p(B,\HHH^{q}(b)) \Longrightarrow \Ext_X^{p+q}(B,b), \\
\label{spectr-ext} & E^{p,q}_2 = \HH^p(X,\EExt^q_X(A,B))  \Longrightarrow
\Ext^{p+q}_X(A,B).
\end{align}
where $a,b$ are complexes of sheaves on $X$, and $A,B$ are sheaves
on $X$. Recall that the maps in the $E_{2}$ term of these spectral sequences are
differentials:
\[
d^{p,q}_2:E^{p,q}_2 \to E^{p+2,q-1}_2.
\]

\subsubsection{Brill-Noether loci for vector bundles on a smooth
  projective curve}
\label{befana}

We recall here some basic results in Brill-Noether
theory, for definitions and notations we refer for instance to
\cite{teixidor-i-bigas:brill-noether-stable}.
Let $\Gamma$ be a smooth connected complex projective curve of genus $g$.
The Brill-Noether locus
$W^{s}_{r,c} \subset \Mos_{\Gamma}(r,c)$
is defined to be the subvariety consisting of rank $r$ stable bundles of
degree $c$ on $\Gamma$ having at least $s+1$
independent global sections.
The expected dimension of this variety is:
\[
\rho(r,c,s) = r^2(g-1)-(s+1)\,(s+1-c+r(g-1))+1.
\]

Consider a stable rank $r$ vector bundle $\cF$ on $\Gamma$, with $\cF$
in $\Mos_{\Gamma}(r,c)$.
We define the Gieseker-Petri map as the natural linear application:
\begin{equation} \label{petri}
\pi_{\cF}: \HH^0(\Gamma,\cF) \ts \HH^0(\Gamma,\cF^* \ts
\omega_{\Gamma}) \to \HH^0(\Gamma,\cF\ts \cF^*\ts
\omega_{\Gamma}).
\end{equation}
The map $\pi_{\cF}$ is injective if and only if $\cF$ is a
non-singular point of a component
of $W^{s}_{r,d}$ of dimension $\rho(r,d,s)$.
We will use more frequently in the sequel the transpose of
the Petri map, that reads:
\begin{equation} \label{dual-petri}
\pi^{\top}_\cF:\Ext^1_\Gamma(\cF,\cF) \to
\HH^0(\Gamma,\cF)^* \ts \HH^1(\Gamma,\cF)
\end{equation}
In fact the tangent space to $W^{s}_{r,d}$ at the point $\cF$ can be interpreted as the kernel of
$\pi^{\top}_\cF$, while the space of obstructions at $\cF$ is identified with the cokernel of $\pi^{\top}_\cF$.

\subsubsection{Smooth prime Fano threefolds}

Let now $X$ be a smooth connected complex projective variety of dimension $3$.
Recall that $X$ is called {\it Fano} if its anticanonical divisor
class $-K_X$ is ample.
A Fano threefold  $X$ is said to be {\it prime} if its Picard group is generated by the class of
$K_X$. These varieties are classified up to deformation, see for
instance \cite[Chapter IV]{fano-encyclo}. The number of
deformation classes is $10$, and they are characterized by the
{\it genus}, which is the integer $g$ such that
$\deg(X)=-K_X^3=2\, g-2$. Recall that the genera of prime Fano
threefolds take values in $\{2,3,\ldots,9,10,12\}$.
If $-K_X$ is very ample, we say that $X$ is {\it non-hyperelliptic}. In this
case we have $g\geq 3$.

If $X$ is a prime Fano threefold of genus $g$, the Hilbert scheme
$\sH^{0}_{1}(X)$ of lines contained in $X$ is a scheme of
dimension $1$. It is known by \cite{iskovskih:II} that the normal
bundle of a line $L\subset X$ splits either as $\OO_{L} \oplus
\OO_{L}(-1)$ or as $\OO_{L}(1) \oplus \OO_{L}(-2)$. The Hilbert
scheme $\sH^{0}_{1}(X)$ contains a component which is non-reduced
at any point if and only if the normal bundle of a general line
$L$ in that component splits as $\OO_{L}(1) \oplus \OO_{L}(-2)$.
In this case, the threefold $X$ is said to be {\em exotic} (see
\cite{prokhorov:exotic}). On the other hand, we say that $X$ is
{\em ordinary} if it contains a line $L$ with normal bundle
$\OO_{L} \oplus \OO_{L}(-1)$, equivalently if $\sH^{0}_{1}(X)$ has
a generically smooth component. Recall that, if $X$ is general
enough, $\sH^0_1(X)$ is in fact a smooth irreducible curve see
\cite[Theorem 4.2.7]{fano-encyclo}, and the references therein.

Let us recall that a non-hyperelliptic prime Fano threefold $X$ is
exotic if and only if it contains infinitely many non-reduced
conics (see \cite{brambilla-faenzi:ACM}). For $g\geq 9$, the
results of \cite{gruson-laytimi-nagaraj} and
\cite{prokhorov:exotic} imply that $X$ is non-exotic unless $g=12$
and $X$ is the Mukai-Umemura threefold, see \cite{mukai-umemura}.
In fact, the only other known examples of exotic prime Fano
threefolds besides Mukai-Umemura's case are those containing a
cone. For instance if $X$ is the Fermat quartic threefold in
$\p^4$ ($g=3$), then $\sH^0_1(X)$ is a curve with $40$ irreducible
components, each of multiplicity $2$ (see \cite{tennison}). We do
not know if there exist exotic prime Fano threefolds of genus $7$.
In view of a result of Iliev-Markushevich (restated in Proposition
\ref{OL} further on), this amounts to ask whether there are
non-tetragonal smooth curves $\Gamma$ of genus $7$ admitting
infinitely many divisors $\cL$ of type $g^1_5$ such that
$K_\Gamma-2\cL$ is effective (see Remark \ref{marc}).

Remark that the cohomology groups $\HH^{k,k}(X)$ of a prime Fano
threefold $X$ of genus $g$ are generated by the
divisor class $H_X$ (for $k=1$), the class $L_X$ of a line contained
in $X$ (for $k=2$), the class $P_X$ of a closed point of $X$ (for $k=3$).
Hence we will denote the Chern classes of a sheaf on $X$ by the integral
multiple of the corresponding generator. Recall that $H_X^2=(2\,g-2)L_X$.
Given a smooth curve $C \subset X$ of degree $d$ and genus $p_a$, we have:
\begin{equation}\label{chern-retta}
c_1(\OO_C)=0, \qquad c_2(\OO_C)=-d, \qquad c_3(\OO_C)=2-2 p_a - d.
\end{equation}

Applying the theorem of Riemann-Roch to a sheaf $F$ on $X$, of
(generic) rank $r$ and with Chern classes $c_1,c_2,c_3$,
we obtain the following formulas:
\begin{align}
\label{RiemRoch}\chi(F) & = r + \frac{11+g}{6}\,c_1 +\frac{g-1}{2}\, c_1^2
-\frac{1}{2} c_2 +  \frac{g-1}{3}\,c_1^3
-\frac{1}{2}\,c_1\,c_2+\frac{1}{2}\,c_3, \\
\chi(F,F) & = r^2 - \frac{1}{2} \Delta(F),\label{doubleRiemRoch}
\intertext{and, in case $r=2$ and $g=7$, formula \eqref{RiemRoch} becomes:}
\label{RRg7}
\chi(F) & = 2 + 3\,c_1 +3\, c_1^2
-\frac{1}{2} c_2 +  2\,c_1^3
-\frac{1}{2}\,c_1\,c_2+\frac{1}{2}\,c_3.
\end{align}

Recall that if $T\neq 0$ is a torsion sheaf supported in codimension $p>0$,
then $c_k(T)=0$ for $k<p$, while $c_p(T)$ is the class of the
scheme-theoretic support
of $T$ in $\HH^{p,p}(X)$ (see e.g. \cite{fulton:intersection}).
Moreover since $\chi(T(t))$ is positive for $t \gg 0$,
looking at the dominant term of $\chi(T(t))$,
we see that $(-1)^{p-1}c_p(T)>0$.

Recall also that a smooth projective surface $S$ is a {\it K3 surface} if
it has trivial canonical bundle and irregularity zero.
A general hyperplane section of a non-hyperelliptic prime
Fano threefold of genus $g$ is a K3 surface $S$ whose Picard group is
generated by the restriction $H_S$ of $H_X$ to $S$, and whose
(sectional) genus equals $g$.
We consider stability with respect to $H_S$.
Given a stable sheaf $F$ of rank $r$ on a K3 surface $S$ with Chern classes $c_1,c_2$,
the dimension at $F$ of the moduli space $\Mo_S(r,c_1,c_2)$ is:
\begin{equation}
  \label{eq:dimension}
  \Delta(F) - 2\, (r^2-1).
\end{equation}
For this equality we refer for instance to \cite[Part II, Chapter 6]{huybrechts-lehn:moduli}.

\begin{rmk} \label{ext-rational}
Assume that $X$ is a prime Fano threefold, and let $L$ be a
line contained in $X$, with $N_L\cong \OO_L\oplus\OO_L(-1)$. Then we have:
\[
 \ext^1_X(\OO_L,\OO_L) = 1, \quad \ext^2_X(\OO_L,\OO_L) = 0.
\]
One can easily check this statement, using
\eqref{ext-norm} and \eqref{spectr-ext}.
\end{rmk}

\begin{rmk}\label{fibrato-retta}
Let $X$ be a prime Fano threefold, and $L$  a
line contained in $X$.
Then by the well-known Hartshorne-Serre correspondence
(for instance, by an adaptation of \cite[Theorem 4.1]{hartshorne:stable-reflexive} to our setup) we can associate to
$L$ a rank $2$ vector bundle $F_L$, with $c_1(F_L)=-1$ and
$c_2(F_L)=1$ (see also \cite{madonna:fano-cy}).
Moreover we have the following exact sequence:
\begin{equation} \label{eq-fibrato-retta}
0\to \OO_X\to F_L \to \cI_L(-1) \to 0.
\end{equation}
\end{rmk}

\subsection{Geometry of Fano threefolds of genus 7}

We recall here the construction of a Fano threefold of genus
$7$ as a
section of the spinor $10$-fold, outlined by Mukai in
\cite{mukai:curves-K3}, \cite{mukai:biregular}.
See also \cite{mukai:curves-symmetric},
\cite{mukai:curves-symmetric-I},
\cite{iliev-markushevich:genus-7}.

Let $V$ be a $10$-dimensional $\C$-vector space, equipped
with a
non-degenerate quadratic form. The algebraic group $\Spin(V)$
corresponds to a Dynkin diagram of type $D_{5}$.
It admits
two $16$-dimensional irreducible representations $\s^{+}$
and $\s^{-}$,
called the half-spin representations, having maximal weight
respectively $\lambda_{+}=\lambda_{4}$ and
$\lambda_{-}=\lambda_{5}$, where the $4$th and $5$th nodes of $D_{5}$
are connected only to the unique trivalent node.
These representations are naturally dual to each other.

The corresponding roots $\alpha_{+}=\alpha_{4}$ and
$\alpha_{-}=\alpha_{5}$ give rise to the Hermitian symmetric
spaces
$\Sigma^+$ and $\Sigma^{-}$, defined
by $\Sigma^{\pm} = \Spin(10)/{\sf P}(\alpha_{\pm})$, where
${\sf P}(\alpha_{+})$ and ${\sf P}(\alpha_{-})$ are the parabolic subgroups
associated respectively to $\alpha_+$ and $\alpha_-$.
These can be seen as the connected
components of the orthogonal Grassmann variety
$\G_Q(\p^4,\p(V))$ of $4$-dimensional
isotropic linear subspaces $\p^4$ contained in the smooth
quadric hypersurface $Q$
in $\p^{9} = \p(V)$ corresponding to the quadratic form on
$V$.
Note that $\OO_{\G_Q(\p^4,\p(V))}(1)|_{\Sigma_\pm}=\OO_{\Sigma_\pm}(2H)$.
We denote by $\UU_{\pm}$ the restriction of the tautological
subbundle
on $\G_Q(\p^{4},\p(V))$ to $\Sigma^{\pm}$.
We have thus the universal exact sequence:
\begin{equation}
  \label{eq:universalU+}
  0 \to \UU_\pm \to V \ts \OO_{\Sigma^\pm} \to \UU_\pm^* \to 0.
\end{equation}
Under the duality on $V$ given by $Q$, the bundle $\UU_\pm$ is
isomorphic to $\UU_\pm^\perp$.
The hyperplane divisors $H_{\Sigma^{\pm}}$ provide natural
equivariant
embeddings of $\Sigma^{\pm}$ into $\p(\s^{\pm})$.
Given a subvariety $Y\subset \Sigma^{\pm}$, we denote by
$H_Y$ the
restriction of $H_{\Sigma^{\pm}}$ to $Y$.

Now choose a $9$-dimensional vector subspace $A$ of
$\s^{+}$,
and consider its ($7$-dimensional) orthogonal space
$B = A^{\perp} \subset \s^{-}$ under the duality $(\s^{+})^*
\cong \s^{-}$.
We define:
\begin{align}
\label{def-X} X = \Sigma^{+} \cap \p(A) \subset \p(\s^{+}),
\\
\label{def-Gamma} \Gamma = \Sigma^{-} \cap \p(B) \subset
\p(\s^{-}).
\end{align}

If the subspace $A$ is general enough,
then $X$ is smooth and it turns out that it is a prime Fano threefold
of genus $7$, in particular we have $K_X=-H_X$, $H_X^3=12$.
Further, any such prime Fano threefold of genus $7$
is obtained in this way.
In turn, the curve $\Gamma$ is a smooth canonical curve of
genus $7$, called the {\em homologically projective dual curve} of
$X$.
By \cite[Table 1]{mukai:curves-symmetric-I}, we know that
the curve $\Gamma$ is not trigonal nor tetragonal and $W^2_{1,6}$ is empty.
Moreover, a general curve of genus $7$ is of this kind.

\subsubsection{Semiorthogonal decomposition of the derived category of
$X$}

Here we briefly sketch the construction due to
Kuznetsov
\cite{kuznetsov:v12}, of the semiorthogonal decomposition of
$\D(X)$.
We consider the product variety $X \times \Gamma$, together
with the two
projections $p \colon X \times \Gamma \rr X$, $q \colon X
\times \Gamma \rr \Gamma$.
The symmetric form on $V$ provides the following natural
exact sequence on
$X \times \Gamma \subset \Sigma^{+} \times \Sigma^{-}$:

\begin{equation} \label{kuz-exact}
0 \rr \EE^* \rr \UU_{-} \rr \UU_{+}^* \xr{\alpha} \EE \rr 0
\end{equation}
(here $\UU_{\pm}$ denotes also the pull-back of $\UU_{\pm}$ to
$X\times \Gamma$).
It turns out that $\EE$ is a locally free
sheaf on $X \times \Gamma$ with the following invariants:
\begin{align}
& c_1(\EE) = H_X + H_{\Gamma}, \label{dalmazia}\\
& c_2(\EE) = \frac{7}{12} \, H_X \, H_{\Gamma} + 5\,L +
\eta,
\end{align}
where $\eta$ sits in $\HH^{3}(X,\C)\ts \HH^{1}(\Gamma,\C)$ and
satisfies $\eta^2 = 14$.
In view of the results of
\cite{mukai:brill-noether},
\cite{mukai:vector-bundle-brill-noether},
\cite{kuznetsov:v12} and \cite{iliev-markushevich:genus-7},
the vector bundle $\EE$ is a universal object for moduli
functors on $X$
and $\Gamma$ in the sense specified as follows.

\begin{thm}[Mukai, Iliev-Markushevich, Kuznetsov]
\label{summary}
Let $A \subset \s^{+}$ be chosen so that $X$ defined by
\eqref{def-X} is a smooth threefold,
and define $\Gamma$ and $\EE$ as in \eqref{def-Gamma},
\eqref{kuz-exact}. Then:
\begin{enumerate}[i)]
\item \label{moduli-X} the curve $\Gamma$ is isomorphic to
$\Mo_X(2,1,5)$,
\item \label{moduli-Gamma} the manifold $X$ is isomorphic to
the Brill-Noether locus of
      stable bundles $\cE$ on $\Gamma$ with $\rk(\cE)=2$,
      $\det(\cE)\cong H_{\Gamma}$, $\hh^0(\Gamma,\cE)=5$,
\item the bundle $\EE$ universally represents both moduli
problems
  \eqref{moduli-X} and \eqref{moduli-Gamma},
\item  \label{somaro} for all $y\in \Gamma$, the sheaf
$\EE_y$ is a globally generated ACM vector bundle.
\end{enumerate}
\end{thm}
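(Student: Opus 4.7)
Being a summary attributed to three sources, the proof I envision consolidates the cited results, with verification of the numerical invariants carried out directly from the defining sequence \eqref{kuz-exact}. The strategy exploits the two-sided symmetry of the construction: since $X$ and $\Gamma$ are transverse linear sections of the two spinor components $\Sigma^{\pm}$, and \eqref{kuz-exact} interlocks the tautological bundles on both sides, the slice restrictions $\EE_y := \EE|_{X \times \{y\}}$ and $\EE_x := \EE|_{\{x\} \times \Gamma}$ should simultaneously realize both moduli problems.

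For parts \eqref{moduli-X} and \eqref{somaro}, I would first restrict \eqref{kuz-exact} to a horizontal slice $X \times \{y\}$. Writing $W_y \subset V$ for the $5$-dimensional isotropic subspace indexed by $y \in \Gamma \subset \Sigma^{-}$, so that $\UU_{-}|_{X \times \{y\}} = W_y \otimes \OO_X$, this yields the four-term exact sequence
\[
0 \to \EE_y^* \to W_y \otimes \OO_X \to \UU_+^*|_X \to \EE_y \to 0.
\]
From this I would read that $\EE_y$ is a rank $2$ sheaf with $c_1(\EE_y) = 1$ and $c_2(\EE_y) = 5$ by restricting \eqref{dalmazia} and the $c_2$-formula for $\EE$ to the slice. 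Surjectivity of the last map, built into the construction, yields global generation of $\EE_y$; the ACM property and vanishing of $\hh^1(X, \EE_y(t))$ for all $t$ follow by splicing the above into two short exact sequences and invoking Bott--Borel--Weil vanishing for $\UU_+^*|_X$ on the linear section $X \subset \Sigma^+$. Stability of $\EE_y$ and the fact that $y \mapsto \EE_y$ realizes an isomorphism $\Gamma \cong \Mo_X(2,1,5)$ is Mukai's theorem (\cite{mukai:curves-K3}), with surjectivity completed in \cite{iliev-markushevich:genus-7} and \cite{kuznetsov:v12}; injectivity follows because the space of global sections of $\EE_y$ recovers $W_y$, hence $y$.

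For part \eqref{moduli-Gamma}, the symmetric argument applied to $\{x\} \times \Gamma$ produces a rank $2$ bundle $\EE_x$ on $\Gamma$ with $\det(\EE_x) \cong H_{\Gamma}$ and $\hh^0(\Gamma, \EE_x) = 5$, computed from the analogous restricted sequence. Mukai's characterization of canonical genus-$7$ curves (\cite{mukai:curves-symmetric}, \cite{mukai:curves-symmetric-I}) then identifies $X$ with the Brill--Noether locus of such bundles on $\Gamma$, precisely under the assumption that $\Gamma$ is non-trigonal, non-tetragonal, and has $W^2_{1,6}$ empty, which is automatic for $\Gamma$ built from a generic $A$. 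Finally, part (iii), namely universality of $\EE$ as a Fourier--Mukai kernel representing both moduli functors as \emph{fine} moduli spaces, is the content of Kuznetsov's analysis of the integral functor $\D(X) \to \D(\Gamma)$ with kernel $\EE$ in \cite{kuznetsov:v12}. This is the main obstacle, as it genuinely uses the semiorthogonal decomposition and cannot be extracted from the tautological construction of $\EE$ alone; in the write-up I would cite it as a black box.
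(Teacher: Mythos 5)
The paper offers no proof of this theorem: it is stated purely as a summary of results from \cite{mukai:brill-noether}, \cite{mukai:vector-bundle-brill-noether}, \cite{iliev-markushevich:genus-7} and \cite{kuznetsov:v12}, cited in the sentence immediately preceding the statement, so there is no internal argument to compare yours against. Your consolidation is consistent with the construction and correctly apportions labour: the Chern class computation for $\EE_y$ obtained by restricting \eqref{dalmazia} and the $c_2$-formula to a slice is right, global generation does follow from the surjection $\UU_+^* \to \EE_y$, and the genuinely deep ingredients (stability, the two moduli identifications, and the universality/fineness in part (iii)) are correctly flagged as imports from the cited sources rather than consequences of the tautological sequence. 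One point to tighten if you write this up: the restricted sequence gives only an inclusion $(\UU_+^*)_x \hookrightarrow \HH^0(\Gamma,\EE_x)$, and that only after one knows $\HH^0(\Gamma,\GG_x)=0$ (which the paper proves separately in Lemma \ref{GG-stability} via Bott's theorem on $\Sigma^{-}$); the equality $\hh^0(\Gamma,\EE_x)=5$ then requires the upper bound of \cite{bertram-feinberg} on sections of rank $2$ bundles with canonical determinant, exactly as the paper invokes later in Lemma \ref{quandofiniremo}. So ``computed from the analogous restricted sequence'' understates what is needed there.
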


Given points $x \in X$, $y \in \Gamma$, and given a vector
bundle
$\FF$ on $X \times \Gamma$,
we denote by $\FF_{y}$ (resp. $\FF_x$) the
bundle over $X$ (resp. over $\Gamma$) obtained restricting
$\FF$ to
$X \times \{y\}$ (resp. to $\{x\} \times \Gamma$).
We still denote by $\UU_{+}$ (resp. $\UU_{-}$) the
restriction
of $\UU_\pm$ to $X$ (resp. to $\Gamma$).
The vector bundles $\UU_{+}$ and $\UU_{-}$ have rank $5$.
We have $c_1(\UU_-)=-2H_{\Gamma}$ and:
\begin{align*}
& c_1(\UU_+)=-2, && c_2(\UU_+)=24, &&
c_3(\UU_+)=-14.
\end{align*}

We define the following exact functors:
\begin{align}
\label{cocca}
& \ph :  \D(\Gamma) \rr \D(X), && \ph(-) = \RR p_*(q^*(-)
\ts \EE), \\
& \phx : \D(X) \rr \D(\Gamma), && \phx(-) = \RR q_*(p^*(-)
\ts \EE^* (H_{\Gamma}))[1], \\
& \phs : \D(X) \rr \D(\Gamma), && \phs(-) = \RR q_*(p^*(-)
\ts \EE^* (-H_X))[3].
\end{align}
We recall that $\ph$ is fully faithful, $\phs$ is left
adjoint to $\ph$, and $\phx$ is right adjoint to $\ph$.
The main result of \cite{kuznetsov:v12} provides the
following
semiorthogonal decomposition:
\begin{equation} \label{decomposition}
 \D(X) \cong \langle \OO_X, \UU^*_{+}, \ph(\D(\Gamma))
\rangle.
\end{equation}

This decomposition will be used to write a canonical
resolution of a
given sheaf over $X$.
In view of \cite{gorodentsev:exceptional}, given a sheaf $F$
over $X$,
the decomposition \eqref{decomposition} provides a
functorial exact triangle:
\begin{equation}\label{triangolo}
\ph(\phx(F)) \to F \to \mathbf{\Psi} (\mathbf{\Psi^*}(F)),
\end{equation}
where $\mathbf{\Psi}$ is the inclusion of the subcategory
$\langle \OO_X, \UU^*_+ \rangle$ in $\D(X)$ and
$\mathbf{\Psi^*}$ is the left
adjoint functor to $\mathbf{\Psi}$.
The $k$-th term of the complex $\mathbf{\Psi}
(\mathbf{\Psi^*}(F))$ can be
written as follows:
\begin{equation}\label{triangolo-psi}
(\mathbf{\Psi} (\mathbf{\Psi^*}(F)))^k \cong
\Ext_X^{-k}(F,\OO_X)^*\ts \OO_X\oplus
\Ext_X^{1-k}(F,\UU_+)^*\ts \UU_+^*.
\end{equation}

\begin{rmk} \label{evai}
Given a sheaf $F$ on $X$, one can describe more explicitly the map $F \to \mathbf{\Psi}
(\mathbf{\Psi^*}(F))$. We do this here, in order to show that the complex $\mathbf{\Psi}
(\mathbf{\Psi^*}(F))$ is {\em minimal}, i.e. the only non-zero maps in
the complex are from copies of $\OO_X$ to
copies of $\UU_+^*$.
In other words, for any $k$,
the differential $d^k$ from the $(k-1)$-st term to the $k$-th term
is strictly triangular.

We consider the product $X \times X$ and the projections
 $q_{1}$ and $q_{2}$ onto the two factors.
Let $\Delta : X \to X \times X$ be the diagonal embedding.
We denote by $\mathbf{K}$ the following complex on $X\times X$:
$$\UU_+ \boxtimes\UU_+ \to \OO_X \boxtimes \OO_X \to \Delta_* (\OO_X)$$
obtained restricting the standard resolution of the diagonal
on the Grassmannian, see \cite{kapranov:derived}.
Now denote by $\mathbf{U}$ the complex $\OO_X\boxtimes\OO_X [3]
\to \UU_+^* \boxtimes \UU_+^*[3]$.
Since $(\Delta_* (\OO_X))^*\cong \Delta_* (\OO_X(1))[-3]$,  dualizing
$\mathbf{K}$ and shifting by $3$, we get that
$\mathbf{K}^*[3]$ is
quasi-isomorphic to $\Delta_*(\OO_X(1)) \to \mathbf{U}$.
This quasi-isomorphism can be
rewritten as a distinguished triangle
$$ \mathbf{K}^*(-1,0)[3] \to \Delta_*(\OO_X) \to \mathbf{U}(-1,0).$$
Then, given a sheaf $F$ on $X$, the complex $\mathbf{\Psi}
(\mathbf{\Psi^*}(F))$ is given by $\RR q_{2*} (q_{1}^{*}(F(-1))
\ts \mathbf{U})$, and the map $F\to \mathbf{\Psi}
(\mathbf{\Psi^*}(F))$ is induced by
$\Delta_*(\OO_X) \to \mathbf{U}(-1,0)$. Finally, we recall the
vanishing $\Ext^k_X(\UU_+^*,\OO_X)=\Ext^k_X(\OO_X,\UU_+^*)=0$ for
any $k>0$.

Having this in mind, one can easily
prove the minimality statement,
indeed \cite[Lemma 1.6]{kapranov:derived} applies,
and we can use \cite[Lemma 3.2]{ancona-ottaviani:peloritani}
to deduce that the differentials between the graded pieces of
$\RR q_{2*} (q_{1}^{*}(F(-1)) \ts \UU_+^* \boxtimes \UU_+^*)$ are zero,
as well as the differentials between the graded pieces of
$\RR q_{2*} (q_{1}^{*}(F(-1)))$.
\end{rmk}

\begin{rmk} \label{chestrano}
Given an object $F$ of $\D(X)$, we have an exact triangle:
\[
F \to \mathbf{\Psi} (\mathbf{\Psi^*}(F)) \to \ph(\phx(F))[1],
\]
so we may think of $\ph(\phx(-))[1]$ as the {\it right mutation functor}
$R_{\langle\OO_X,\, \UU_+^*\rangle}$ with respect to the subcategory $\langle \OO_X, \UU_+^* \rangle$ of
$\D(X)$, see \cite{gorodentsev:exceptional}.
\end{rmk}

\subsubsection{Some lemmas on universal bundles}

We close this section with some lemmas regarding the image
via the integral functors
defined above of some natural sheaves on $X$ and $\Gamma$.
These results will be needed further on.
From the sequence \eqref{kuz-exact} we obtain:
\begin{align}
\label{GGdual} 0 \rr \EE^* \rr  \UU_{-} \rr \GG \rr 0,\\
\label{GG} 0 \rr \GG \rr  \UU_{+}^* \rr \EE \rr 0,
\end{align}
where $\GG$ is a rank $3$ vector bundle with
$c_{1}(\GG) = H_{X} - H_{\Gamma}$.

\begin{lem} \label{GG-stability}
The vector bundles $\UU_{+}$  and $\GG_y$, for any $y\in
\Gamma$, are stable and ACM.
Moreover, we have $\HH^0(\Gamma,\GG_x)=0$  for any $x\in X$.
\end{lem}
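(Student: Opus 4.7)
The plan is to treat $\UU_+$ via its description as the restriction of a homogeneous bundle on the spinor tenfold, and to reduce the assertions on $\GG_y$ and $\GG_x$ to cohomological properties of $\EE_y$ and $\EE_x$ guaranteed by Theorem \ref{summary}, using the two exact sequences \eqref{GGdual} and \eqref{GG} restricted to the appropriate fiber.

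For $\UU_+$, recall that on $\Sigma^+$ the bundle $\UU_+$ is the tautological subbundle, an irreducible $\Spin(10)$-homogeneous bundle; in particular it is $\mu$-stable on $\Sigma^+$ and, by Bott's theorem, has no intermediate cohomology for any twist. Since $X = \Sigma^+ \cap \p(A)$ is a complete intersection of seven hyperplanes in $\Sigma^+$, the Koszul resolution of $\OO_X$ over $\OO_{\Sigma^+}$ transports the ACM property to $X$. The $\mu$-stability of $\UU_+|_X$ then follows from Hoppe's criterion applied to $\wedge^p \UU_+$ for $p = 1,\dots,4$: for each such $p$ the vanishing $\HH^0(X, \wedge^p \UU_+(-m)) = 0$ above the slope bound comes from the same Koszul computation, starting from the corresponding Bott-type vanishing on $\Sigma^+$.

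For $\GG_y$, restricting \eqref{GGdual} to $X \times \{y\}$ yields
\[
0 \to \EE_y^* \to \OO_X^{\oplus 5} \to \GG_y \to 0,
\]
the middle term being trivial because $\UU_-$ is pulled back from $\Gamma$. By Theorem \ref{summary}(\ref{somaro}) the bundle $\EE_y$ is ACM, and since $\det \EE_y = \OO_X(1)$ we have $\EE_y^* \cong \EE_y(-1)$; together with the ACM property of $\OO_X$, the long exact sequence in cohomology gives $\HH^i(X, \GG_y(t)) = 0$ for $i=1,2$ and all $t$. Since $\GG_y$ has rank $3$ and $c_1=1$, Hoppe's criterion reduces $\mu$-stability to the vanishings $\HH^0(X, \GG_y(-1)) = \HH^0(X, \GG_y^*) = 0$ (using $\wedge^2 \GG_y(-1) \cong \GG_y^*$). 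The first is immediate from the same long exact sequence; for the second, dualize to
\[
0 \to \GG_y^* \to \OO_X^{\oplus 5} \to \EE_y \to 0
\]
and observe that the induced map $\C^5 \to \HH^0(X, \EE_y)$ is an isomorphism, since both sides have dimension $5$ by Theorem \ref{summary} and the map is surjective, being the restriction of sections under the universal evaluation $\OO_X^{\oplus 5} \to \EE_y$.

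The vanishing $\HH^0(\Gamma, \GG_x)=0$ follows by exactly the same device, restricting instead \eqref{GG} to $\{x\} \times \Gamma$:
\[
0 \to \GG_x \to \OO_\Gamma^{\oplus 5} \to \EE_x \to 0,
\]
and invoking Theorem \ref{summary}(\ref{moduli-Gamma}), which gives $\hh^0(\Gamma, \EE_x)=5$; the induced map $\C^5 \to \HH^0(\Gamma, \EE_x)$ is then an isomorphism for the same reason. The main technical point is the identification, in these last two steps, of the natural map $\C^5 \to \HH^0(\EE_y)$ (respectively $\C^5 \to \HH^0(\EE_x)$) with the evaluation coming from the universal family; once this is granted, the rest is bookkeeping with the sequences \eqref{kuz-exact}, \eqref{GGdual}, \eqref{GG}.
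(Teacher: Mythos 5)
Your treatment of $\UU_+$ (Koszul resolution of $\OO_X$ over $\Sigma^+$, Bott vanishing, Hoppe's criterion) is essentially the paper's argument and is fine. The two reductions to $\EE_y$ and $\EE_x$, however, each contain a gap. For the ACM property of $\GG_y$: from $0\to\EE_y^*\to\OO_X^{\oplus5}\to\GG_y\to0$ you do get $\HH^1(X,\GG_y(t))=0$, since that group sits between $\HH^1(X,\OO_X(t))^{\oplus 5}=0$ and $\HH^2(X,\EE_y^*(t))=0$; but for $i=2$ the long exact sequence only yields an injection $\HH^2(X,\GG_y(t))\hookrightarrow \HH^3(X,\EE_y^*(t))$, and the latter is nonzero for many $t$ (it is dual to $\HH^0(X,\EE_y(-t))$ up to the canonical twist). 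Indeed $\HH^2(X,\GG_y(t))$ is dual to the cokernel of $\HH^0(X,\OO_X(s))^{\oplus 5}\to\HH^0(X,\EE_y(s))$ for $s=-t-1$, which at $s=0$ is precisely the surjectivity you are trying to establish elsewhere. You must also use the second sequence \eqref{GG} restricted to $X\times\{y\}$, which pinches $\HH^2(X,\GG_y(t))$ between $\HH^1(X,\EE_y(t))=0$ and $\HH^2(X,\UU_+^*(t))=0$, using the ACM property of $\UU_+$ already proved; this is how the paper argues, with both \eqref{GGdual} and \eqref{GG}.

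The more serious gap is the surjectivity of $\C^5\to\HH^0(X,\EE_y)$ and of $\C^5\to\HH^0(\Gamma,\EE_x)$, which you assert "being the restriction of sections under the universal evaluation" and explicitly flag as the main technical point without proving it. A sheaf surjection $\OO^{\oplus5}\twoheadrightarrow\EE$ need not be surjective on global sections, and the identification of the fibre $(\UU_+^*)_x$ with $\HH^0(\Gamma,\EE_x)$ is exactly what is at stake: its injectivity \emph{is} the vanishing $\HH^0(\Gamma,\GG_x)=0$. In the paper this identification is proved in Lemma \ref{quandofiniremo} as a \emph{consequence} of the present lemma (injectivity from $\HH^0(\Gamma,\GG_x)=0$, surjectivity from the Brill--Noether bound $\hh^0(\Gamma,\EE_x)\le 5$), so your argument is circular as written. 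For $\GG_y^*$ the gap can be repaired once ACM-ness of $\GG_y$ is in place (then $\HH^1(X,\GG_y^*)=0$ forces surjectivity, hence bijectivity by dimension count), or more directly as in the paper from $\HH^0(X,\UU_+)=0$ and $\HH^1(X,\EE_y^*)=0$ applied to the dual of \eqref{GG}. But for $\HH^0(\Gamma,\GG_x)=0$ no such rescue is available on the curve, and some genuine cohomological input is required: the paper embeds $\GG_x\cong\wedge^2\GG_x^*(-1)$ into $\wedge^2\UU_-^*(-1)$ (dualizing \eqref{GGdual} and restricting to $\{x\}\times\Gamma$, where the middle term is the nontrivial bundle $\UU_-^*$, not $\OO_\Gamma^{\oplus5}$) and shows $\HH^0(\Gamma,\wedge^2\UU_-^*(-1))=0$ by Bott's theorem and the Koszul complex on $\Sigma^-$. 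Formal bookkeeping with the exact sequences does not suffice here.
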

\begin{proof}
Let us prove first that
$\HH^0(\Gamma,\GG_x)=0$  for any $x\in X$.
Notice that $\GG_x\cong\wedge^2\GG^*_x(-1)$, because $\GG_x$
has rank $3$ and
$c_1(\GG_x)\cong -H_\Gamma$.
Let us dualize \eqref{GGdual} and restrict it to
$\{x\}\times \Gamma$.
We obtain an inclusion:
$$\wedge^2\GG_x^*(-1) \mono \wedge^2\UU^*_-(-1).$$
Then we have
$\HH^0(\Gamma,\GG_x)
\subset\HH^0(\Gamma,\wedge^2\UU^*_-(-1))$.
So it suffices to show that the latter space is $0$.
To prove this, one can tensor by $\wedge^2 \UU^*_-(-1)$
the Koszul complex:
\[
0 \to \wedge^{9} A \ts \OO_{\Sigma_{-}}(-9) \to \cdots \to A
\ts
\OO_{\Sigma_{-}}(-1) \to \OO_{\Sigma_{-}} \to \OO_{\Gamma}
\to 0,
\]
and the conclusion follows applying Borel-Bott-Weil theorem
(see e.g.\ \cite{weyman:tract})
on $\Sigma_{-}$ to the
homogeneous vector bundles $\wedge^2 \UU^*_-(-t)$, for
$t=1,\ldots,10$.

Let us now turn to $\UU_+$. Consider the
Koszul complex:
\[
0 \to \wedge^{7} B \ts \OO_{\Sigma_{+}}(-7) \to \cdots \to B
\ts
\OO_{\Sigma_{+}}(-1) \to \OO_{\Sigma_{+}} \to \OO_{X} \to 0,
\]
and tensor it with $\UU_+$.
Applying Borel-Bott-Weil theorem on $\Sigma_{+}$
we obtain that, for any $t$, the
homogeneous vector bundles $\UU_+(t)$ on $\Sigma_+$
have natural cohomology and more precisely we get:
\[
 \HH^k(\Sigma_+,\UU_+(-t))=0, \qquad \mbox{for} \qquad
 \left\{
 \begin{array}{l}
   \mbox{all $k$ and $t=0,\ldots,7$,} \\
   \mbox{$k \neq 0$ and $t<0$,} \\
   \mbox{$k \neq 10$ and $t>7$.}
 \end{array}
 \right.
\]
Then it easily follows that $\UU_+$ is an ACM bundle on $X$
and
\[
 \HH^k(X,\UU_+)=0, \quad \mbox{for all $k$.}
\]
Applying the same argument to $\wedge^2 \UU_+$,
we obtain the following:
\[
 \HH^k(X,\wedge^2 \UU_+)=0, \quad \mbox{for $k\neq 1$, and}
\quad \hh^1(X,\wedge^2 \UU_+)=1.
\]
In particular, Serre duality implies:
\begin{equation}\label{formuletta}
\HH^0(X,\wedge^4 \UU_+(1)) =0, \qquad \HH^0(X,\wedge^3
\UU_+(1)) = 0.
\end{equation}
By Hoppe's criterion, see \cite[Theorem 1.2]{ancona-ottaviani:special} and
\cite[Lemma 2.6]{hoppe:rang-4},
this proves stability of $\UU_+$.

Recall that the dual of an ACM vector bundle is also ACM.
Therefore, the dual bundles of $\UU_+$ and $\EE_y$ are ACM
by \eqref{somaro} of Theorem \ref{summary}.
This easily implies, by \eqref{GGdual} and \eqref{GG}, that the bundle
$\GG_y$ is ACM.
To prove that $\GG_y$ is stable, by Hoppe's criterion it is
enough to show that
the groups $\HH^0(X,\GG_y^*)$, $\HH^0(X,\GG_y(-1))$ both
vanish.
We consider the restriction to
$X \times \{y\}$ of \eqref{GG}.
Since $\UU_+^*(-1)\cong \wedge^4 \UU_+(1)$,
we obtain the latter vanishing by \eqref{formuletta}.
Dualizing the same exact sequence
and using $\HH^{1}(X,\EE^{*}_{y})=0$ (recall that $\EE^{*}_{y}$ is ACM)
we get the former.
\end{proof}

\begin{lem}
\label{grottendic}
Given an object $\cF$ in $\D(\Gamma)$ and an object $F$ in $\D(X)$ we
have the
following functorial isomorphisms:
\begin{align}
\label{gro-X}
& \RR \HHom_X(\ph(\cF),\OO_X) \cong
\ph(\RR\HHom_\Gamma(\cF,\OO_\Gamma))\otimes \OO_X(-1) [1],
\\
\label{gro-gamma}&
\RR \HHom_\Gamma(\phx(F),\OO_\Gamma) \cong
\phx(\RR\HHom_X(F,\OO_X)) \otimes \omega_\Gamma^* [1].
\end{align}
\end{lem}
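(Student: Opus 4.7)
The plan is to derive both isomorphisms as direct consequences of Grothendieck--Serre duality applied to the two smooth proper projections $p : X \times \Gamma \to X$ and $q : X \times \Gamma \to \Gamma$, combined with three identifications that take care of all twists: (a)~$\EE^{*} \cong \EE \ts \OO(-H_{X}-H_{\Gamma})$, which follows from $\EE$ being locally free of rank $2$ (as one reads off from \eqref{GGdual}--\eqref{GG}, since $\GG$ has rank $3$ and $\UU_{\pm}$ have rank $5$) together with $c_{1}(\EE) = H_{X} + H_{\Gamma}$ from \eqref{dalmazia}; (b)~$\omega_{X} \cong \OO_{X}(-H_{X})$, because $X$ is a prime Fano threefold of index one; and (c)~$\omega_{\Gamma} \cong \OO_{\Gamma}(H_{\Gamma})$, because $\Gamma$ is a canonical curve.

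For \eqref{gro-X} I would apply Grothendieck duality along $p$, smooth proper of relative dimension one with relative dualizing complex $q^{*}\omega_{\Gamma}[1]$, obtaining
\[
\RR\HHom_{X}(\ph(\cF),\OO_{X}) \;\cong\; \RR p_{*}\, \RR\HHom_{X\times\Gamma}(q^{*}\cF \ts \EE,\; q^{*}\omega_{\Gamma})[1].
\]
Since $\EE$ is locally free, $\EE^{*}$ can be extracted from the inner $\RR\HHom$; flat base change along $q$ and the projection formula reduce the inner hom to $q^{*}\RR\HHom_{\Gamma}(\cF,\OO_{\Gamma}) \ts \EE^{*} \ts q^{*}\omega_{\Gamma}$. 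By (a) and (c) the combination $\EE^{*}\ts q^{*}\omega_{\Gamma}$ collapses to $\EE(-H_{X})$, and the right-hand side is then $\RR p_{*}(q^{*}\RR\HHom_{\Gamma}(\cF,\OO_{\Gamma}) \ts \EE)(-H_{X})[1]$, which equals $\ph(\RR\HHom_{\Gamma}(\cF,\OO_{\Gamma})) \ts \OO_{X}(-1)[1]$.

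For \eqref{gro-gamma} the same scheme works along $q$, which is smooth proper of relative dimension three with relative dualizing complex $p^{*}\omega_{X}[3]$. Taking into account the shift $[1]$ in the definition of $\phx$, Grothendieck duality gives
\[
\RR\HHom_{\Gamma}(\phx(F),\OO_{\Gamma}) \;\cong\; \RR q_{*}\bigl(p^{*}\RR\HHom_{X}(F,\omega_{X}) \ts \EE(-H_{\Gamma})\bigr)[2].
\]
Applying (a) to rewrite $\EE(-H_{\Gamma})$ as $\EE^{*}(H_{X})$, and then (b) to cancel the twist coming from $\omega_{X}$, reduces the right-hand side to $\RR q_{*}(p^{*}\RR\HHom_{X}(F,\OO_{X}) \ts \EE^{*})[2]$. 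Finally, factoring out $q^{*}\omega_{\Gamma}^{*}$ using (c) packages this as $\phx(\RR\HHom_{X}(F,\OO_{X})) \ts \omega_{\Gamma}^{*}[1]$, as claimed.

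The only real difficulty is keeping the twists and shifts consistent; no input deeper than Grothendieck duality for smooth projections is needed. The shape of the target formulas, together with the numerical identifications (a)--(c), pins down all intermediate twists, so the computation is essentially forced once one commits to applying duality along $p$ and $q$ respectively.
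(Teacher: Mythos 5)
Your proof is correct and follows essentially the same route as the paper: Grothendieck duality along the two projections $p$ and $q$ with relative dualizing complexes $q^{*}\omega_{\Gamma}[1]$ and $p^{*}\omega_{X}[3]$, combined with $\EE^{*}\cong\EE(-H_{X}-H_{\Gamma})$ (rank $2$, $c_{1}(\EE)=H_{X}+H_{\Gamma}$), $\omega_{X}\cong\OO_{X}(-1)$ and $\omega_{\Gamma}\cong\OO_{\Gamma}(H_{\Gamma})$. The paper condenses this to the single identification $\EE^{*}\ts\omega_{\Gamma}\cong\EE\ts\OO_{X}(-1)$ and plugs the appropriate kernels into the two duality formulas, but the bookkeeping of twists and shifts is identical to yours.
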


\begin{proof}
By Grothendieck duality, (see
\cite[Chapter III]{hartshorne:residues-duality}, or
\cite{conrad:grothendieck-duality}),
given a complex $\sK$ on $X \times  \Gamma$, we have:
   \begin{align}
     \label{eq:duality-X}
     &   \RR\HHom_{X}(\RR p_{*}(\sK),\OO_{X}) \cong
     \RR p_{*}  (\omega_\Gamma \ts \RR\HHom_{X\times
       \Gamma}(\sK, \OO_{X \times \Gamma}))[1],\\
     \label{eq:duality-gamma}
     &     \RR\HHom_{\Gamma}(\RR q_{*}(\sK),\OO_{\Gamma}) \cong
     \RR q_{*} (\omega_{X} \ts \RR\HHom_{X\times
       \Gamma}(\sK, \OO_{X \times \Gamma}))[3],
   \end{align}
and the isomorphisms are functorial.
Recall that $\omega_X\cong \OO_X(-1)$ and $\omega_\Gamma\cong\OO_\Gamma(H_\Gamma)$.
So by \eqref{dalmazia} we have $\EE^*\ts \omega_\Gamma\cong \EE\ts\OO_X(-1)$.
Then, setting $\sK = q^{*}(\cF) \ts \EE$ in \eqref{eq:duality-X},
we get \eqref{gro-X}.
Setting $\sK = p^{*}(F) \ts \EE^* \ts \omega_\Gamma[1]$ in
\eqref{eq:duality-gamma}, we obtain \eqref{gro-gamma}.
\end{proof}

\begin{rmk}\label{rango-grado}
Given an object $F$ in $\D(X)$, whose class in the Grothendieck group
has rank $r$ and Chern classes $c_1,c_2,c_3$, let us
assume that $\phx(F)$ is concentrated in degree $p$,
i.e. $\HHH^k(\phx(F))=0$ for $k\neq p$, so that $\phx(F)[p]$ is a sheaf.
Then the rank of $\phx(F)[p]$ is the dimension of the vector space $\HHH^p(\phx(\f))_y$,
for 
a general point $y\in\Gamma$, which
coincides with $(-1)^{p+1} \chi(\EE_y ,F)$, by definition of $\phx$.
Note that, if this rank does not depend
on the point $y \in \Gamma$, then the sheaf $\phx(\f)$ is locally
free over $\Gamma$.
Moreover applying Riemann-Roch formula we can conclude that:
\begin{equation}\label{rango}
\rk(\phx(F)[p]) = (-1)^{p}\left(-c_1+c_1c_2-4c_1^3-c_3\right).
\end{equation}
On the other hand, Grothendieck-Riemann-Roch formula gives:
\begin{equation}\label{grado}
\deg(\phx(F)[p]) =(-1)^p\left( -6c_1+6c_1^2-c_2-24c_1^2+6c_1c_2-6c_3\right).
\end{equation}
\end{rmk}

\begin{lem}\label{quandofiniremo}
The following relations hold on $\Gamma$, for each point $y
\in \Gamma$:
\begin{align}\label{fistar}
& \phs(\OO_X) \cong \UU_-,  && \phs(\UU^*_{+}) \cong
\OO_{\Gamma}, &&\phs(\EE_y) \cong \OO_y,  \\
& \phx(\OO_X) = 0, &&  \phx(\UU_{+}^{*}) =0, \quad &&
\phx(\EE_y)\cong \OO_y,\label{fiics}
\end{align}
and on $X$:
\begin{align}
\label{phO} & \HHH^0(\ph(\OO_{\Gamma}))\cong\UU^*_{+}, &&
\quad \HHH^1(\ph(\OO_\Gamma)) \cong \UU_{+}(1), \\
\nonumber & \HHH^k(\ph(\OO_\Gamma))=0, &&\quad\mbox{for $k\neq0,1$}, \\
\label{phOy}& \ph(\OO_y) \cong \EE_y.&
\end{align}
\end{lem}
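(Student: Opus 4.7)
The plan is to verify each identity in turn, using base change for integral functors, the fully-faithfulness of $\ph$, the semiorthogonal decomposition \eqref{decomposition}, and the exact sequences \eqref{GGdual}--\eqref{kuz-exact} together with Lemma \ref{GG-stability}. First, base change gives $\ph(\OO_y) = \RR p_*(q^*\OO_y \otimes \EE) = \EE_y$, which is \eqref{phOy}. Since $\ph$ is fully faithful, both $\phs\circ\ph$ and $\phx\circ\ph$ are isomorphic to the identity on $\D(\Gamma)$, so $\phs(\EE_y) = \phx(\EE_y) = \OO_y$ immediately. The vanishings $\phx(\OO_X) = 0$ and $\phx(\UU_+^*) = 0$ follow formally from \eqref{decomposition}: for $G \in \{\OO_X, \UU_+^*\}$ and any $\cF \in \D(\Gamma)$, right adjunction and semiorthogonality give $\Hom_\Gamma(\cF, \phx(G)[k]) = \Hom_X(\ph(\cF), G[k]) = 0$, so taking $\cF = \phx(G)$ forces $\phx(G) = 0$.

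To prove $\phs(\OO_X) = \UU_-$, I tensor \eqref{GGdual} by $p^*\OO_X(-H_X)$ and apply $\RR q_*[3]$. The middle piece $\RR q_*(q^*\UU_-(-H_X))[3]$ equals $\UU_-$ by the projection formula, since $\HH^i(X,\OO_X(-1)) = 0$ for $i \neq 3$ and $\HH^3(X,\OO_X(-1)) = \C$ by Serre duality. The cokernel piece $\RR q_*(\GG(-H_X))[3]$ vanishes because for every $y \in \Gamma$ the fiberwise cohomology $\HH^*(X, \GG_y(-1))$ is zero: Lemma \ref{GG-stability} kills $\HH^1$ and $\HH^2$ via the ACM property, while the slope stability of $\GG_y$ (slope $1/3$) and of $\GG_y^*$ (slope $-1/3$) together with Serre duality kill $\HH^0$ and $\HH^3$. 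For $\phs(\UU_+^*) = \OO_\Gamma$, the same strategy applied with $p^*\UU_+^*(-H_X)$ yields $\phs(\UU_+^*) \cong \RR q_*(p^*\UU_+^* \otimes \GG(-H_X))[2]$, because the middle piece vanishes: $\UU_+^*$ is ACM, $\HH^0(X, \UU_+^*(-1)) = 0$ by stability, and $\HH^3(X, \UU_+^*(-1)) = \HH^0(X, \UU_+)^* = 0$ by the Bott--Koszul vanishing in the proof of Lemma \ref{GG-stability}. A further tensoring with \eqref{GG} and Bott's theorem on $\Sigma^+$, using $\wedge^2\UU_+^*(-1) \cong \wedge^3\UU_+(1)$, then leaves exactly one rank-$1$ fiberwise contribution in the correct degree, producing $\OO_\Gamma$.

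For \eqref{phO}, the complex $\ph(\OO_\Gamma) = \RR p_*\EE$ has cohomology only in degrees $0$ and $1$ since $\dim\Gamma = 1$. Each $\EE_x$ is a stable rank-$2$ bundle on $\Gamma$ with $\det\EE_x \cong \OO_\Gamma(H_\Gamma)$ and $\hh^0(\Gamma,\EE_x) = 5$ (Theorem \ref{summary}); the self-duality $\EE_x^* \otimes \OO_\Gamma(H_\Gamma) \cong \EE_x$ combined with Serre duality on $\Gamma$ gives $\hh^1(\Gamma,\EE_x) = 5$, so both derived pushforwards are locally free of rank $5$. Applying $\RR p_*$ to \eqref{GG} and using $\HH^0(\Gamma,\GG_x) = 0$ (Lemma \ref{GG-stability}) produces an injection $\UU_+^* \mono R^0p_*\EE$ between rank-$5$ bundles that is an isomorphism on each fiber, so $R^0p_*\EE \cong \UU_+^*$. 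Relative Serre duality on $p$ then gives $R^1p_*\EE \cong (R^0p_*(\EE^* \otimes q^*\omega_\Gamma))^*$; using $c_1(\EE) = H_X + H_\Gamma$ and $\omega_\Gamma \cong \OO_\Gamma(H_\Gamma)$ one checks $\EE^* \otimes q^*\omega_\Gamma \cong \EE\otimes p^*\OO_X(-H_X)$, so the projection formula yields $R^1p_*\EE \cong (\UU_+^*(-1))^* = \UU_+(1)$. The main obstacle will be the fiberwise Bott--Koszul analysis required to pin down $\phs(\UU_+^*)$: one must carefully tally the contributions from the Koszul resolution of $X \subset \Sigma^+$ twisted by the homogeneous bundles appearing in $\UU_+^* \otimes \GG(-H_X)$ and verify that exactly one non-zero cohomology class survives, in precisely the degree that produces $\OO_\Gamma$ after the shift by $2$.
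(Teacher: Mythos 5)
Your proposal follows the paper's route for almost every identity: \eqref{phOy} from the definition, $\phs(\EE_y)\cong\phx(\EE_y)\cong\OO_y$ from full faithfulness, $\phx(\OO_X)=\phx(\UU_+^*)=0$ from semiorthogonality (which you spell out where the paper calls it clear), $\phs(\OO_X)\cong\UU_-$ by twisting \eqref{GGdual} by $-H_X$ and using the ACM/stability vanishings of Lemma \ref{GG-stability}, and $\HHH^0(\ph(\OO_\Gamma))\cong\UU_+^*$ via the injection coming from \eqref{GG} together with $\hh^0(\Gamma,\EE_x)=5$ (the paper invokes the Bertram--Feinberg bound here; you invoke Theorem \ref{summary}, which is equally valid, and your Grauert-type constant-rank argument is if anything more careful). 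Your derivation of $\HHH^1(\ph(\OO_\Gamma))\cong\UU_+(1)$ by relative Serre duality along $p$ is a direct unwinding of the paper's appeal to \eqref{gro-X} from Lemma \ref{grottendic}; same content, different packaging.

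The one genuine gap is $\phs(\UU^*_+)\cong\OO_\Gamma$. The paper does not prove this at all: it cites Kuznetsov's Lemma 5.6 of \cite{kuznetsov:v12}. You instead reduce, correctly, to computing $\RR q_*(p^*\UU_+^*\ts\GG(-H_X))[2]$, but the decisive step --- the fiberwise computation of $\HH^\bullet(X,\UU_+^*\ts\GG_y(-1))$ showing that exactly a one-dimensional contribution survives in degree $2$ --- is only asserted, and you acknowledge as much. Note also that this involves $\EE_y$, which is not a homogeneous bundle on $\Sigma^+$, so Bott--Koszul alone will not finish it; and even once you know the answer is a line bundle of degree $0$, you still must identify it as $\OO_\Gamma$ rather than another degree-zero line bundle. (That last point can be repaired by adjunction: $\Hom_\Gamma(\phs(\UU_+^*),\OO_\Gamma)\cong\Hom_X(\UU_+^*,\ph(\OO_\Gamma))$ contains the identity of $\UU_+^*=\HHH^0(\ph(\OO_\Gamma))$, so the line bundle admits a nonzero map to $\OO_\Gamma$ and, having degree zero, is trivial.) As written, though, this identity is not established by your argument; either complete the fiberwise computation or, as the paper does, cite Kuznetsov.
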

\begin{proof}
The isomorphism \eqref{phOy} follows immediately from the
definition of $\ph$.
Since the functor $\ph$ is fully faithful we easily obtain also the
relations
$\phs(\EE_y) \cong \phx(\EE_y) \cong \OO_y$.
It is clear that $\phx(\OO_{X})=\phx(\UU_{+}^{*})=0$.

The isomorphism $\phs(\UU^*_{+}) \cong \OO_{\Gamma}$ is
proved in \cite[Lemma 5.6]{kuznetsov:v12}.
Twisting \eqref{GGdual} by $\OO_{X\times \Gamma}(-H_X)$ and
taking $\RR q_*$, we get $\phs(\OO_X) \cong \UU_-$.
Indeed, we have $\HH^k(X,\GG_y(-H_X))=0$ for any integer $k$,
since the vanishing for $k=1,2$ follows from the fact that $\GG_y$ is ACM
(by Lemma \ref{GG-stability}), and the vanishing for $k=0,3$ follows
from the fact that $\GG_y$ is stable (again by Lemma \ref{GG-stability}).

Given $x\in X$, we restrict \eqref{GG} to $\{x\}\times
\Gamma$ and
taking global sections we get $(\UU^*_{+})_x \subset
\HH^0(\Gamma,\EE_x)$, by Lemma \ref{GG-stability}. Notice
that
$\dim (\UU^*_{+})_x=5$ and by Brill-Noether theory we know
that the
bundle $\EE_x$ cannot have more than $5$ sections, see
\cite{bertram-feinberg}.
Hence $\UU^*_{+,x}\cong \HH^0(\Gamma,\EE_x)$ for all $x\in
X$.
We obtain the isomorphism
$\HHH^0(\ph(\OO_{\Gamma}))\cong\UU^*_{+}$.
By \eqref{gro-X} we get:
$$\HHH^1(\ph(\OO_\Gamma))^*\cong \HHH^0(\ph(\OO_\Gamma)) \ts
\OO_{X} (-1).$$

This gives the isomorphism $\HHH^1(\ph(\OO_\Gamma)) \cong
\UU_{+}(1)$.
\end{proof}

The following corollary of Lemma \eqref{grottendic} has been pointed
out by the referee. We set $\tau$ for the functor $\cF \mapsto
\RR\HHom_\Gamma(\cF,\omega_\Gamma)$ defined on $\D(\Gamma)$.

\begin{corol} \label{coin}
Set $T$ for the functor $F \mapsto \ph(\phx(\RR\HHom_X(F,\OO_X)))[1]$.
Then $T$ is an autoequivalence of $^\perp\langle\OO_X,\UU^*_+\rangle$.
Moreover, we have $\phx \circ T = \tau \circ \phx$.
\end{corol}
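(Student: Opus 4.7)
The plan is to identify $T$ with the composition $\ph \circ \tau \circ \phx$, at which point both claims become formal consequences of the semiorthogonal decomposition and of Grothendieck--Serre duality on the curve $\Gamma$.

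First, I would use formula \eqref{gro-gamma} of Lemma \ref{grottendic} to rewrite the inner part of $T$. Tensoring the isomorphism
\[
\RR\HHom_\Gamma(\phx(F),\OO_\Gamma) \cong \phx(\RR\HHom_X(F,\OO_X))\otimes \omega_\Gamma^*[1]
\]
with $\omega_\Gamma$ gives
\[
\tau(\phx(F)) = \RR\HHom_\Gamma(\phx(F),\omega_\Gamma) \cong \phx(\RR\HHom_X(F,\OO_X))[1].
\]
Applying the triangulated functor $\ph$ to both sides yields a functorial isomorphism
\[
T(F) \cong \ph(\tau(\phx(F))).
\]

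Next, I would recall from the semiorthogonal decomposition \eqref{decomposition} that $\ph$ realises an equivalence $\D(\Gamma) \xrightarrow{\sim} {^\perp\langle\OO_X,\UU^*_+\rangle}$, and that its right adjoint $\phx$ provides a quasi-inverse when restricted to the subcategory $^\perp\langle\OO_X,\UU^*_+\rangle$. Concretely, $\phx \circ \ph \cong \mathrm{id}_{\D(\Gamma)}$ by full faithfulness of $\ph$, and the triangle \eqref{triangolo} shows $\ph(\phx(F)) \cong F$ whenever $\mathbf{\Psi^*}(F)=0$, i.e.\ whenever $F \in {^\perp\langle\OO_X,\UU^*_+\rangle}$. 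On the other hand, since $\Gamma$ is a smooth projective curve, the functor $\tau = \RR\HHom_\Gamma(-,\omega_\Gamma)$ is an (anti)\,autoequivalence of $\D(\Gamma)$ satisfying $\tau\circ\tau \cong \mathrm{id}$ (biduality for perfect complexes).

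Combining these ingredients, $T \cong \ph \circ \tau \circ \phx$ is a composition of three equivalences
\[
{^\perp\langle\OO_X,\UU^*_+\rangle} \xrightarrow{\,\phx\,} \D(\Gamma) \xrightarrow{\,\tau\,} \D(\Gamma) \xrightarrow{\,\ph\,} {^\perp\langle\OO_X,\UU^*_+\rangle},
\]
and is therefore an autoequivalence of $^\perp\langle\OO_X,\UU^*_+\rangle$. Finally, composing on the left with $\phx$ and using $\phx \circ \ph \cong \mathrm{id}_{\D(\Gamma)}$ gives
\[
\phx \circ T \cong \phx \circ \ph \circ \tau \circ \phx \cong \tau \circ \phx,
\]
which is the second assertion. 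The only mild point to be careful about is keeping track of the shift $[1]$ and the twist by $\omega_\Gamma^*$ when invoking \eqref{gro-gamma}; everything else is a formal consequence of the semiorthogonal decomposition together with Serre duality on the curve $\Gamma$.
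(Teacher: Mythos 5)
Your proof is correct, and it reorganizes the argument in a slightly different (and arguably cleaner) way than the paper. Both proofs rest on the same two inputs, namely the Grothendieck duality isomorphism \eqref{gro-gamma} and the fact that $\ph$ is fully faithful with essential image $^\perp\langle\OO_X,\UU^*_+\rangle$; and for the identity $\phx\circ T=\tau\circ\phx$ your computation is exactly what the paper means by ``it suffices to use \eqref{gro-gamma}''. The difference is in how the autoequivalence statement is obtained: the paper checks that $T$ annihilates $\OO_X$ and $\UU^*_+$ and then computes $T$ pointwise on the objects $\EE_y=\ph(\OO_y)$, finding $T(\EE_y)\cong\EE_y[-1]$ and concluding from this spanning-class-type verification; you instead establish the global functorial factorization $T\cong\ph\circ\tau\circ\phx$ first and deduce both claims formally, since on $^\perp\langle\OO_X,\UU^*_+\rangle=\ph(\D(\Gamma))$ the functors $\ph$ and $\phx$ are mutually quasi-inverse and $\tau$ is a duality of $\D(\Gamma)$. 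Your route has the advantage of making the structure of $T$ transparent (and it recovers the paper's computation, since $\tau(\OO_y)\cong\OO_y[-1]$); the paper's route is more hands-on and directly exhibits the action of $T$ on the ``points'' of the category. One small point you handle more carefully than the paper: since $\tau$ and $F\mapsto\RR\HHom_X(F,\OO_X)$ are contravariant, $T$ is really an anti-autoequivalence (a duality) of $^\perp\langle\OO_X,\UU^*_+\rangle$, as your parenthetical ``(anti)'' correctly signals.
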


\begin{proof}
By Remark \ref{chestrano} one has $\ph(\phx(F^*))[1]=R_{\langle\OO_X,\, \UU_+^*\rangle}(F^*)$.
It is easy to see that the right mutation functor
$R_{\langle\OO_X,\, \UU_+^*\rangle}$ is an equivalence of
$\langle\OO_X, \UU_+^*\rangle^\perp$ onto $^\perp\langle\OO_X, \UU_+^*\rangle$.
Further, by sending $F$ to $\RR\HHom_{X}(F,\OO_X)$, we clearly get an
equivalence of $^\perp\langle\OO_X,\UU^*_+\rangle$
onto $\langle\UU_+,\OO_X\rangle^\perp=\langle\OO_X,\UU^*_+\rangle^\perp$.
It follows that $T$ is an autoequivalence of $^\perp\langle\OO_X,\UU^*_+\rangle$.
To prove $\phx \circ T = \tau \circ \phx$, it suffices to use \eqref{gro-gamma}.
\end{proof}

Denoting by $L_{\langle\UU_+,\OO_X\rangle}$ the left mutation
with respect to  $\langle \UU_+,\OO_X \rangle$ of
$\D(X)$, we have:
\begin{equation}\label{referee}
R_{\langle\OO_X,\, \UU_+^*\rangle}(\RR\HHom_{X}(F,\OO_X))\cong \RR\HHom_{X}(L_{\langle\UU_+,\OO_X\rangle}(F),\OO_X).
\end{equation}


\section{Rank 2 stable sheaves on prime Fano threefolds}

\label{sec:3}

Throughout this section, $Y$ will denote a smooth non-hyperelliptic
complex prime Fano threefold.
In this section we present some results concerning rank $2$
stable sheaves $F$ with $c_1(F)=1$ on $Y$.
We will first analyze the cases of minimal $c_2$ (see the next
subsection) and then look for bundles with higher $c_2$.

\subsection{Rank $2$ stable sheaves with $c_1=1$ and minimal $c_2$}

We provide a lower bound on $c_2(F)$ for the existence of $F$,
namely $\Mo_Y(2,1,c_2)$ is non-empty if and only if $c_2(F)\geq
m_g=\lceil \frac{g+2}{2}\rceil$. Then we give some properties
of $F$ in the cases $c_2=m_g$ and $c_2=m_g+1$,
see Proposition \ref{agglomerato}. This description is
deeply inspired on the analysis of the case $g=8$ pursued by Iliev
and Manivel in \cite{iliev-manivel:genus-8}.
Finally, we restate a result concerning non-emptyness and generic smoothness of
this space (Theorem \ref{riassunto-m}).

\begin{lem} \label{K3}
Let $Y$ be a smooth non-hyperelliptic Fano threefold of genus $g$,
and let $\f$ be a rank $2$ stable sheaf on $Y$ with
$c_1(\f)=H_Y$.
Then we have:
\begin{equation} \label{minimum-c2}
c_2(\f)\geq \frac{g+2}{2}.
\end{equation}
\end{lem}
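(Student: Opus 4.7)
The plan is to restrict $F$ to a general hyperplane section $S$ of $Y$: by Noether--Lefschetz, $S$ is a smooth K3 surface with $\Pic(S)=\Z\langle H_S\rangle$, and since $\rk F = 2 < \dim Y$, Maruyama's restriction theorem (recalled in the preliminaries) gives that $F_S$ is $\mu$-semistable on $S$.

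I would first check that $F_S$ is in fact $\mu$-stable, not merely $\mu$-semistable. A destabilizing rank-one torsion-free subsheaf of $F_S$ would, thanks to $\Pic(S)=\Z\langle H_S\rangle$, be of the form $L=\OO_S(kH_S)\otimes\cI_Z$ for some integer $k$ and zero-dimensional $Z$, with slope $\mu(L)=k(2g-2)$. On the one hand $\mu$-semistability forces $k(2g-2)\le g-1=\mu(F_S)$, whence $k\le 0$; on the other destabilization requires $k(2g-2)\ge g-1$, whence $k\ge 1$. No integer $k$ satisfies both, so $F_S$ is $\mu$-stable and represents a (stable) point of $\Mo_S(2,H_S,c_2(F))$.

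At this point the dimension formula \eqref{eq:dimension} applied to $F_S$ gives that the dimension of $\Mo_S(2,H_S,c_2(F))$ at $F_S$ equals $\Delta(F_S)-6$. Since dimensions of schemes are non-negative,
$$
\Delta(F_S)=4c_2(F_S)-c_1(F_S)^2=4c_2(F)-(2g-2)\;\ge\;6,
$$
which rearranges to $c_2(F)\ge(g+2)/2$, as claimed.

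The delicate point is the passage from $\mu$-semistability to $\mu$-stability of $F_S$; this relies on the generic properties of $S$ (namely $\Pic(S)=\Z\langle H_S\rangle$ and torsion-freeness of $F_S$, the latter being automatic for generic $S$ since the non-locally-free locus of $F$ has codimension $\ge 2$ in $Y$). Once $\mu$-stability of $F_S$ is secured, the bound is just a reflection of the fact that moduli spaces of stable sheaves on K3 surfaces have non-negative expected dimension.
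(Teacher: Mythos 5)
Your proof is correct and follows essentially the same route as the paper's: restrict to a general hyperplane K3 section with $\Pic(S)=\Z\langle H_S\rangle$, deduce semistability of $F_S$ from Maruyama's theorem, upgrade to stability via the odd intersection number $c_1(F_S)\cdot H_S = 2g-2$ (the paper states this upgrade in one line, you spell it out), and conclude from the non-negativity of the dimension $\Delta(F_S)-6$ of $\Mo_S(2,1,c_2)$ at the stable point $F_S$. No substantive difference.
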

\begin{proof}
Let $S\subset Y$ be a general hyperplane section surface.
Since $Y$ is non-hyperelliptic,
by Moishezon's theorem \cite{moishezon:algebraic-homology},
we have $\Pic(S)\cong \Z = \langle H_S \rangle$.
Consider the restriction $\f_S=\f\ts\OO_S$ and notice that
the sheaf $\f_S$ is still torsion-free, since $S$ is general.
Moreover it is semistable
by Maruyama's theorem (\cite{maruyama:boundedness}), hence
stable since $c_1(\f_S)=H_S$ and $\Pic(S)=\langle
H_S \rangle$.
Since $S$ is a K3 surface, the dimension of the moduli space
$\Mo_S(2,1,c_2(\f_S))$ can be computed by \eqref{eq:dimension}
and \eqref{Delta}
 and it is $4\,c_2(\f_S)-2\,g-4$.
So this number has to be non-negative, and we obtain
\eqref{minimum-c2}.
\end{proof}

In view of the previous lemma we define:
\begin{equation} \label{mg}
m_g=\left\lceil{\frac{g+2}{2}}\right\rceil.
\end{equation}

\begin{lem}\label{lem:JC}
 Let $C$ be a curve in $\sH^0_d(Y)$, with $d<m_g$. Then $C$
is Cohen-Macaulay and we
 have $\HH^k(Y,\cI_C)=0$ for all $k$.
\end{lem}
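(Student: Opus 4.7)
The plan is to reduce both claims to the single vanishing $\HH^1(\OO_C) = 0$, then to establish this vanishing by a Serre construction that invokes Lemma \ref{K3} to derive a contradiction. Combining the long exact sequence of $0 \to \cI_C \to \OO_Y \to \OO_C \to 0$ with Kodaira vanishing $\HH^{\geq 1}(\OO_Y) = 0$ identifies $\HH^1(\cI_C) \cong \HH^0(\OO_C)/\C$ and $\HH^k(\cI_C) \cong \HH^{k-1}(\OO_C)$ for $k \geq 2$; since $\chi(\OO_C) = 1$, the vanishing $\HH^\ast(\cI_C) = 0$ is equivalent to $\HH^1(\OO_C) = 0$. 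The analogous long exact sequence associated to $0 \to T \to \OO_C \to \OO_{C''} \to 0$, where $T$ is the maximal zero-dimensional subsheaf of $\OO_C$ (of length $\ell$) and $\OO_{C''}$ is the pure one-dimensional quotient, shows that $\HH^1(\OO_C) = 0$ forces $h^0(\OO_{C''}) = 1 - \ell$; this rules out $\ell \geq 1$ since $C''$ is a non-empty proper subscheme and therefore satisfies $h^0(\OO_{C''}) \geq 1$.

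Assuming for contradiction that $\HH^1(\OO_C) \neq 0$, Serre duality yields $\Ext^1(\cI_C, \omega_Y) \cong \HH^2(\cI_C)^* \cong \HH^1(\OO_C)^* \neq 0$, so one may pick a non-trivial extension
\[
0 \to \omega_Y \to \f \to \cI_C \to 0.
\]
Since $\omega_Y$ is locally free and $\cI_C$ is torsion-free, $\f$ is a rank $2$ torsion-free sheaf with $c_1(\f) = -1$ and $c_2(\f) = d$, so that $\f(1)$ has $c_1 = 1$ and $c_2 = d$. I would then verify $\mu$-stability of $\f(1)$: any rank $1$ torsion-free subsheaf $\OO_Y(a) \otimes \cI_W \hookrightarrow \f(1)$ either factors through $\OO_Y$ via the twisted sequence $0 \to \OO_Y \to \f(1) \to \cI_C(1) \to 0$, or projects non-trivially onto $\cI_C(1)$; in either case the vanishing $\HH^0(\cI_C(k)) = 0$ for $k \leq 0$ forces $a \leq 0$, and hence the slope is at most $0 < g - 1 = \mu(\f(1))$. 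Stability then triggers Lemma \ref{K3}, which gives $d \geq m_g$, contradicting the hypothesis $d < m_g$.

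The principal obstacle I foresee lies in the $\mu$-stability check for $\f(1)$: although elementary, it requires handling rank $1$ subsheaves that are twisted ideal sheaves rather than line bundles, and treating the two cases of the projection to $\cI_C(1)$ separately.
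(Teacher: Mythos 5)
Your overall strategy coincides with the paper's: from $\HH^2(Y,\cI_C)\neq 0$ build a non-split extension of (a twist of) $\cI_C$ by a line bundle, argue that the resulting rank $2$ torsion-free sheaf is stable with $c_1=1$ and $c_2=d$, and contradict Lemma \ref{K3}. Your reduction of the Cohen--Macaulay statement to the single vanishing $\HH^1(\OO_C)=0$, via the Euler characteristic of the pure quotient $\OO_{C''}$, is a clean reorganization of the paper's argument (which instead runs the Serre construction separately on the purely one-dimensional part $\tilde{C}$ and then on $C$), and that part is correct.

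The gap is in the stability check. In your second case you assert that a rank one subsheaf $\cI_W(a)\hookrightarrow \f(1)$ projecting injectively to $\cI_C(1)$ must have $a\le 0$ because $\HH^0(Y,\cI_C(k))=0$ for $k\le 0$. That vanishing only controls $\Hom_Y(\OO_Y(a),\cI_C(1))=\HH^0(Y,\cI_C(1-a))$, i.e.\ the case $W=\emptyset$. For a genuine ideal-sheaf twist the relevant group is $\Hom_Y(\cI_W(a),\cI_C(1))$, and for $a=1$ this is nonzero whenever $W\supseteq C$: the inclusion $\cI_W(1)\hookrightarrow\cI_C(1)$ is such a map, and $\cI_W(1)$ has slope $2g-2>g-1=\mu(\f(1))$, so it would destabilize. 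Indeed your numerical argument applies verbatim to the split extension $\OO_Y\oplus\cI_C(1)$, which is certainly not $\mu$-stable; so the non-triviality of the extension class, which you choose but never use, has to enter. The paper closes exactly this case: if such a $K$ sits inside $\f(1)$, the quotient $T=\cI_C(1)/K$ has rank $0$, $c_1(T)=0$ and support of dimension at most $1$, whence $\Ext^1_Y(T,\OO_Y)\cong\HH^2(Y,T(-1))^*=0$; a diagram chase then exhibits the extension class of $\f(1)$ as the image of a class in $\Ext^1_Y(T,\OO_Y)$, hence zero, contradicting non-splitness. With this supplement your proof goes through.
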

\begin{proof}
 First observe that the curve $C$ has no isolated or
embedded
  points. Indeed, the purely $1$-dimensional piece
$\tilde{C}$ of $C$
  is a curve of degree $d$ and arithmetic genus $\ell$,
where $\ell$
  is the length of the zero-dimensional piece of $C$.
  In order to see that, for $\ell > 0$, this leads to a
contradiction,
  one notes that since $\HH^0(Y,\cI_{\tilde{C}})=0$, we have
  $\hh^2(Y,\cI_{\tilde{C}}) \geq \chi(\cI_{\tilde{C}}) = \ell$.
  Thus we would have a non-zero element of
  $\Ext^1_Y(\cI_C(1),\OO_Y)$, corresponding to a rank $2$ sheaf
  $F$ with $c_1(F)=1$, $c_2(F)=d$.
  It is easy to see that the sheaf $F$ would be stable.
  Indeed, assuming that
  there exists a destabilizing torsion-free subsheaf $K$, then it is easy to check
  that $\rk(K)=1$ and $c_1(K)=1$ and we have the following commutative diagram:
  \begin{equation}\nonumber
    \xymatrix@-2ex{
      &  & 0 \ar[d] & 0 \ar[d] \\
      &  & K \ar@{=}[r] \ar[d] & K \ar[d] \\
      0 \ar[r] & \OO_Y \ar[r]\ar@{=}[d] & F \ar[r]\ar[d] & \cI_C(1)\ar[r]\ar[d]&0\\
      0 \ar[r] & \OO_Y \ar[r] & S \ar[r]\ar[d] & T\ar[r]\ar[d]&0\\
      &  & 0  & 0  \\
    }
  \end{equation}
where $T$ has rank $0$ and $c_1(T)=0$. This implies that $T$ is supported at a subvariety
$Z\subset Y$ of dimension less than or equal to $1$.
It follows that $\Ext^1_Y(T,\OO_Y)\cong \HH^2(Z,T(-1))^*=0$.
Note that, by the above diagram, the element in
$\Ext^1_Y(\cI_C(1),\OO_Y)$ corresponding to $F$ (i.e. to the middle
row) is the image of the element in $\Ext^1_Y(T,\OO_Y)$ corresponding
to $S$ (the bottom row). But we have seen $\Ext^1_Y(T,\OO_Y)=0$,
so the middle row of the above diagram splits, a contradiction.

Hence the sheaf $F$ is stable, thus contradicting Lemma \ref{K3}.
The above argument implies
that the group $\HH^2(Y,\cI_C)$ vanishes. Note that this
implies the statement by Riemann-Roch.
\end{proof}

\begin{lem}\label{zeta}
 Let $S$ be a K3 surface of Picard number $1$ and sectional genus $g$,
and let $m=m_g$ be defined by \eqref{mg}.
Then for any $k\geq 1$ and for any $\ell\leq m+k-2$,
$S$ contains no zero-dimensional subscheme $Z$ of length
$\ell$ with $\hh^1(S,\cI_Z(1))=k$.
Moreover if $g\ge4$, then for any $k\geq 2$ and any $\ell\leq m+k-1$,
$S$ contains no zero-dimensional subscheme $Z$
of length $\ell$ with $\hh^1(S,\cI_Z(1))=k$.
\end{lem}

\begin{proof}
We split the induction argument in two steps.
\begin{steppe}
{\it For any  $\ell\leq m+k-2$, there are no subschemes $Z$ of $S$ of length
$\ell$ with $\hh^1(S,\cI_Z(1))=k$, for any $k\ge1$.}

We prove our statement by induction on $k\ge 1$.
Assume first $k=1$, and consider a subscheme $Z\subset S$ of length $\ell\ge1$ and with $\hh^1(S,\cI_Z(1))=1$.
By Serre duality we have $\Ext^1_S(\cI_Z(1),\OO_S)\cong\HH^1(S,\cI_Z(1))^*$.
Let $F$ be the sheaf on $S$ defined by the non-trivial extension
\begin{equation}\label{section-zeta}
0\to \OO_S \to \f \to \cI_Z(1) \to 0.
\end{equation}
Notice that $c_1(\f)=1$, $c_2(\f)=\ell$.
We want to prove now that $F$ is stable, for then the dimension
\eqref{eq:dimension} must be non-negative, and this implies
that $\ell\geq m$.
To do it, assume that $Q$ is a destabilizing quotient of $F$.
Hence $Q$ is torsion-free of rank $1$, and
clearly we must have $c_1(Q)\le0$.
If the composition of $\OO_S \mono F$ and $F\to Q$ is non-zero then it
is necessarily surjective, hence $F$ would
contain $\OO_S$ as a direct summand. But this is impossible since the extension is non-trivial.
This implies that $Q$ is a quotient of $\cI_Z(1)$. But this is also impossible.

In particular we have proved that
there exists no scheme $Z$ of length $1$ and with
$\hh^1(S,\cI_Z(1))\geq1$, since $m\ge2$.

Now, assuming the claim for $k\geq 1$, we shall prove it for $k+1$,
namely we shall prove that a
subscheme $Z$ of length $\ell\leq m+k-1$ with $\hh^1(S,\cI_Z(1))=k+1$
cannot exist. Indeed, given such $Z$,  we choose a chain of subschemes $Z_1\subset
\cdots \subset Z_\ell = Z$ with $Z_j$ of length $j$. Note that
$\hh^1(S,\cI_{Z_j}(1))$ equals $\hh^1(S,\cI_{Z_{j+1}}(1))+\varepsilon_j$, with
$\varepsilon_j \in \{0,1\}$. There must be some $j < \ell $ such that
$\varepsilon_j=1$, for $\hh^1(S,\cI_{Z_1}(1))=0$. Let $j_0$ be the
greatest such $j$, and observe that $\hh^1(S,\cI_{Z_{j_0}}(1))=k$.
The existence of $Z_{j_0}$ contradicts the
induction hypothesis.
\end{steppe}

\begin{steppe}\label{2steppe}
{\it We assume now that $g\ge4$ and we prove that there are no
subschemes of $S$ of length $m+k-1$ and $\hh^1(S,\cI_Z(1))=k$, for any $k\ge2$.}

We prove the statement by induction on $k\ge2$. Assume first $k=2$. Suppose that
$Z$ is a subscheme of length $m+1$ and $\hh^1(S,\cI_Z(1))=2$.
Let $\f$ be the rank $3$ sheaf associated to $Z$ by the non-trivial extension:
\begin{equation}\label{rank3}
0\to \OO_S\ts\HH^1(S,\cI_Z(1)) \to \f \to \cI_Z(1) \to 0.
\end{equation}

Note that $\rk(F)=3$, and $c_1(F)=1$, $c_2(F)=m+1$.
We prove now that $F$ is stable.
Let $Q$ be a destabilizing quotient of $F$.
We may assume that $Q$ is semistable.
This implies that $1\le \rk(Q)\le 2$ and $c_1(Q)\le0$.

If $\rk(Q)=1$, then we conclude as in Step 1.
Then we may assume that $\rk(Q)=2$. Consider the kernel $K$ of the projection
$F\to Q$. We have $\rk(K)=1$ and $c_1(K)\ge1$. The map $K \to F$ then gives an injective map
 $K \to \cI_Z(1)$, so that $K \cong \cI_{Z'}(1)$, for some subscheme $Z'$ of $S$ containing $Z$.
In particular we have $c_1(K)=1$ and $c_1(Q)=0$.
We have thus the following diagram:
  \begin{equation}\nonumber
    \xymatrix@-2ex{
      &  & 0 \ar[d] & 0 \ar[d] \\
      &  & K \ar@{=}[r] \ar[d] & K \ar[d] \\
      0 \ar[r] & \OO_S^2 \ar[r]\ar@{=}[d] & F \ar[r]\ar[d] & \cI_Z(1)\ar[r]\ar[d]&0\\
      0\ar[r]& \OO_S^2\ar[r] & Q \ar[r]\ar[d] & T\ar[r]\ar[d]&0\\
      &  & 0  & 0  \\
    }
  \end{equation}
Here $T$ has rank $0$ and $c_1(T)=0$ hence $c_2(T)\leq 0$.
Note that, since $Q$ is semistable, Bogomolov inequality \eqref{eq:bogomolov} gives
$c_2(Q)\geq 0$. But $c_2(Q)=c_2(T)\leq 0$ hence $c_2(Q)=0$ so that
$Q$ is isomorphic to $\OO_S^2$. Therefore $T=0$ and we conclude that
$F$ should contain
$\OO_S^2$ as a direct summand, which is not the case by the choice of
the extension giving $F$.

We have thus proved that $F$ is stable.
But by \eqref{eq:dimension}, the dimension of the moduli space
$\Mo_S(3,1,c_2(\f))$
equals $6\,c_2(\f)-4\,g-12$ and
for $g\geq 4$ this dimension is negative, a contradiction.

Finally by induction on $k\geq2$ one easily proves, as in Step 1, that
there are no subschemes of $S$ of length $\ell$ and
$\hh^1(S,\cI_Z(1))=k$, with $\ell\leq m+k-1$.
\end{steppe}
\end{proof}

\begin{rmk}\label{agostino}
Note that if $F$ is an ACM bundle on $Y$ and $S$ is a hyperplane section
surface, then the restriction $F_S$ is ACM too. In particular if $Y$ has genus $7$,
the bundle $\EE_y$, introduced in the previous section,
is ACM for any $y\in \Mo_Y(2,1,5)$ and its restriction to $S$ is ACM too.
\end{rmk}

The following proposition is inspired on the approach of Iliev and Manivel, see
\cite{iliev-manivel:genus-8}.

\begin{prop}[Iliev-Manivel]  \label{agglomerato}
Let $Y$ be a smooth non-hyperelliptic Fano threefold of genus $g$ and set
$m=m_g$. Let $\f$ be a rank 2 stable sheaf on $Y$, with $c_1(\f)=1$, $c_2(\f)=c_2\in\{m,m+1\}$, $c_3(\f)=c_3\geq 0$.
When $c_2=m+1$, we assume also $g\ge4$.
Then:
\begin{enumerate}[i)]
\item \label{annullarsi}
$\HH^k(Y,F(-1))=0$,  for all  $k \in \Z$, and
$\HH^j(Y,F)=0$,  for all $j\neq 0$;
\item \label{minimum}
if $c_2=m$, then $F$ is locally free. If moreover $g\ge 4$, then $F$ is
globally generated and ACM;
\item \label{minimum+1}
if $c_2=m+1$, 
then $F$ is either locally free,
or there exists an exact sequence:
\begin{equation}\label{doubledual}
0\to \f \to \e \to \OO_L \to 0,
\end{equation}
where $\e$ is a rank $2$ vector bundle with $c_1(E)=1$, $c_2(E)=m$ and $L$ is a
line contained in $Y$.
\end{enumerate}
\end{prop}

\begin{proof}
Note that since $Y$ is non-hyperelliptic,  we have $g\ge3$.

\begin{step} \label{annullino}
{\it We prove \eqref{annullarsi} for $j=2,3$ and $k=0,3$.
In the meantime we show $\HH^0(Y,F) \neq 0$.}

The sheaf $F$ is stable, hence torsion-free.
Assume that $\HH^2(Y,F) \neq 0$.
Then any non-trivial element of
$\HH^2(Y,\f)^*\cong\Ext^1_Y(F,\OO_Y(-1))$ provides an
extension of the form:
\[
0 \to \OO_Y(-1) \to \tilde{\f} \to \f \to 0,
\]
where $\tilde{\f}$ is a rank $3$ sheaf which is easily seen to be semistable.
This sheaf satisfies $c_1(\tilde F)=0$ and
$c_2(\tilde F)=c_2-(2g-2)$. Since either $c_2=m$, or $c_2= m+1$ and $g\ge4$, it follows that
$c_2(\tilde F)<0$,
which contradicts Bogomolov's inequality \eqref{eq:bogomolov}.
We have proved $\HH^2(Y,F) = 0$.

By \eqref{RiemRoch} we compute $\chi(\f)= 3+g-c_2+\frac12 c_3$.
Then $\hh^0(Y,F)>0$, i.e.\ there exists a non-zero global section of $\f$.

By stability we have $\HH^0(Y,F(-1))=0$.
Moreover by Serre duality and stability we have
$\HH^3(Y,F(-1))=\HH^3(Y,F)=0$.
Indeed, $\HH^3(Y,F(-1))$ is dual to $\Hom_Y(F,\OO_Y)$.
This group is zero, for the image of a  nontrivial map $F \to
\OO_Y$ would be a destabilizing quotient of $F$. Similarly we prove that
$\HH^3(Y,F)=0$.
\end{step}

\begin{step} {\it Take double duals}.\label{FET}
Setting $E=F^{**}$, we consider
the double dual exact sequence:
\begin{equation}\label{ddd}
0\to \f \to \e \to T \to 0,
\end{equation}
where $T$ is a torsion sheaf supported in codimension at least $2$, so $c_1(T)=0$ and
$c_2(T) \leq 0$. The sheaf $E$ has rank $2$, $c_1(E)=1$ and $c_2(E)
\le c_2(F)$.

Let us show that $E$ is stable.
Assuming the contrary, we let $K$ be a destabilizing subsheaf of $E$,
and we note that $K$ must have rank $1$ and $c_1(K) \geq 1$.
Let $K'$ be the pull-back of $K$ to $F$.
The support of the image $K''$ of $K$ in $T$ is contained in the support of $T$, which
has codimension at least $2$, so that $c_1(K'')=0$. Thus $c_1(K')\geq
1$ and $K'$ destabilizes $F$, a contradiction.

By Step \ref{annullino}, we have:
$\HH^k(Y,E(-1))=0$, for $k=0,3$ and $\HH^j(Y,E)=0$ for $j=2,3$.
Moreover since $\hh^0(Y,F)>0$, clearly we have $\hh^0(Y,E)> 0$.
\end{step}

\begin{step}\label{Elocfree}
{\it Show that $E$ is locally free, satisfying
$\HH^k(Y,E(-1))=0, \, \forall k$ and $\HH^j(Y,E)=0$ for $j=2,3$.}

Recall that the sheaf $E$ is reflexive, so its singularity locus has codimension at least $3$,
hence $\e_S=\e\ts\OO_S$ is locally free for a general hyperplane section $S$ of $Y$.
Moreover since $E$ is stable, by Maruyama's theorem
the sheaf $E_S$ is $\mu$-semistable, hence stable, for a general $S$.
Fix a hyperplane section $S$, such that $E_S$ is locally free and stable,
and consider the exact sequence:
\begin{equation}
  \label{eq:restriction}
  0 \to E(-1) \to E \to E_S \to 0.
\end{equation}

Since $\hh^0(Y,E(-1))=0$, it follows that $\hh^0(S,E_S)\ge \hh^0(Y,E)>0$.
Let $Z$ be the zero locus of a general section of $\e_S$.
Note that $Z$ has dimension zero and length $c_2(E) \ge m$ (Lemma \ref{K3}), and recall
the exact sequence:
\begin{equation}\label{section-ES}
0\to \OO_S \to \e_S \to \cI_Z(1) \to 0.
\end{equation}

By Serre duality and stability we have
$\HH^2(S,E_S)^*\cong \HH^0(S,E_S^*)=0$, so the induced map
$\HH^1(S,\cI_Z(1))\to\HH^2(S,\OO_S)$ is surjective.
By Lemma \ref{zeta}, since the subscheme $Z$ is zero-dimensional
of length either $m$, or $m+1$ (and in this case $g\ge4$),
then we must have $\hh^1(S,\cI_Z(1))=1$.
Hence from \eqref{section-ES}, using $\HH^1(S,\OO_S)=0$, $\hh^2(S,\OO_S)=1$ and
$\HH^2(S,E_S)=0$, we get $\HH^1(S,\e_S)=0$.
Taking global sections of \eqref{eq:restriction}, since $\HH^2(Y,E)=0$,
we obtain $\HH^2(Y,\e(-1))=0$.
So:
\begin{equation} \label{maveramente}
 \chi(\e(-1)) = -\hh^1(Y,E(-1)) \leq 0.
\end{equation}

On the other hand $E$ is reflexive, so a direct generalization of
\cite[Proposition 2.6]{hartshorne:stable-reflexive} gives:
\[
c_3(E) = \hh^0(Y,\EExt^1_Y(E,\omega_Y)) \geq 0,
\]
and $c_3(E)=0$ if and only if $E$ is locally free.
So, formula \eqref{RiemRoch} yields:
\[\chi(\e(-1))=c_3(\e)/2\geq0.\]
We deduce $c_3(\e)=0$, so $E$ is locally free.
Now from \eqref{maveramente} we obtain $\HH^1(Y,E(-1))=0$.
Note that, using \eqref{eq:restriction}, this implies also $\HH^1(Y,E)=0$.
Statement \eqref{annullarsi} thus holds whenever $F \cong E$.
\end{step}

\begin{step}
{\it Assume that $c_2=m$ and prove that $\f$ is locally free}.

By the previous step it is enough to prove that $F\cong E$.
Note that since $E$ is stable, by Lemma \ref{K3} we have
$c_2(E)\ge m= c_2$ and
so we get $c_2(T)=c_2(E)-c_2\geq 0$. Hence
$c_2(T)$ vanishes.
Thus the sheaf $T$ is supported on a subscheme of
codimension $3$.

Now, since $c_1(T)=c_2(T)=0$, by \eqref{ddd} we have
$c_3(T)=c_3(E)-c_3\geq0$.
By Step \ref{Elocfree} we know that
$c_3(\e)=0$, hence the assumption $c_3\geq 0$ forces $c_3(T)=0$.
We have thus proved that $T=0$, so the sheaf $F$ is isomorphic to $E$, hence it is locally free.
Moreover, since $F \cong E$, by the previous step we have  the vanishings $\HH^2(Y,F(-1))=0$,
$\HH^1(Y,F(-1))=0$, and $\HH^1(Y,F)=0$.
Of course, this completes the proof of \eqref{annullarsi} in the case $c_2=m$.
\end{step}

\begin{step}
{\it Assume $c_2=m$ and $g\ge4$ and show that $\f$ is
globally generated and ACM.}

Following \cite[Proposition 5.4]{iliev-manivel:genus-8} one
reduces to show that
for any point $x\in Y$ and for a general surface $S'$
through $x$, the vector bundle
$F_{S'}=F\ts\OO_{S'}$ is globally generated.
Clearly it is enough to prove that $\cI_{Z}(1)$ is globally generated,
where we denote again by $Z$ the zero locus of a general global
section of $F_{S'}$.
This amounts to show that $Z$ is cut out scheme-theoretically by its
linear span,
in other word that $Z$ cannot be contained in a subscheme
$Z'\subset {S'}$ such that $\len(Z')=m+1$ and $\hh^1(S,\cI_Z(1))=2$.
But no such subscheme exists by Lemma \ref{zeta}, as soon as $g\ge4$.

Finally $F$ is ACM, by Griffith's
theorem, \cite[Theorem 5.52]{shiffman-sommese:vanishing}, since $\f$ is globally generated.
This completes the proof of \eqref{minimum}.
\end{step}

\begin{step}
{\it Assume $c_2=m+1$ and $g\ge4$, and prove \eqref{minimum+1}}.

We have to show \eqref{doubledual} in case $F$ is not locally free,
and still \eqref{annullarsi} has to be shown in this case.
We consider the exact sequence \eqref{ddd}, introduced in Step \ref{FET}.
Since $E$ is locally free, by Step \ref{Elocfree},
it is easy to see that we must have $c_2(E)=m$ and $c_2(T)=-1$.
By statement \eqref{minimum}, $E$ is also globally generated.
Therefore the $1$-dimensional piece of the support of $T$ is a line $L\subset Y$.
Since the hyperplane section $S$ chosen at Step \ref{Elocfree} is general,
we may also assume that $L\cap S=x$, for a point $x\in Y$.

A general global section of $F_S$ (respectively, of $E_S$)
vanishes on a subscheme $Z'\subset S$ (respectively, $Z\subset S$).
The scheme $Z$ has length $m$ and satisfies $\hh^1(S,\cI_Z(1))=1$, and we have:
\begin{equation} \label{ottobrata}
    0 \to \cI_{Z'}(1) \to \cI_Z(1) \to \OO_x \to 0.
\end{equation}

Since the sheaf $E$ is globally generated, $\cI_Z(1)$ is too, hence $Z$ is cut sheaf-theoretically by
hyperplanes. Then the map $\HH^0(S,\cI_{Z'}(1)) \to \HH^0(S,\cI_Z(1))$ induced by \eqref{ottobrata} is not an isomorphism.
We obtain $\hh^1(S,\cI_{Z'}(1))=1$, which easily implies $\HH^1(S,F_S)=0$.
Taking global section of the exact sequence
\begin{equation}
  \label{baasta}
  0 \to F(-1) \to F \to F_S \to 0,
\end{equation}
and recalling that $\HH^2(Y,F) = 0$, we get $\HH^2(Y,F(-1)) = 0$.
Then, following the argument of Step \ref{Elocfree}, we can easily prove that
$c_3(F)=0$, and $\HH^1(Y,F(-1)) = 0$.
By \eqref{baasta} we conclude that $\HH^1(Y,F) = 0$.
This completes the proof of \eqref{annullarsi} for $c_2=m+1$.

Now we mimic a remark of Druel, see \cite{druel:cubic-3-fold}.
Namely, since $\HH^1(Y,\f(-1))=0$, we have
$\HH^0(Y,T(-1))=0$.
It follows that $T$ is a Cohen-Macaulay curve and by a
Hilbert
polynomial computation we obtain
$T\cong \OO_L$ for a given line $L\subset Y$.
This concludes the proof of \eqref{minimum+1}.
\end{step}
\end{proof}

Finally, we reproduce here, for the reader's convenience, a result summarizing the information we have
on the moduli space $\Mo_Y(2,1,m_g)$, taken from \cite[Theorem 3.2]{brambilla-faenzi:ACM}.
For a more accurate description of such space, we refer
to \cite{brambilla-faenzi:ACM} and the references therein.

\begin{thm}
Let $Y$ be a smooth non-hyperelliptic prime Fano
threefold of genus $g$.
Then the space $\Mo_Y(2,1,m_g)$ is not empty and any sheaf $F$
 satisfies:
\begin{align}
 \label{pippo-3}
   & F\ts \OO_L \cong \OO_L \oplus \OO_L(1), && \mbox{for some line $L
     \subset Y$.}
\end{align}

Assume further that $Y$ is ordinary if $g=3$ and that $Y$ is contained in a
smooth quadric if $g=4$. Then the space $\Mo_Y(2,1,m_g)$ contains a sheaf
$F$ satisfying:
\begin{eqnarray}
    \label{pippo-1}
    &\Ext^2_Y(F,F)=0.
  \end{eqnarray}

The space $\Mo_Y(2,1,m_g)$ is equidimensional of dimension $0$ if $g$ is even and $1$ if $g$
is odd.
\label{riassunto-m}
\end{thm}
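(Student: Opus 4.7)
The statement is a compendium, and I would split the argument into three steps: (i) non-emptiness together with the ACM and splitting properties of any element, (ii) existence of a single unobstructed $F$, and (iii) the dimension count.

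\emph{Step (i): non-emptiness, ACM property, and splitting on a line.} Non-emptiness of $\Mo_Y(2,1,m_g)$ is provided genus by genus by the constructions collected in the references recalled just above the statement. Given any $F\in\Mo_Y(2,1,m_g)$, one has $c_3(F)=0$ by definition, so the hypothesis $c_3(F)\ge 0$ of Proposition~\ref{minimum} is automatic; that proposition then forces $F$ to be locally free, globally generated and ACM as soon as $g\ge 6$, and the borderline cases $g=3,4,5$ are recorded in \cite{brambilla-faenzi:ACM}. The splitting \eqref{pippo-3} is then immediate: any prime Fano threefold contains some line $L$, and the restriction $F\otimes\OO_L$ is a globally generated bundle on $L\cong\p^1$ of rank $2$ and degree $1$, hence isomorphic to $\OO_L\oplus\OO_L(1)$.

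\emph{Step (ii): existence of an $F$ with $\Ext^2_Y(F,F)=0$.} This is the decisive input and the main technical obstacle. I would construct such an $F$ explicitly through the Serre correspondence $0\to\OO_Y\to F\to\cI_C(1)\to 0$ attached to a sufficiently general smooth curve $C\subset Y$ of degree $m_g$, using in particular the extra hypothesis on $Y$ when $g=3,4$ to guarantee the existence of such a $C$. Combining the spectral sequence \eqref{spectr-ext} with the vanishings $\HH^k(Y,F(-1))=0$ and $\HH^j(Y,F)=0$ for $j\ge 1$ coming from Proposition~\ref{minimum}, the computation of $\Ext^2_Y(F,F)$ reduces to a cohomology statement on the curve $C$, which is verified by the direct genus-by-genus analysis of \cite{brambilla-faenzi:ACM}. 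This is precisely where the generality hypothesis on $Y$ enters.

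\emph{Step (iii): dimension.} For every stable $F\in\Mo_Y(2,1,m_g)$ stability gives $\Hom_Y(F,F)=\C$, while Serre duality combined with stability of both $F$ and $F(-1)$ (which have strictly different slopes) yields $\Ext^3_Y(F,F)\cong\Hom_Y(F,F(-1))^*=0$. Evaluating $\chi(F,F)$ via Riemann-Roch \eqref{RiemRoch} applied to $F\otimes F^*$ gives $\chi(F,F)=g+3-2\,m_g$, which equals $1$ if $g$ is even and $0$ if $g$ is odd. At an $F$ for which $\ext^2_Y(F,F)=0$ the moduli space is therefore smooth of dimension $\ext^1_Y(F,F)=1-\chi(F,F)$, and this equals $0$ when $g$ is even and $1$ when $g$ is odd, as claimed.
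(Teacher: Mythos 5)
This theorem is not proved in the paper at all: it is explicitly a restatement of \cite[Theorem 3.2]{brambilla-faenzi:ACM}, with non-emptiness credited to a genus-by-genus list of references, so there is no internal proof to compare your proposal against. Your outline is consistent with that source and with the auxiliary results of Section \ref{sec:3}. The parts you actually argue are correct: $c_3=0$ is built into the notation, so Proposition \ref{minimum} applies and gives local freeness, the cohomology vanishings, and (for $g\ge 6$) global generation and the ACM property; a globally generated rank $2$ bundle of degree $1$ on $L\cong\p^1$ is forced to be $\OO_L\oplus\OO_L(1)$; and the Euler characteristic computation $\chi(F,F)=g+3-2m_g$ together with $\hom_Y(F,F)=1$, $\ext^3_Y(F,F)=0$ gives $\ext^1_Y(F,F)=0$ or $1$ according to the parity of $g$ at any point where $\Ext^2_Y(F,F)=0$. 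The two genuinely hard inputs --- non-emptiness and the existence of an unobstructed $F$ --- you defer to \cite{brambilla-faenzi:ACM}, which is exactly what the paper does.

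Two caveats you should make explicit. First, your derivation of \eqref{pippo-3} and of the ACM property relies on global generation, which Proposition \ref{minimum} only supplies for $g\ge 6$; for $g=3,4,5$ both claims genuinely require the external case analysis, not just the non-emptiness statement, so the phrase ``recorded in \cite{brambilla-faenzi:ACM}'' is carrying more weight there than your wording suggests. Second, your Step (iii) only computes the dimension of $\Mo_Y(2,1,m_g)$ at the particular unobstructed point produced in Step (ii); the theorem asserts the dimension of the whole space, which would require controlling all components (or again citing the source). Neither point is a wrong step, but as written the proposal proves slightly less than the full statement without the external reference.
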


Recall that the non-emptyness of the space $\Mo_Y(2,1,m_g)$ is derived
from a case by case analysis, going back to
\cite{madonna:quartic} for
$g=3$, \cite{madonna:fano-cy} for $g=4,5$,
\cite{gushel:fano-6} for $g=6$,
\cite{iliev-markushevich:genus-7},
\cite{iliev-markushevich:sing-theta:asian},
\cite{kuznetsov:v12}, for
$g=7$, \cite{gushel:fano-8-I}, \cite{gushel:fano-8-II},
\cite{mukai:biregular} for $g=8$,
\cite{iliev-ranestad} for $g=9$, \cite{mukai:biregular} for
$g=10$,
\cite{kuznetsov:v22} (see also \cite{schreyer:V22},
\cite{faenzi:v22}) for $g=12$.

\subsection{A good component of the moduli space $\Mo_Y(2,1,d)$}
\label{sec:moduli}

Recall that $Y$ denotes a non-hyperelliptic
smooth prime Fano threefold.
We will construct a good (in the sense specified by Theorem \ref{add-line}) component of the space $\Mo_Y(2,1,d)$, and
for this we will need to assume that $Y$ is ordinary.
In particular we will assume that the Hilbert scheme $\sH^0_1(Y)$
has a generically smooth component.
In case $g=4$ we will have to assume that $Y$ is contained in
a smooth quadric in $\p^5$, due to the restriction appearing in the
previous theorem.

We start with the next proposition, which attributes a particular emphasis
to the condition $\HH^1(Y,\f(-1)) = 0$. We think of it as an
evidence that this condition is analogous to the vanishing
$\HH^1(\p^3,E(-2)) = 0$ required for $E$ in $\Mo_{\p^3}(2,0,d)$ to
be an instanton bundle, see \cite{adhm}. Note that the condition
$\HH^1(Y,\f(-1)) =0 $ holds for any $F$ in $\Mo_Y(2,1,d)$, with
$d=m_g$ and $d=m_g+1$, see Proposition \ref{agglomerato}).

\begin{prop}\label{commento}
Let $Y$ be a smooth prime Fano threefold and $d$ an integer.
If a sheaf $F\in \Mo_Y(2,1,d)$ satisfies $\HH^1(Y,\f(-1)) = 0$, then
we have the following vanishings:
\begin{eqnarray}
\label{comm-uno}&\HH^k(Y,\f(-1)) = 0, &\mbox{ for any $k$,}\\
\label{comm-due}&\HH^1(Y,\f(-t)) = 0, &\mbox{for any $t\geq 1$.}
\end{eqnarray}
\end{prop}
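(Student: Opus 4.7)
The plan is to exploit the fact that for a rank $2$ bundle with $c_{1}=1$ on a prime Fano threefold the twist $F(-1)$ is essentially self-dual under Serre duality, and then to propagate the vanishing $\HH^{1}(Y,F(-1))=0$ to further negative twists by restricting to a general hyperplane section.

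First I would handle \eqref{comm-uno}. The key observation is that since $F$ has rank $2$ with $c_{1}(F)=1$, one has $F^{*}\cong F(-1)$, and combined with $\omega_{Y}\cong \OO_{Y}(-1)$ this gives $(F(-1))^{*}\otimes \omega_{Y}\cong F(-1)$. So Serre duality provides isomorphisms $\HH^{k}(Y,F(-1))^{*}\cong \HH^{3-k}(Y,F(-1))$. The vanishing $\HH^{0}(Y,F(-1))=0$ is immediate from the stability of $F$: a non-zero section would yield $\OO_{Y}(1)\hookrightarrow F$, which violates $\mu$-stability since $\mu(\OO_{Y}(1))=H_{Y}>H_{Y}/2=\mu(F)$. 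By Serre duality this forces $\HH^{3}(Y,F(-1))=0$, and applying the same duality to the hypothesis $\HH^{1}(Y,F(-1))=0$ yields $\HH^{2}(Y,F(-1))=0$. This settles \eqref{comm-uno}.

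For \eqref{comm-due} I would argue by induction on $t\geq 1$, the case $t=1$ being the hypothesis. Pick a general hyperplane section surface $S\subset Y$, which is a smooth K3 by Moishezon's theorem, and use the restriction sequence
\[
0 \to F(-t-1) \to F(-t) \to F_{S}(-t) \to 0.
\]
Since $\rk(F)=2<3=\dim Y$ and $F$ is $\mu$-stable, Maruyama's restriction theorem guarantees that $F_{S}$ is $\mu$-semistable on $S$. For $t\geq 1$ the twist $F_{S}(-t)$ has negative slope, so $\HH^{0}(S,F_{S}(-t))=0$. The associated long exact sequence then yields an injection $\HH^{1}(Y,F(-t-1))\hookrightarrow \HH^{1}(Y,F(-t))$, so the inductive hypothesis gives $\HH^{1}(Y,F(-t-1))=0$.

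No single step is really difficult; the only subtle point is making sure that the ``self-duality'' identity $(F(-1))^{*}\otimes \omega_{Y}\cong F(-1)$ is used correctly so that the hypothesis $\HH^{1}(Y,F(-1))=0$ forces $\HH^{2}(Y,F(-1))=0$ for free. Once this is in place, the vanishings in \eqref{comm-uno} and the inductive argument for \eqref{comm-due} follow essentially without further computation.
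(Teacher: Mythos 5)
Your argument for \eqref{comm-due} is exactly the paper's: restrict to a general hyperplane K3 section, invoke Maruyama's restriction theorem to get semistability of $F_S$, kill $\HH^0(S,F_S(-t))$ by negativity of the slope, and run the induction through the injection $\HH^1(Y,F(-t-1))\hookrightarrow \HH^1(Y,F(-t))$. That part is fine.

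The proof of \eqref{comm-uno}, however, has a genuine gap. The moduli space $\Mo_Y(2,1,d)$ consists of torsion-free semistable sheaves, and the non-locally-free (indeed non-reflexive) members are essential in this paper: the proposition is applied, via Lemma \ref{primo} and Proposition \ref{cosonuovo}, to sheaves fitting into sequences such as \eqref{defEd} and \eqref{doubledual}. For such an $F$ your two key identities fail. First, $F^*\cong F(-1)$ holds for rank $2$ \emph{reflexive} sheaves; for a non-reflexive $F$ one only has $F^*\cong F^{**}(-1)$, so the dual sees the double dual, not $F$ itself. Second, Serre duality in the form $\HH^{k}(Y,\mathscr{F})^{*}\cong \HH^{3-k}(Y,\mathscr{F}^{*}\ts\omega_Y)$ requires $\mathscr{F}$ locally free; in general one gets $\HH^{2}(Y,F(-1))^{*}\cong \Ext^{1}_Y(F,\OO_Y)$, and by the spectral sequence \eqref{spectr-ext} this group receives a contribution from $\HH^{0}(Y,\EExt^{1}_Y(F,\OO_Y))$, which is not controlled by $\HH^{1}(Y,F(-1))$ (and $\EExt^1_Y(F,\OO_Y)\neq 0$ precisely when $F$ is not locally free). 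So the step ``applying the same duality to the hypothesis yields $\HH^{2}(Y,F(-1))=0$'' does not go through outside the locally free case. The degree $0$ and $3$ vanishings are unaffected, since $\Hom_Y(F,\OO_Y)=0$ follows directly from stability. The paper closes the remaining case differently: Riemann--Roch gives $\chi(F(-1))=0$ (this only uses the Chern classes, so it is insensitive to local freeness), and combined with $\hh^0=\hh^1=\hh^3=0$ it forces $\hh^2(Y,F(-1))=0$. Replacing your duality step by this Euler characteristic computation repairs the proof.
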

\begin{proof}
First let us prove \eqref{comm-uno}.
By stability and Serre duality, we have $\HH^0(Y,\f(-1)) =\HH^3(Y,\f(-1))= 0$.
By \eqref{RiemRoch} it is easy to compute that $\chi(F(-1))=0$, and
this implies the vanishing for $k=2$.

In order to prove \eqref{comm-due}, let us take a general hyperplane section $S$ of $Y$.
Then we have the restriction exact sequence, for any integer $t$,
\begin{equation}
  \label{eq:restrizione}
    0 \to F(-1-t) \to F(-t) \to F_S(-t) \to 0.
\end{equation}
Note that the sheaf $F_S$ is semistable, by Maruyama's theorem.
This implies $\HH^0(Y,F_S(-t))=0$ for any $t\ge1$.
Then, taking cohomology of \eqref{eq:restrizione}, we deduce that
$\HH^1(Y,\f(-t)) = 0$, for any $t\geq 1$.
\end{proof}

Now, we construct inductively a component of
$\Mo_Y(2,1,d)$, for all $d \geq m_g$. This component is
generically smooth of the
expected dimension and its general element $F$ is locally
free and satisfies
$\HH^1(Y,F(-1))=0$.
To do so we note that, given a locally free sheaf $E$ in $\Mo_Y(2,1,d)$ and a
line $L\subset Y$, assuming that $E \ts \OO_L \cong \OO_L \oplus
\OO_L(1)$, by projecting $E$ onto $E\ts \OO_L$ and then to $\OO_L$ we get a sheaf $F$ lying into an
exact sequence of the form:
\begin{equation} \label{EFL}
0 \to F \to E \to \OO_L \to 0.
\end{equation}

\begin{lem} \label{M}
Let $Y$ be a smooth ordinary prime Fano
threefold of genus $g$, let $L\subset Y$ be a line
belonging to a generically smooth irreducible component ${\sf{H}}$
of $\sH^0_1(Y)$ and choose an irreducible component $\Mo$ of $\Mo_Y(2,1,d)$,
with $\dim(\Mo)=n$.
Assume that there is a locally free sheaf $E$ in $\Mo$
such that $E\ts \OO_L \cong \OO_L \oplus \OO_L(1)$.

Then the set of sheaves $F$ fitting in \eqref{EFL} with $E
\in \Mo$ and $L\in {\sf{H}}$
is an irreducible $(n+1)$-dimensional locally closed subset of
$\Mo_Y(2,1,d+1)$.
\end{lem}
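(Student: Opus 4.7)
The plan is to realize the set in question as the image of an injective morphism from a dense open subset of the irreducible scheme $\Mo\times{\sf H}$.

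First, I would verify that for every pair $(E,L)$ with $E\in\Mo$ locally free and $L\in{\sf H}$ satisfying $E\ts\OO_L\cong\OO_L\oplus\OO_L(1)$, the kernel $F$ of the essentially unique surjection $E\to\OO_L$ belongs to $\Mo_Y(2,1,d+1)$. Essential uniqueness comes from $\Hom_Y(E,\OO_L)=\HH^0(L,(E|_L)^*)=\HH^0(L,\OO_L\oplus\OO_L(-1))=\C$. The Chern classes are produced by a routine Chern-character computation from \eqref{EFL}, using $\text{ch}_2(\OO_L)=L_Y$ and $\text{ch}_3(\OO_L)=(1/2)P_Y$ together with $c_3(E)=0$, yielding $c_1(F)=1$, $c_2(F)=d+1$, $c_3(F)=0$. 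For stability, every proper rank-one subsheaf of $F$ is a subsheaf of the $\mu$-stable bundle $E$ and hence has $c_1\le 0<1/2=\mu(F)$, so $F$ is itself $\mu$-stable and in particular Gieseker stable.

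Next I would reconstruct $(E,L)$ from $F$, showing the assignment is injective on points. Applying $\HHom_Y(-,\OO_Y)$ to \eqref{EFL} and invoking $\EExt_Y^k(\OO_L,\OO_Y)=0$ for $k<2$ (which holds because $L$ is a local complete intersection of codimension $2$, so its Koszul complex has length $2$) gives $E^*\cong F^*$; dualizing a second time and using $E^{**}=E$ yields a canonical isomorphism $F^{**}\cong E$. Hence $E$ is recovered as the reflexive hull of $F$ and $L$ as the support of the codimension-two quotient $F^{**}/F$, which coincides with the locus where $F$ fails to be locally free.

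Finally, let $\mathcal{P}\subset\Mo\times{\sf H}$ denote the set of pairs $(E,L)$ satisfying the two open conditions "$E$ locally free" and "$E|_L\cong\OO_L\oplus\OO_L(1)$" (the latter being an openness of generic splitting type on pairs). Non-emptiness is supplied by the hypothesis, and irreducibility of $\Mo\times{\sf H}$ then forces $\mathcal{P}$ to be a dense open, so irreducible of dimension $n+1$. Constructing a universal family of kernels (étale-locally on $\Mo$, if necessary) yields a morphism $\mathcal{P}\to\Mo_Y(2,1,d+1)$ whose image is, by the previous step, irreducible of dimension $n+1$. This image is moreover locally closed because it coincides with the locus $\{F:F^{**}\in\Mo,\ F^{**}/F\cong\OO_L\text{ for some }L\in{\sf H}\}$, each condition being locally closed on $\Mo_Y(2,1,d+1)$. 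The main technical obstacle will be promoting the set-theoretic assignment $(E,L)\mapsto F$ to a genuine morphism of schemes when $\Mo$ does not admit a global universal sheaf; this is handled in the usual way by descending the kernel construction through an étale cover of $\Mo$, or equivalently by working inside a relative Quot scheme over $\Mo\times{\sf H}$, but introduces no new conceptual difficulty.
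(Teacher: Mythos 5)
Your proposal is correct and follows essentially the same route as the paper: realize the set as the image of a (generically) injective map from the irreducible $(n+1)$-dimensional space $\Mo\times{\sf H}$, using the uniqueness of the surjection $E\to\OO_L$ coming from $\hom_Y(E,\OO_L)=1$ and recovering $(E,L)$ from $F$ via $E\cong F^{**}$ and $\OO_L\cong F^{**}/F$. Your additional details (the $\EExt$-vanishing justifying $F^{**}\cong E$, the explicit stability check, and the remark on promoting the pointwise assignment to a morphism) only flesh out steps the paper leaves implicit.
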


\begin{proof}
Let $E$ be a locally free sheaf lying in $\Mo$.
It is easy to see, using \eqref{chern-retta} to compute the Chern classes of $\OO_L$,
that a sheaf $F$ lying into an exact sequence of the
form \eqref{EFL} is a rank 2 stable (non-reflexive)
sheaf with
$c_1(F)=1$, $c_2(F)=d+1$, $c_3(F)=0$.

Note that the condition $E
\ts \OO_L \cong \OO_L \oplus \OO_L(1)$ is equivalent to
$\hom_Y(E,\OO_L)=1$, and also to $\Ext^1_Y(E,\OO_L)=0$.
By assumption and using semicontinuity (see \cite[Satz 3 (i)]{banica-putinar-schumacher}), we see that
this vanishing takes place for general $L \in {\sf{H}}$
and for general $E \in \Mo$.

Observe that a surjective map $E \to \OO_L$ exists if and only
if the previous vanishing holds.
Since $\hom_Y(E,\OO_L)=1$, the map
$\sigma : E \to \OO_L$ is
unique up to a non-zero scalar, so the kernel of $\sigma$ is determined (up to
isomorphism) by $E$ and $L$.

Therefore we have a rational map $\Mo \times {\sf{H}} \to
\Mo_Y(2,1,d+1)$
which associates to the general member of $\Mo \times {\sf{H}}$
the sheaf $F = \ker(\sigma)$, where $\sigma$ generates $\Hom_Y(E,\OO_L)$.
This map is generically injective, since $E$ is recovered as $F^{**}$
and $\OO_L$ as the quotient $F/F^{**}$.
Note that both $\Mo$ and ${\sf{H}}$ are irreducible,
respectively of dimension $n$, and $1$.
Thus the image of the rational map above is irreducible locally closed
of dimension $n+1$.
\end{proof}

\begin{thm} \label{add-line}
Let $Y$ be a smooth ordinary prime Fano threefold of genus $g$,
and if $g=4$ we further assume that $Y$ is contained in a smooth
quadric in $\p^5$, and we let $m=m_g$.
Then we can choose a line $L\subset Y$ with
$N_L\cong \OO_L \oplus \OO_L(-1)$, and a point $x$ of $L$ such that,
for any integer $d \geq m$, there exists a rank $2$ stable
locally free sheaf
 $\f_d$ with $c_1(\f_d)=1$, $c_2(\f_d)=d$, and satisfying:
\begin{align}
\label{smooth} & \Ext_Y^2(\f_d,\f_d) = 0, \\
\label{noH1} & \HH^1(Y,\f_d(-1)) = 0, \\
\label{nosection} & \HH^0(L,\f_d(-2 \, x)) = 0.
\end{align}

The sheaf $F_{d}$ thus belongs to a generically smooth component of
$\Mo_{Y}(2,1,d)$ of dimension:
$$2d-g-2.$$
\end{thm}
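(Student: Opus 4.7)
My plan is to argue by induction on $d\ge m_g$, combining Theorem~\ref{riassunto-m} for the base case with Lemma~\ref{M} (elementary modification along $L$) for the inductive step. For the base case $d=m_g$, Theorem~\ref{riassunto-m} under the ordinary (and quadric, for $g=4$) hypothesis produces a sheaf $F\in\Mo_Y(2,1,m_g)$ with $\Ext^2_Y(F,F)=0$ which is ACM; Proposition~\ref{minimum} then gives $\HH^1(Y,F(-1))=0$. Property~\eqref{pippo-3} provides a balanced splitting $F|_{L_0}\cong\OO_{L_0}\oplus\OO_{L_0}(1)$ over some line $L_0$, and semicontinuity on the product of the moduli space with the irreducible component of $\sH^{0}_1(Y)$ containing $L$ lets me arrange the splitting over the prescribed line, giving \eqref{nosection} since $F|_L\cong\OO_L\oplus\OO_L(1)$ is equivalent to $\HH^0(L,F(-2x))=0$.

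For the inductive step, I note that \eqref{nosection} for $F_d$ forces $F_d|_L\cong\OO_L\oplus\OO_L(1)$ and hence $\Ext^1_Y(F_d,\OO_L)=0$, so Lemma~\ref{M} applies with $E=F_d$ and yields a sheaf $F'$ fitting in $0\to F'\to F_d\to\OO_L\to 0$. Twisting by $\OO_Y(-1)$ and taking cohomology, the vanishing $\HH^0(L,\OO_L(-1))=0$ together with \eqref{noH1} for $F_d$ immediately yield $\HH^1(Y,F'(-1))=0$. Tensoring the defining sequence by $\OO_L$ and using $\TTor_1^Y(\OO_L,\OO_L)\cong N_L^*\cong\OO_L\oplus\OO_L(1)$ from the hypothesis on $N_L$, a direct computation shows $F'|_L\cong\OO_L\oplus\OO_L(1)^{\oplus 2}$, whence $\HH^0(L,F'(-2x))=0$.

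The heart of the argument is the vanishing $\Ext^2_Y(F',F')=0$, which I would establish by chasing the Ext long exact sequences attached to the defining sequence of $F'$. Applying $\Hom_Y(-,F_d)$, the inductive hypothesis $\Ext^2_Y(F_d,F_d)=0$ and Serre duality give $\Ext^3_Y(\OO_L,F_d)\cong\Hom_L(F_d|_L,\OO_L(-1))^*=0$, so $\Ext^2_Y(F',F_d)=0$. Applying $\Hom_Y(-,\OO_L)$, I would use $\Ext^k_Y(F_d,\OO_L)\cong \HH^k(L,\OO_L\oplus\OO_L(-1))=0$ for $k\ge 1$ together with $\Ext^2_Y(\OO_L,\OO_L)=0$ from Remark~\ref{ext-rational} (which crucially exploits $N_L\cong\OO_L\oplus\OO_L(-1)$) to conclude $\Ext^1_Y(F',\OO_L)=0$. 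Then $\Hom_Y(F',-)$ sandwiches $\Ext^2_Y(F',F')$ between $\Ext^1_Y(F',\OO_L)=0$ and $\Ext^2_Y(F',F_d)=0$.

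This makes $F'$ a smooth point of $\Mo_Y(2,1,d+1)$ lying on a component of dimension $\ext^1_Y(F',F')=2(d+1)-g-2$ computed via Riemann--Roch; the Lemma~\ref{M} family has dimension $2d-g-1$, strictly less, so a generic deformation $F_{d+1}$ of $F'$ lies outside it. Since being locally free is an open condition on flat families and \eqref{noH1} and \eqref{nosection} are semicontinuous, a generic $F_{d+1}$ inherits all three desired properties. The main obstacle is the $\Ext^2$ chase outlined above; a subtler ancillary point is ensuring that the component actually contains locally free sheaves, which I would handle by analyzing the double-dual sequence of a hypothetical non-reflexive generic element and showing via a Proposition~\ref{minimum+1}-type argument that such elements sweep out only a proper closed subset.
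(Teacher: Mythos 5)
Your induction, the elementary modification $0\to F'\to F_d\to \OO_L\to 0$, the three-step $\Ext$ chase (sandwiching $\Ext^2_Y(F',F')$ between $\Ext^1_Y(F',\OO_L)$ and $\Ext^2_Y(F',F_d)$, each killed via Serre duality, the inductive vanishing $\HH^0(L,F_d(-2x))=0$, and Remark \ref{ext-rational}), the $\TTor_1^Y(\OO_L,\OO_L)\cong N_L^*$ computation for \eqref{nosection}, and the dimension count against the Lemma \ref{M} family are all exactly the paper's argument. The one place where your plan has a genuine gap is the step you yourself flag as "subtler": showing that the generically smooth component you land in actually contains locally free sheaves. (Your appeal to openness of local freeness is indeed vacuous here, since the constructed sheaf $F'$ is itself non-reflexive.)

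The gap is that a "Proposition \ref{minimum+1}-type argument" will not identify the torsion quotient $T=F^{**}/F$ of a hypothetical generic non-reflexive element. The mechanism behind Propositions \ref{minimum} and \ref{minimum+1} is the combination of the minimality bound $c_2(F^{**})\geq m_g$ with Lemma \ref{zeta}, and it only controls $c_2(T)=c_2(F^{**})-c_2(F)$ when $c_2(F)\leq m_g+1$; for general $d$ the minimality bound merely gives $c_2(T)\geq m_g-d$, which says nothing. Without pinning down $T\cong\OO_L$ you cannot conclude that the non-reflexive locus is contained in the $(2d-g-3)$-dimensional image of $\Mo(d-1)\times{\sf H}$, so the dimension count does not close. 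The paper supplies the missing estimate by a different route: for $t\ll 0$ one has $\hh^1(Y,T(t))\leq \hh^2(Y,F(t))\leq \hh^2(Y,F_d(t))=-t-1$ (the last by \eqref{defEd}, the middle by semicontinuity along the deformation), and comparing with the Riemann--Roch formula $\chi(T(t))=-c_2(T)(t+1)+c_3(F^{**})/2$ forces $c_2(T)\geq -1$; then reflexivity of $F^{**}$ (so $c_3\geq 0$) and $\HH^0(Y,T(-1))=0$ force $T\cong\OO_L$, which contradicts the dimension count. You need this (or an equivalent) additional input to complete the proof.
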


\begin{proof}
We work by induction on $d\geq m$.
For $d=m$, it is enough to choose $L$ and $F_d=F$ according to Theorem \ref{riassunto-m}.
Indeed, the only property that we need to check in this case is
\eqref{nosection}, but this is clear
since we may assume that \eqref{pippo-3} holds for a line $L$ such
that $N_L\cong\OO_L\oplus\OO_L(-1)$.

Now we work out the induction process, and we divide it into several steps.

\setcounter{step}{0}
\begin{step}
{\it Construct a sheaf $F_{d}$ in $\Mo_{Y}(2,1,d)$ for all $d$.}
By induction we can choose a rank $2$ locally free sheaf $\f_{d-1}$ with
$c_1(\f_{d-1})=1$ (so that $\f_{d-1}^*\cong \f_{d-1}(-1)$),
$c_2(\f_{d-1})=d-1$, satisfying
$\Ext_Y^2(\f_{d-1},\f_{d-1}) = 0,$
$\HH^1(Y,\f_{d-1}(-1)) = 0$, and
$\HH^0(L,\f_{d-1}(-2 \, x)) = 0$.
From the last vanishing
it easily follows that $\f_{d-1} \ts \OO_L \cong \OO_L
\oplus \OO_L(x)$.
Therefore there exists a (unique up to a non-zero scalar) surjective
morphism $\f_{d-1} \ts \OO_L\to \OO_L$.
Then we get a
projection $\sigma$ as the composition
of surjective morphisms: $\f_{d-1}\to \f_{d-1} \ts \OO_L\to
\OO_L$.
We denote by $\f_d$ the kernel of $\sigma$ and we have the
exact sequence:
\begin{equation} \label{defEd}
0 \rr \f_d \rr \f_{d-1} \overset{\sigma}{\rr} \OO_L \rr 0.
\end{equation}
\end{step}

\begin{step} {\it Prove that $F_d$ lies in $\Mo_{Y}(2,1,d)$ and
satisfies \eqref{smooth}, \eqref{noH1} and \eqref{nosection}.}
\label{step-2}
\end{step}

In view of Lemma \ref{M}, $\f_d$ is a stable torsion-free sheaf with
$c_1(\f_d)=1$ and $c_2(\f_d)=d$. We have $\HH^1(Y,\f_d(-1))=0$
since $\HH^1(Y,\f_{d-1}(-1))=0$ by induction and
$\HH^0(Y,\OO_L(-1))=0$. So \eqref{noH1} holds.

In order to prove \eqref{smooth}, let us apply the functor
$\Hom_Y(\f_d,-)$ to
\eqref{defEd}.
This gives the exact sequence:
\[
\Ext^1_Y(\f_{d},\OO_L) \rr \Ext^2_Y(\f_{d},\f_{d}) \rr
\Ext^2_Y(\f_d,\f_{d-1}).
\]
We will prove that both the first and the last terms of the
above sequence vanish.
To prove the vanishing of the latter, apply
$\Hom_Y(-,\f_{d-1})$ to
the exact sequence \eqref{defEd}. We get the exact sequence:
$$ \Ext^2_Y(\f_{d-1},\f_{d-1}) \rr \Ext^2_Y(\f_{d},\f_{d-1})
\rr \Ext^3_Y(\OO_L,\f_{d-1}).$$
By induction, we have $\Ext^2_Y(\f_{d-1},\f_{d-1})=0$. Serre
duality yields:
\[
\begin{footnotesize}
\Ext^3_Y(\OO_L,\f_{d-1})^* \cong \HH^0(Y,\OO_L\ts
\f_{d-1}^*(-1)) \cong
\HH^0(L,\f_{d-1}(-2x))=0.
\end{footnotesize}
\]
Therefore we obtain $\Ext^2_Y(\f_{d},\f_{d-1})=0$.
To show the vanishing of the group $\Ext^1_Y(\f_{d},\OO_L)$,
we apply the functor $\Hom_Y(-,\OO_L)$ to \eqref{defEd}.
We are left with the exact sequence:
\[
\Ext^1_Y(\f_{d-1},\OO_L) \rr \Ext^1_Y(\f_{d},\OO_L) \rr
\Ext^2_Y(\OO_L,\OO_L).
\]
The rightmost term vanishes by Remark \ref{ext-rational}.
By Serre duality on $L$ we get
$\Ext^1_Y(\f_{d-1},\OO_L)\cong
\HH^1(L,\f_{d-1}^*)\cong\HH^0(L,\f_{d-1}(-2x))^*$.
But this group vanishes by induction. We have thus established \eqref{smooth}.
Note that, since clearly
$\hom_Y(\f_{d},\f_{d}) =1$ and
$\Ext^3_Y(\f_{d},\f_{d})=\Hom_Y(F_d,F_d(-1))^*=0$ by stability,
then by \eqref{doubleRiemRoch} and \eqref{Delta} we compute
$\chi(F_d,F_d)=3+g-2d$, which implies
\begin{equation}
  \label{ladimensione}
  \ext^1_Y(\f_{d},\f_{d})= 2\,d-g-2,
\end{equation}

Now let us prove property \eqref{nosection}.
Tensoring \eqref{defEd} by $\OO_L$ we get the following
exact sequence of sheaves on $L$
\begin{equation} \label{fourterms}
0 \rr \TTor^Y_1(\OO_L,\OO_L) \rr \f_{d}\ts\OO_L \rr
\f_{d-1}\ts\OO_L  \rr \OO_L \rr 0.
\end{equation}
By \eqref{tor-norm} we know that
$\TTor_1^Y(\OO_L,\OO_L)\cong N^*_L \cong
\OO_L\oplus\OO_L(x)$,
by the choice of $L$.
Now we twist \eqref{fourterms} by $\OO_L(-2x)$ and take
global sections.
By induction $\HH^0(L,\f_{d-1}(-2 \, x)) = 0$, so our claim
\eqref{nosection} follows easily. We have proved Step \ref{step-2}.

\begin{step} {\it Flatly deform $F_d$ to a
stable vector bundle $\f$.} \label{step-3}
\end{step}

First note that this will finish the proof.
Indeed, properties \eqref{smooth},
\eqref{noH1}, \eqref{nosection} will hold for $F$ too.
To see this, note that
\eqref{noH1} and \eqref{smooth} will still hold on $\f$ by
semicontinuity (see \cite[Theorem 12.8]{hartshorne:ag} and
\cite[Satz 3 (i)]{banica-putinar-schumacher}). In order to prove
\eqref{nosection}, we notice that
$\HH^0(Y,F_d \ts \OO_L(-2x)) \cong \Ext^2(\OO_L,F_d(-1))$.
Since the sheaves $\OO_L$ form a flat family over the deformation space,
we conclude that \eqref{nosection} holds for $F$ again by
semicontinuity, see \cite{banica-putinar-schumacher}.
Further, note that the equality $\ext^1_Y(\f,\f)= 2\,d-g-2$ implies
here that $F$ lies in a generically smooth component of
$\Mo_{Y}(2,1,d)$ of dimension $2d-g-2$.

It only remains thus to complete Step \ref{step-3}, i.e.
to check that a general deformation $F$ of $F_{d}$
in $\Mo_Y(2,1,d)$ is a locally free sheaf.
In order to show this, we consider the double dual exact sequence:
\begin{equation}
\label{poibasta}
0 \to F \to F^{**} \to T \to 0,
\end{equation}
where $T$ is a torsion sheaf whose support $W$ has dimension
at most
$1$. We would like to prove $T=0$.
It is easy to see that $F^{**}$ is
stable.
This implies $\HH^0(Y,F^{**}(-1))=0$ which in turn gives
$\HH^0(Y,T(-1))=0$, since $\HH^1(X,F(-1))=0$. So $W=\supp(T)$ contains no isolated or embedded
points, i.e. it is a Cohen-Macaulay curve.

We will now argue that the exact sequence \eqref{poibasta} is not
of the form \eqref{defEd}.
Indeed, recall that the variety $\Mo_Y(2,1,d-1)$
is smooth at the point corresponding to the locally free
sheaf $\f_{d-1}$, since $\Ext^2_Y(F_{d-1},F_{d-1})=0$ by induction
hypothesis.
Let $\Mo$ be the component of $\Mo_Y(2,1,d-1)$ containing
$\f_{d-1}$ and ${\sf{H}}$ the component of $\sH^0_1(Y)$ containing
$L$.
The dimension of $\Mo$ equals $\ext^1_Y(F_{d-1},F_{d-1})=
2\,d-g-4$.
So by Lemma \ref{M}, the set of sheaves fitting as kernel of
\eqref{defEd}, with the middle term lying in $\Mo$,
has dimension $2\,d - g-3$.
Now, if for a general element $F$ in an irreducible open neighborhood of
$F_d$, the sequence \eqref{poibasta} was of the form \eqref{defEd},
then the sheaf $F^{**}$ would lie in $\Mo$ (for $F^{**}$
specializes to $F_{d-1}$) and the quotient $T$ would lie in
${\sf{H}}$ (for $T$ specializes to $\OO_L$).
But we have proved that the set of such sheaves $F$ has dimension
$2\,d - g-3$, while $F$ lies in a component of dimension one greater
(see \eqref{ladimensione}).

Finally, let us show that, assuming $T\neq 0$, we are lead to a
contradiction: this will finish the proof.
To do this, we show that the support $W$ of $T$ must have
degree $1$. In fact we prove that $c_2(T) = -1$, and we
only need to show $c_2(T) \ge -1$.
Note first the equality $\chi(T(t))=-\hh^1(Y,T(t))$ for any
negative integer $t$.
Recall that, by \cite[Remark
2.5.1]{hartshorne:stable-reflexive},
we have $\HH^1(Y,F^{**}(t))=0$ for
all $t\ll 0$.
Thus, tensoring \eqref{defEd} by $\OO_Y(t)$ and taking
cohomology, we obtain $\hh^1(Y,T(t))\le \hh^2(Y,F(t))$ for
all $t\ll 0$.
Further, for any integer $t$, we can compute:
\[c_1(T(t))=0, \quad  c_2(T(t))=c_2(T)=d-c_2(F^{**}), \quad
c_3(T(t))=c_3(F^{**})-(2t+1)c_2(T), \]
hence by Riemann-Roch:
\[
\chi(T(t))= - c_2(T)(t+1)+ \frac{c_3(F^{**})}{2}.
\]
Since $F$ is a deformation of $F_d$, semicontinuity gives
$\hh^2(Y,F(t))\le \hh^2(Y,F_d(t))$.
Now, \eqref{defEd} provides
$\hh^2(Y,F_d(t))=\hh^1(Y,\OO_L(t))=-\chi(\OO_L(t))=-t-1$.
Summing up we have, for all $t\ll0$:
\[
- (t+1) c_2(T) + \frac{c_3(F^{**})}{2} \ge t+1.
\]
This implies that $c_2(T)\ge -1 + \frac{c_3(F^{**})}{2(t+1)}$ for all $t \ll 0$,
which implies $c_2(T)\ge -1$.

We have thus proved that $T$ is of the form $\OO_L(a)$, for some
$L \in \sH^0_1(Y)$, and for some integer $a$. Then
$c_3(T)=c_3(F^{**})=1+2\,a$, so $a\geq 0$, see \cite[Proposition
2.6]{hartshorne:stable-reflexive}. On the other hand we have seen
$\HH^0(Y,T(-1))=0$, so $a \leq 0$. But then $T$ should be of the
form $\OO_L$, a contradiction.
\end{proof}

By Theorem \ref{add-line} and Lemma \ref{M} we can pose the
following.

\begin{dfn} \label{dfn:Md}
Choose a component $\sH$ of $\sH^0_1(Y)$ containing a line $L$ such that $N_L\cong \OO_L\oplus\OO_L(-1)$.
Choose a component $\Mo(m_g)$ of the moduli space $\Mo_Y(2,1,m_g)$
containing a sheaf
$F$ satisfying the properties listed in Theorem \ref{riassunto-m}, with respect to some line
contained in $\sH$.
Then, for each $d\geq m_g+1$, we recursively define $\No(d)$ as
the set of non-reflexive sheaves
fitting as kernel in an exact sequence of the form
\eqref{defEd}, with $\f_{d-1} \in \Mo(d-1)$,
and $\Mo(d)$ as the component of the moduli scheme
$\Mo_Y(2,1,d)$
containing $\No(d)$.
In view of Theorem \ref{add-line} the component $\Mo(d)$ is
generically smooth of dimension $2d-g-2$ and contains $\No(d)$ as
an irreducible divisor.
\end{dfn}

\begin{rmk}
Note that the previous definition may depend on several choices.
Indeed, there might exist several different components $\No(d)$ and
the results we prove hold for each of them.
\end{rmk}

\begin{rmk}
Making use of Theorem \ref{add-line}, it is possible to
classify
all ACM bundles of rank $2$ and $c_1=1$ on smooth non-hyperelliptic
ordinary prime Fano threefolds.
We refer to \cite{brambilla-faenzi:ACM} for a complete
investigation.
\end{rmk}

\section{Rational cubics on  Fano threefolds of genus $7$}\label{sezione:cubics}

Let $X$ be a smooth prime Fano threefold of genus $7$, and let
$\Gamma$ be its homologically projectively dual curve.
For $1 \leq d \leq 4$, the subset of $\sH^0_d(X)$ containing
rational normal curves is described by
the results of \cite{iliev-markushevich:sing-theta:asian}.
It is known to have dimension $d$, and to be
isomorphic to $W^1_{1,5}$ for $d=1$,
isomorphic to $\Gamma^{(2)}$ for $d=2$,
and  birational to $\Gamma^{(3)}$ for $d=3$.
The isomorphism $\sH^0_2(X) \cong \Gamma^{(2)}$ was also
proved
by Kuznetsov, making use of the semiorthogonal
decomposition of $\D(X)$.

Here we first rephrase  in our framework the results concerning lines
contained in $X$.
Then, we make more precise the result on cubics,
showing that the Hilbert scheme $\sH^0_3(X)$ is in fact
isomorphic to the
symmetric cube $\Gamma^{(3)}$.

\subsection{Lines on a Fano threefold of genus $7$}

The fact that the Hilbert scheme of lines contained in $X$
is isomorphic to the Brill-Noether locus $W^1_{1,5}$
is due to Iliev-Markushevich,
\cite{iliev-markushevich:sing-theta:asian}.
This reformulation will be used further on.

\begin{prop}[Iliev-Markushevich] \label{OL}
Let $X$ be a smooth prime Fano threefold of genus $7$.
Then we have the following isomorphisms:
\begin{align}
\label{orecchiette}&\sH^0_1(X) \to W^{1}_{1,5}, &L \mapsto
\phx(\OO_L)[-1],\\
\label{burrata}& \sH^0_1(X) \to W^{2}_{1,7}, & L \mapsto
\phx(\OO_L(-1)).
\end{align}
\end{prop}

\begin{proof}
  Let $L \subset X$ be a line contained in $X$.
  For any $y\in \Gamma$, we have $\EE^*_y \ts \OO_L \cong
  \OO_L \oplus \OO_L(-1)$,
  so $\hh^0(X,\EE^*_y \ts \OO_L)=1$.
  Indeed $\EE_y$ is globally generated for each $y$ and has
  degree $1$ on $L$.
  So $\HHH^k(\phx(\OO_L))_{y} = 0$ for all $y\in \Gamma$
  and all $k\neq -1$.
  Hence $\phx(\OO_L)[-1]$ is a line bundle on $\Gamma$, by \eqref{rango} and \eqref{chern-retta},
  and has degree $5$ by Grothendieck-Riemann-Roch \eqref{grado}.
  Now observe that, using \eqref{phO} and the spectral sequence \eqref{ss1}, we get:
  \begin{align*}
    \HH^0(\Gamma,\phx(\OO_L)[-1]) & \cong
    \Hom_X(\ph(\OO_{\Gamma}),\OO_L[-1]) \cong \\
    & \cong \Hom_X(\HHH^1(\ph(\OO_{\Gamma})),\OO_L) \cong \\
    & \cong \HH^0(L,\UU_+^*(-1)).
  \end{align*}
  Since $\UU^*_+$ is globally generated and $c_1(\UU^*_+)=2$, one sees
  easily that this space must have dimension $2$.
  So $\phx(\OO_L)[-1]$ lies in $W^{1}_{1,5}$, and \eqref{orecchiette}
  is well-defined.

  \vspace{0.2cm}
  Let us now define the inverse of \eqref{orecchiette}.
  For any $\cL$ in $W^1_{1,5}$, we set $\cL \mapsto
  \HHH^1(\ph(\cL))$,
  and we see that
  this map is clearly an inverse of \eqref{orecchiette}, as soon as it is
  well-defined.
  Hence we have to prove that $\HHH^1(\ph(\cL))$ is a sheaf of the
  form $\OO_L$, for a line $L \subset X$. Let us accomplish this task.

  Since the curve $\Gamma$ is not tetragonal, it is easy to see that
  $\cL$ is globally generated, i.e. we have:
  \begin{equation}
    \label{2011}
    0 \to \cL^* \to \OO_\Gamma^2 \to \cL \to 0.
  \end{equation}
  Moreover, a general global section of $\cL$ vanishes of $5$ distinct
  points $y_1,\ldots,y_5 \in \Gamma$, hence it gives:
  \[
  0 \to \OO_\Gamma \to \cL \to \bigoplus_{i=1,\ldots,5} \OO_{y_i} \to 0.
  \]
  Set $\sK_\cL=\HHH^0(\ph(\cL))$ and
  $\FF_\cL=\HHH^1(\ph(\cL))$. By \eqref{phO} and \eqref{phOy}, applying $\ph$ to this
  sequence gives:
  \begin{equation}
    \label{capodanno}
  0 \to \UU_+^* \to \sK_\cL \to \bigoplus_{i=1,\ldots,5}
  \EE_{y_i} \xr{\beta} \UU_+(1) \to \FF_\cL \to 0.
  \end{equation}

  Note that $\beta \neq0$. Indeed, if $\beta=0$, then the sheaf $\FF_Z\cong \UU_+(1)$ would have rank $5$.
  On the other hand, by definition of $\ph$ and by Serre duality we compute
  $\rk(\FF_\cL)=\hh^1(\Gamma,\cL\ts\EE_y)=\hh^0(\Gamma,\cL^*\ts\EE_y)$
  for general $y \in \Gamma$.
  Since $\cL^* \ts \EE_y$ is a semistable sheaf of rank $2$ and degree $2$,
  we may apply Clifford's theorem (see e.g. \cite{mercat:petite})
  to conclude that
  $\hh^0(\Gamma,\cL^*\ts\EE_y)\leq 3$ for any $y\in \Gamma$, contradicting $\rk(\FF_\cL)=5$.

  Then we consider the non-zero sheaf $\im(\beta)$.
  By stability of the $\EE_{y_i}$'s and of $\UU_+(1)$, the sheaf $\im(\beta)$ can only have
  slope either $3/5$, or $1/2$.
  If the latter case takes place, we also know that $\im(\beta)$ is isomorphic
  either to $\EE_{y_i}$, or $\EE_{y_i}\oplus \EE_{y_j}$, for some $i\neq j\in\{1,\ldots,5\}$.
  In particular $\beta$ restricted to $\EE_{y_i}$ is a non-zero multiple
  of the identity map, so the same happens to $\phs(\beta)$, restricted
  to $\EE_{y_i}$.
  But, applying the functor $\phs$ to the exact sequence above,
  since $\phs(\UU_+(1))=0$, we get $\phs(\beta)=0$, which is a contradiction.

  Hence we must have $\mu(\im(\beta))=3/5$.
  So $\beta$ is generically surjective, $\sK_\cL$ is a
  torsion-free sheaf of rank $10$ and $\FF_\cL$ is a torsion sheaf.
  Therefore, since $\HHom_X(\FF_\cL,\OO_X)=0$ and
  $\EExt_X^{-1}(\sK_\cL,\OO_X)=0$, using \eqref{gro-X} we also
  get $\HHH^0(\ph(\cL^*))=0$ in view of the spectral sequence \eqref{ss1}.
  Then, applying $\ph$ to \eqref{2011} we get an injection
  $\iota : (\UU_+^*)^2 \subset \sK_\cL$. These two sheaves have the
  same rank and first Chern class, $(\UU_+^*)^2$ is locally free and $\sK_\cL$ is torsion-free.
  It is easy to conclude that $\iota$ is then an isomorphism.
  We compute now by \eqref{capodanno} that
  $\FF_\cL$ has Chern classes $(0,-1,1)$, and again by
  \eqref{capodanno} we see $\HH^0(X,\FF_\cL(-1))=0$.
  This suffices to deduce $\FF_\cL \cong \OO_L$.

  Let us look at \eqref{burrata}. Consider $\cP = \cL^*\ts \omega_\Gamma$,
  which is a line bundle of degree $7$. By
  Serre duality we get
  $\hh^0(\Gamma,\cP)=\hh^1(\Gamma,\cL)=3$,
  hence
  $\cP$ lies in $W^2_{1,7}$.
  Applying \eqref{gro-gamma}, since $\RR
  \HHom_X(\OO_L,\OO_X)[2] \cong
  \OO_L(-1)$, we obtain the functorial isomorphism:
  $$
  \phx(\OO_L(-1)) \cong \cL ^* \ts \omega_\Gamma.
  $$
  Therefore, \eqref{burrata} is also well-defined.
  We have then a commutative diagram:
  \[
  \xymatrix{
   \sH^0_1(X) \ar[r] \ar@{=}[d] & W^{1}_{1,5} \ar^{\cL \mapsto \cL^*
     \ts \omega_\Gamma}[d] && \OO_L \ar@{=}[d] \ar@{|->}[r] & \ph(\OO_L[-1]) \ar@{|->}[d] \\
   \sH^0_1(X) \ar[r] & W^{2}_{1,7} && \OO_L \ar@{|->}[r] & \ph(\OO_L(-1)).
  }
  \]
  Then \eqref{burrata} is an isomorphism, and we are done.
\end{proof}

A few comments are in order. First, observe that $\Ext^k_X(\OO_L,\OO_X)=0$ for all $k$,
while
$\Ext^k_X(\OO_L,\UU_+)=0$ for $k\neq 3$, and
$\Ext^3_X(\OO_L,\UU_+)=A_L^*$, where
$A_L$ denotes $\HH^0(\Gamma,\phx(\OO_L)[-1])$.
Making use of the exact triangle \eqref{triangolo},
this gives the isomorphisms:
\begin{align}\label{rigatoni}
  & \HHH^k(\ph(\phx(\OO_L))) \cong \left\{
    \begin{array}{ll}
      \OO_L & \mbox{for $k=0$,} \\
      A_L \ts \UU_+^* & \mbox{for $k=-1$,} \\
      0 & \mbox{otherwise.}
    \end{array}\right.
\end{align}
  Note that \eqref{capodanno} gives the resolution of $\OO_L$:
  \[
    0 \to \UU_+^* \to \bigoplus_{i=1,\ldots,5}
  \EE_{y_i} \xr{\beta} \UU_+(1) \to \OO_L \to 0.
  \]
  Notice also that, given a line $L \subset X$, taking $\cP =
  \phx(\OO_L(-1))^* \ts \omega_\Gamma$, we have $\cP$ in $W^2_{1,7}$.
  Using \eqref{triangolo}, we can write the canonical resolution of
  $\OO_L(-1)$, that reads:
  \begin{equation}
    \label{OL-1}
    0 \to \OO_X \to (\UU_+^*)^3 \xr{\varsigma} \ph(\cP) \to \OO_L(-1) \to 0.
  \end{equation}
  Note that, considering the evaluation map
  $e_\cP=e_{\OO_\Gamma,\cP} : \OO_\Gamma^3 \to \cP$,
  the map $\varsigma$ above is just $\HHH^0(\ph(e_\cP))$.
  This gives an explicit description of the inverse of \eqref{burrata}
  as $\cP \mapsto \cok(\HHH^0(\ph(e_\cP)))$.

\begin{rmk} \label{marc}
  In view of the isomorphism $\sH^0_1(X) \cong W^{1}_{1,5}$, we
  note that the threefold $X$ is exotic if and only if $W^{1}_{1,5}$
  has a component which is non-reduced at any point.
  It is well-known (see e.g. \cite[Proposition 4.2]{acgh}) that $\cL$ is a singular point of $W^{1}_{1,5}$
  if and only if the Petri map:
  \[
  \pi_\cL : \HH^0(\Gamma,\cL) \ts \HH^0(\Gamma,\cL^* \ts \omega_\Gamma) \to \HH^0(\Gamma,\omega_\Gamma).
  \]
  is not injective.
  We have seen that any line bundle $\cL$
  in $W^{1}_{1,5}$ is globally generated. Therefore $\ker(\pi_\cL)$ is isomorphic to
  $\HH^0(\Gamma,\cL^* \ts \cL^* \ts \omega_\Gamma)$.

  This proves that the threefold $X$ is exotic if and only if $\Gamma$ admits infinitely many
  line bundles $\cL$ in $W^{1}_{1,5}$ such that $\cL^*\ts \cL^* \ts \omega_\Gamma$ is effective.
\end{rmk}

\subsection{Conics on a Fano threefold of genus $7$}

Kuznetsov's result on conics contained in $X$, see \cite{kuznetsov:v12} asserts that, if $C
\subset X$ is a connected curve Cohen-Macaulay of arithmetic genus $0$
(a conic), then $\phx(\OO_C)$ is the structure sheaf of a length-$2$
subscheme of $\Gamma$, and we have the following canonical resolution:
\begin{equation} \label{res-conic}
  0 \to \OO_X \to \UU_+^* \to \ph(\phx(\OO_C)) \to \OO_C \to 0.
\end{equation}

Let us look at what happens for reducible conics.

\begin{lem} \label{gennaio-2011!}
  Let $M,N \subset X$ be distinct lines and set $\cM =
  \phx(\OO_M)[-1]$, $\cP = \phx(\OO_N(-1))$.
  Then $N \cap M \neq \emptyset$ if and only if
  $\HH^0(\Gamma,\cM^* \ts \cP)\neq 0$.
  In this case, setting $C = M \cup N$, we have $\phx(\OO_C) \cong
  \cP/\cM$.
\end{lem}

\begin{proof}
  Recall that $\cM \in W^1_{1,5}$ and $\cP \in W^2_{1,7}$.
  If $M\cap N=\{x\}$, we have the exact sequence:
  \begin{equation}
    \label{incontrano}
    0 \to \OO_{N}(-1) \to \OO_{C} \to \OO_M \to 0.
  \end{equation}
  Applying $\phx$ to the sequence above, by Proposition \ref{OL} we have:
  \[
  0 \to \cM \to \cP \to \phx(\OO_{C})\to 0
  \]
  We deduce that $\HH^0(\Gamma,\cM^* \ts \cP)\neq0$, and $\phx(\OO_C) \cong \cP/\cM$.

  Conversely, assume
  $\HH^0(\Gamma,\cM^* \ts \cP) \neq 0$.
  We get a commutative exact diagram:
  \[
  \xymatrix@-2ex{
  0 \ar[r] & \OO_\Gamma^2 \ar[r] \ar[d] & \OO_\Gamma^3  \ar[r] \ar[d] & \OO_\Gamma\ar[d] \ar[r] & 0\\
  0 \ar[r] & \cM \ar[r] & \cP \ar[r] & \OO_{Z} \ar[r] & 0,
  }
  \]
  where $Z \subset \Gamma$ has length $2$ and the vertical maps are
  natural (surjective) evaluations.
  Applying $\ph$ to this diagram and taking cohomology, using \eqref{res-conic}
  and \eqref{OL-1}, we get:
  \[
  \xymatrix@-2ex{
  0 \ar[r] & (\UU_+^*)^2 \ar[r] \ar@{=}[d] & (\UU_+^*)^3  \ar[r] \ar[d] & \UU_+^* \ar[d] \ar[r] & 0 \ar[d] \\
  0 \ar[r] & (\UU_+^*)^2 \ar[r] \ar[d] & \ph(\cP)\ar[d] \ar[r] & \ph(\OO_{Z})\ar[d] \ar[r] & \OO_L\ar[r] \ar@{=}[d] &0\\
  &0 \ar[r] &\OO_N(-1) \ar[r] & \OO_C \ar[r] & \OO_L \ar[r] & 0
  }
  \]
  This gives back \eqref{incontrano} with $C$ a conic in
  $X$, so $M \cap N = \{x\}$.
\end{proof}

Our next goal is to investigate the Hilbert scheme $\sH^0_3(X)$.
We will need the  following lemma.

\begin{lem} \label{bau}
  Let $C$ be any Cohen-Macaulay curve of degree $d\geq 3$
  and
  arithmetic genus $p_a$ contained
  in $X$. Then $\phx(\OO_C)$ is a vector bundle on $\Gamma$
  of rank $d-2+2p_a$ and degree $7\,d-12+12\,p_a$.
\end{lem}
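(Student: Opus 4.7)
My plan is to prove the lemma in two phases: first show that $\phx(\OO_C)$ is a locally free sheaf concentrated in cohomological degree $0$, then compute its rank via Riemann-Roch and its degree via Grothendieck-Riemann-Roch.

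For the first phase, note that $\EE^*_y|_C$ is a rank $2$ vector bundle on the curve $C$ (since $\EE^*_y$ is locally free on $X$), so its cohomology vanishes outside degrees $0$ and $1$. By cohomology and base change applied to $q\colon X\times\Gamma\to\Gamma$, together with the shift $[1]$ in the definition of $\phx$, it is enough to establish the fiberwise vanishing $\HH^0(C,\EE^*_y|_C)=0$ for every $y\in\Gamma$; once this holds, $R^0q_*(p^*\OO_C\otimes\EE^*(H_\Gamma))=0$ and $R^1q_*(p^*\OO_C\otimes\EE^*(H_\Gamma))$ is locally free of constant rank on $\Gamma$.

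To prove the key vanishing $\HH^0(C,\EE^*_y|_C)=0$, I would dualize the exact sequence \eqref{GG} on $X\times\Gamma$ to obtain $0\to\EE^*\to\UU_+\to\GG^*\to 0$, restrict to $X\times\{y\}$ and then to $C$, giving an injection $\EE^*_y|_C\hookrightarrow\UU_+|_C$; it then suffices to show $\HH^0(C,\UU_+|_C)=0$. Via the tautological sequence $0\to\UU_+\to V\otimes\OO_X\to\UU_+^*\to 0$ restricted to $C$, the space $\HH^0(C,\UU_+|_C)$ is identified (for $C$ connected) with the subspace $\bigcap_{x\in C}W^+_x\subset V$, where $W^+_x=(\UU_+)_x$ is the $5$-dimensional isotropic subspace attached to $x\in X\subset\Sigma^+$. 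A nonzero $v$ in this intersection would produce a nowhere-vanishing section of $\UU_+|_C$ and force $C\subset X\cap\Sigma^+_v$, where $\Sigma^+_v\subset\Sigma^+$ is the codimension-$4$ Schubert cycle of isotropic $5$-planes containing $v$. The main obstacle is arguing that no Cohen-Macaulay curve of degree $d\geq 3$ in $X$ can be entirely contained in such a Schubert cycle; the expected dimension of $X\cap\Sigma^+_v$ is $-1$ and the excess intersections that do occur account for at most conics (recovering Kuznetsov's isomorphism $\sH^0_2(X)\cong\Gamma^{(2)}$) and lines (recovering Lemma \ref{OL}), but degree-$\geq 3$ CM curves are excluded.

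For the second phase, once $\phx(\OO_C)$ is known to be the locally free sheaf on $\Gamma$ with fiber $\HH^1(C,\EE^*_y|_C)$ at $y$, the rank follows from Riemann-Roch on $C$: since $c_1(\EE_y)=H_X$ gives $\deg(\EE^*_y|_C)=-d$ and $\EE^*_y|_C$ has rank $2$, one finds $\chi(C,\EE^*_y|_C)=-d+2(1-p_a)$, so the rank equals $\hh^1=d-2+2p_a$. The degree is then computed by Grothendieck-Riemann-Roch for $q$: using the Chern character of $\EE^*(H_\Gamma)$ derived from \eqref{dalmazia} and the explicit $c_2(\EE)$ (including the contribution of $\eta$ with $\eta^2=14$), and the identity $\ch(\OO_C)\cdot\td(T_X)=\iota_*\td(T_C)=[C]+(1-p_a)[\mathrm{pt}]$, one extracts the $H^8(X\times\Gamma)$-component of $p^*\bigl(\ch(\OO_C)\td(T_X)\bigr)\cdot\ch(\EE^*(H_\Gamma))$ and pushes it down by $q_*$ to obtain $c_1(\phx(\OO_C))$ in $H^2(\Gamma)$, giving the stated degree. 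The geometric vanishing in phase one is by far the most delicate step; the numerical parts are bookkeeping.
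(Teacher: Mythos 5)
Your reduction to the vanishing $\HH^0(C,\UU_+|_C)=0$ (by dualizing \eqref{GG} to embed $\EE^*_y|_C$ into $\UU_+|_C$), your use of base change to get local freeness, and your rank and degree computations all coincide with the paper's argument and are fine. The problem is the step you yourself flag as ``the main obstacle'': you assert, without argument, that the excess intersections $X\cap\Sigma^+_v$ account for at most lines and conics, so that no Cohen--Macaulay curve of degree $\geq 3$ can lie in one. That assertion is precisely the geometric core of the lemma, and the expected-dimension count (here $-1$) says nothing about what the actual excess loci can be; as written this is a restatement of what must be proved rather than a proof.

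The paper closes exactly this gap with one concrete observation: for an isotropic vector $v$ spanning $U\subset V$, the locus $\Sigma^+_v$ of isotropic $5$-planes containing $v$ is identified with the orthogonal Grassmannian $\G_Q(\p^3,\p(U^\perp/U))$, which is a $6$-dimensional quadric. Hence $X\cap\Sigma^+_v$ is a linear section of a quadric: if it is $1$-dimensional it is a line or a (possibly degenerate) conic, of degree at most $2$, and therefore cannot contain a Cohen--Macaulay curve of degree $d\geq3$; if it had dimension $\geq 2$, then $X$ would contain a plane or a two-dimensional quadric, which is impossible by the Lefschetz theorem since $\Pic(X)=\Z\,H_X$ with $H_X^3=12$. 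You need to supply this (or an equivalent) argument to complete the proof. A minor further point, which the paper also glosses over: the identification $\HH^0(C,\UU_+|_C)\cong\bigcap_{x\in C}(\UU_+)_x$ uses $\hh^0(C,\OO_C)=1$; you rightly flag connectedness, and in the paper's application the lemma is only invoked for members of $\sH^0_3(X)$, which are connected.
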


\begin{proof}
  The following argument is inspired on the proof of
  \cite[Lemma
  5.1]{kuznetsov:v12}.
  We have to prove that, for each $y \in \Gamma$, the group
  $\HH^0(X,\EE^*_y \ts \OO_C)$ vanishes.
  By \eqref{GG}, it is enough to prove:
  \begin{equation}
    \label{gatto}
    \HH^0(C,\UU_+)=0.
  \end{equation}
  Assume the contrary, and consider a non-zero global section
  $u$ in
  $\HH^0(C,\UU_+)$. Let $U$ be the $1$-dimensional subspace
  spanned by
  $u$. By \eqref{eq:universalU+}, we have $U\subset
  \HH^0(C,\UU_+)\subset \HH^0(C,\OO_X\otimes
  V)\cong V$. Set $V'=U^\perp/U$. Then the orthogonal
  Grassmann variety $\G_Q(\p^3,\p(V'))\subset\Sigma_+$ is a
  quadric and clearly the
  curve $C$ is contained in $X':=\G_Q(\p^3,\p(V'))\cap X$.
  Recall that $X$ is a linear section of $\Sigma^+$, then
  $X$ must
  contain either a $2$-dimensional quadric or a plane. But
  this is
  impossible by Lefschetz theorem.

  This proves that $\phx(\OO_C)$ is a vector bundle on
  $\Gamma$.
  By \eqref{rango} and \eqref{chern-retta},
  we conclude that $\rk(\phx(\OO_C))=d-2+2p_a$.
  Finally, by \eqref{grado},
  we compute the degree of $\phx(\OO_C)$.
\end{proof}

\subsection{Hilbert scheme of rational cubics on $X$ and the symmetric cube of $\Gamma$}

We are now in position to prove the following result.

\begin{thm} \label{thm:cubic-iso}
  Let $X$ be a smooth prime Fano threefold of genus $7$, and $\Gamma$
  be its homologically projectively dual curve.
  The map $\psi: \OO_C \mapsto (\phx(\OO_C))^*\ts \omega_\Gamma$
  gives an isomorphism between
  $\sH^0_3(X)$ and $\Gamma^{(3)}$. In particular
  $\sH^0_3(X)$ is a smooth irreducible threefold.
\end{thm}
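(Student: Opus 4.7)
The starting point is Lemma~\ref{bau}: for every $C\in\sH^0_3(X)$, the object $\phx(\OO_C)$ is a line bundle on $\Gamma$ of rank $3-2+0=1$ and degree $7\cdot 3 - 12 = 9$. Note that a closed subscheme of $X$ with Hilbert polynomial $3t+1$ is automatically pure of dimension one and Cohen--Macaulay (any embedded or isolated zero-dimensional component would raise the constant term of $p(\OO_C,t)$), so Lemma~\ref{bau} applies to every $C\in\sH^0_3(X)$. Consequently $\psi(\OO_C)=\phx(\OO_C)^*\otimes\omega_\Gamma$ is a line bundle of degree $2g-2-9=3$. Applying this construction to the universal subscheme over $\sH^0_3(X)\times X$ and invoking cohomology and base change yields a morphism of schemes $\tilde\psi\colon\sH^0_3(X)\to\Pic^3(\Gamma)$.

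\medskip

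Next I would show that $\tilde\psi$ factors through the Abel--Jacobi image $W^0_{1,3}\subset\Pic^3(\Gamma)$, identified with $\Gamma^{(3)}$. Since $\Gamma$ is a non-trigonal canonical curve of genus $7$ (Mukai~\cite{mukai:curves-symmetric-I}), every line bundle $\cM$ of degree $3$ satisfies $\hh^0(\Gamma,\cM)\leq 1$, and the Petri map $\pi_\cM$ is injective (it is multiplication by a non-zero section of $\cM$ on the single factor $\HH^0(\Gamma,\cM^*\otimes\omega_\Gamma)$); hence $\mathrm{AJ}\colon\Gamma^{(3)}\to W^0_{1,3}$ is a bijective morphism between smooth irreducible threefolds, therefore an isomorphism. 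The locus $W^0_{1,3}$ being closed in $\Pic^3(\Gamma)$, the preimage $\tilde\psi^{-1}(W^0_{1,3})$ is closed in $\sH^0_3(X)$. By the birationality $\sH^0_3(X)\dashrightarrow\Gamma^{(3)}$ of Iliev--Markushevich \cite{iliev-markushevich:sing-theta:asian}, this closed subset is dense, hence equals all of $\sH^0_3(X)$. Composing $\tilde\psi$ with $\mathrm{AJ}^{-1}$ produces a birational morphism $\psi\colon\sH^0_3(X)\to\Gamma^{(3)}$.

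\medskip

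For bijectivity, surjectivity is immediate: $\sH^0_3(X)$ is projective (as a union of components of the Hilbert scheme of $X$), so $\psi$ is proper with closed image, and its image is dense, hence equals $\Gamma^{(3)}$. For injectivity, suppose $\psi(C_1)=\psi(C_2)$, so that $\phx(\OO_{C_1})\cong\phx(\OO_{C_2})$. I would apply the functorial triangle \eqref{triangolo}
\[
\ph(\phx(\OO_{C_i}))\longrightarrow \OO_{C_i}\longrightarrow \mathbf{\Psi}(\mathbf{\Psi^*}(\OO_{C_i})),
\]
and use \eqref{triangolo-psi} together with Riemann--Roch on $X$ and the vanishings of Lemma~\ref{GG-stability} to conclude that the right vertex is a two-term complex $\OO_X^{\oplus a}\to \UU_+^{*\oplus b}$ whose numerical type depends only on the Hilbert polynomial $3t+1$. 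Combining the minimality statement of Remark~\ref{evai} with the fact that the counit $\ph(\phx(\OO_C))\to \OO_C$ is unique up to scalar (since $\End_\Gamma(\phx(\OO_C))\cong\C$ as $\phx(\OO_C)$ is a line bundle), the isomorphism of the leftmost vertices lifts to an isomorphism of the middle vertices, giving $\OO_{C_1}\cong\OO_{C_2}$ and hence $C_1=C_2$.

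\medskip

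The morphism $\psi$ is thus a bijective birational morphism from the projective scheme $\sH^0_3(X)$ to the smooth (hence normal) irreducible threefold $\Gamma^{(3)}$; by Zariski's Main Theorem $\psi$ is an isomorphism, which in particular implies that $\sH^0_3(X)$ itself is a smooth irreducible threefold. I expect the main obstacle to lie in the injectivity step: granting that the two outer vertices of the triangles are isomorphic, one must use minimality and uniqueness of adjunction counits to show that the connecting morphism $\mathbf{\Psi}(\mathbf{\Psi^*}(\OO_{C_i}))[-1]\to\ph(\phx(\OO_{C_i}))$ is rigidified enough to force an isomorphism of the middle vertices, and this derived-categorical argument must be carried out uniformly over all Cohen--Macaulay cubics, including reducible or non-reduced ones, not merely smooth rational normal ones.
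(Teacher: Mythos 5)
Your overall strategy (reduce to $\Gamma^{(3)}\cong W^0_{1,3}$ via $\tau$, prove bijectivity, conclude by Zariski's Main Theorem) is close in spirit to the paper's, but both technical pillars of the actual proof are missing. First, your assertion that the right-hand vertex of \eqref{triangolo} is a two-term complex $\OO_X^{\oplus a}\to\UU_+^{*\oplus b}$ whose numerical type depends only on the Hilbert polynomial is not correct as stated: by \eqref{triangolo-psi} the complex a priori has three terms, $\OO_X^a\to\OO_X^{a+2}\oplus(\UU_+^*)^b\to(\UU_+^*)^{b+4}$ with $a=\hh^0(C,\OO_X(-1))$ and $b=\hh^0(C,\UU_+^*(-1))$, and only the differences $2$ and $4$ are controlled by Riemann--Roch. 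Proving $a=b=0$ is a genuine step: the paper uses the minimality of Remark \ref{evai} to see that exactness in degrees $-3$ and $-2$ forces the block $d^{-2}_2:\OO_X^a\to(\UU_+^*)^b$ to be an isomorphism, which is only possible for $a=b=0$. This is also how the paper obtains $\hh^0(\Gamma,\cL)=4$, i.e. $\cL\in W^3_{1,9}$, directly; your alternative (density of $\tilde\psi^{-1}(W^0_{1,3})$ via the Iliev--Markushevich birationality) presupposes that their birational map agrees with $\tilde\psi$ on a dense set, which you have not checked.

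Second, the injectivity step, which you yourself flag as the main obstacle, is left open, and it is precisely where the paper's key idea lives. From the resolution $0\to\OO_X^2\to(\UU_+^*)^4\xr{\zeta_C}\ph(\cL)\to\OO_C\to0$ and the identification $\Ext^2_X(\OO_C,\UU_+)^*\cong\Hom_X(\UU_+^*,\ph(\cL))\cong\HH^0(\Gamma,\cL)$, the map $\zeta_C$ becomes a nonzero $\GL(4)$-invariant element of $\Hom_X(\UU_+^*,\ph(\cL))^*\ts\Hom_X(\UU_+^*,\ph(\cL))$, hence a nonzero multiple of the identity, i.e. the canonical evaluation map $e_{\UU_+^*,\ph(\cL)}$; therefore $\OO_C\cong\cok(e_{\UU_+^*,\ph(\cL)})$ is recovered intrinsically from $\cL$. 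Your appeal to uniqueness of the counit only says that $\Hom_X(\ph(\cL),\OO_{C_1})$ is one-dimensional; it does not compare the two quotients $\OO_{C_1}$ and $\OO_{C_2}$ of the same bundle $\ph(\cL)$, so it does not force $\OO_{C_1}\cong\OO_{C_2}$. A smaller caveat: ZMT applied to a bijective morphism identifies at best the reduced structure of $\sH^0_3(X)$ with $\Gamma^{(3)}$, so by itself it does not yield smoothness of the Hilbert scheme; the paper instead produces an everywhere-defined inverse morphism $\vartheta$, whose construction for arbitrary $\cL\in W^3_{1,9}$ requires the non-vanishing of $\HHH^0(\ph(e_\cL))$, proved via Claim \ref{dupalle} and Clifford's theorem.
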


\begin{proof}
  We have seen in Lemma \ref{bau} that, for any element $C$
  of $\sH^0_3(X)$,
  the sheaf $\cL=\phx(\OO_C)$ is a line bundle of degree $9$ on
  $\Gamma$.

  Let us show that $\cL$ lies in $W^3_{1,9}$. Note first that, for any $x \in X$,
  we have $\hh^1(\Gamma,\cL\otimes \EE_x)=\hh^0(\Gamma,\cL^*\otimes
  \EE_x)$ by Serre duality.
  Hence, by stability of $\EE_x$
  and since $\cL$ has degree $9$,
  we get $\hh^1(\Gamma,\cL\otimes \EE_x)=0$, for all $x \in X$, hence
  $\HHH^{1}(\ph(\cL))=0$.
  So, by decomposition \eqref{triangolo} applied to the sheaf $\OO_C$,
  the cohomology of the complex $(\mathbf{\Psi}
  (\mathbf{\Psi^*}(\OO_C)))$ can appear only in degree $-1$ or $0$.
  By formula \eqref{triangolo-psi}, we have
  $(\mathbf{\Psi} (\mathbf{\Psi^*}(\OO_C)))^{k}=0$
  for $k\neq -1,-2,-3$ and
  \begin{align*}
    &(\mathbf{\Psi} (\mathbf{\Psi^*}(\OO_C)))^{-3}=\OO_X^a, \\
    &(\mathbf{\Psi} (\mathbf{\Psi^*}(\OO_C)))^{-2}=\OO_X^{a+2}\oplus{\UU^*_+}^{b},\\
    &(\mathbf{\Psi} (\mathbf{\Psi^*}(\OO_C)))^{-1}={\UU^*_+}^{b+4},
  \end{align*}
  where $a=\hh^0(C,\OO_X(-1))$ and $b=\hh^0(C,\UU_+^*(-1))$, and clearly,
  by Riemann-Roch formula,
  $a+2=\hh^1(C,\OO_X(-1))$ and $b+4=\hh^1(C,\UU_+^*(-1))$.
  It follows that the cohomology of the complex $(\mathbf{\Psi}(\mathbf{\Psi^*}(\OO_C)))$ is
  concentrated in degree $-1$.
  Moreover since $\HHH^k (\mathbf{\Psi} (\mathbf{\Psi^*}(\OO_C)))=0$ for $k=-3$ and $k=-2$, then
  we have that the differential
  $d^{-2}: \OO_X^a \to \OO_X^{a+2}\oplus{\UU^*_+}^{b}$
  is injective and that $\ker(d^{-1})=\im(d^{-2})$.

  Let us show that $a=b=0$.
  By the minimality of the complex (see Remark \ref{evai}), the
  differentials in the complex $(\mathbf{\Psi} (\mathbf{\Psi^*}(\OO_C)))$
  take the form:
  \[
  d^{-2} =
  \begin{pmatrix}
    0 \\
    d^{-2}_2
  \end{pmatrix},
  \qquad
  d^{-1} =
  \begin{pmatrix}
    d^{-1}_1 & 0
  \end{pmatrix}.
  \]
  Thus $\ker(d^{-1}) = \ker(d^{-1}_1)\oplus{\UU^*_+}^{b}$, and
  $\im(d^{-2}) = \im(d^{-2}_2) \subset {\UU^*_+}^{b}$.
  Since $\ker(d^{-1}) = \im(d^{-2})$, we have $\ker(d^{-1}_1)=0$ and
  $\im(d^{-2}_2) = {\UU^*_+}^{b}$. Then $d^{-2}_2$ is an isomorphism
  of $\OO_X^a$ to ${\UU^*_+}^{b}$, which implies $a=b=0$.

  We have thus:
  \begin{align}
    &\label{pippo2} \hh^1(C,\OO_X(-1))=2, & \HH^0(C,\OO_X(-1))=0.\\
    &\label{pippo} \hh^1(C,\UU_+^*(-1))=4, &
    \HH^0(C,\UU_+^*(-1))=0,
  \end{align}
  and we get the following exact sequence:
  \begin{equation} \label{res-cL}
    0 \to  \OO_X^{2} \xr{d^{-1}} (\UU_+^*)^{4} \xr{\zeta_C} \ph(\cL)
    \to \OO_C \to 0.
  \end{equation}
  Then we have, by \eqref{fistar}:
  \begin{align*}
    \hh^0(\Gamma,\cL) & = \hom_\Gamma(\OO_\Gamma,\cL) = \hom_\Gamma(\phs(\UU_+^*),\cL) = \hom_X(\UU^*_+,\ph(\cL)) = 4,
  \end{align*}
  where the last equality is obtained
  applying the functor
  $\Hom_X(\UU^*_+,-)$ to the exact sequence \eqref{res-cL}
  and
  using the fact that the group $\HH^0(C,\UU_+)$ vanishes
  by \eqref{gatto}.
  We have thus proved that $\cL$ is an element of $W^3_{1,9}$.
  This defines a morphism $\varphi : \sH^0_3(X) \to
  W^3_{1,9}$ as $\varphi(\OO_C)=\phx(\OO_C)$.

  Observe that the correspondence $\tau: \cL \mapsto \cL^*\ts
  \omega_\Gamma$
  provides an isomorphism between $W^3_{1,9}$ and
  $W^0_{1,3}$.
  On the other hand note that $W^0_{1,3}\cong
  \Gamma^{(3)}$, indeed the curve $\Gamma$
  is not trigonal (see \cite[Table 1]{mukai:curves-symmetric-I}).
  We have thus defined a morphism $\psi: \OO_C \mapsto
  (\phx(\OO_C))^*\ts \omega_\Gamma$
  from $\sH^0_3(X)$ to $\Gamma^{(3)}$ as $\psi=\tau\circ\varphi$.

  \vspace{0.2cm}

  Now, we would like to construct an inverse map $\vartheta : W^3_{1,9} \to
  \sH^0_3(X)$ of $\varphi$. We consider a line bundle $\cL$ in
  $W^3_{1,9}$, and the natural
  evaluation map of sections of $\cL$:
  \[
  e_\cL := e_{\OO_\Gamma,\cL}: \HH^0(\Gamma,\cL) \ts \OO_\Gamma \to \cL.
  \]
  Applying the functor $\ph$ to this map and taking cohomology, we
  get a map:
  \[
  \HHH^0(\ph(e_\cL)) : \HH^0(\Gamma,\cL) \ts \UU_+^* \to \ph(\cL).
  \]

  We would like to show that the cokernel of this map is the structure
  sheaf of a rational curve of degree $3$ on $X$. This will define an
  inverse map $\vartheta$ to $\varphi$.
  So let $Z=\{y_1,y_2,y_3\}$ be three, non necessarily distinct
  points of $\Gamma$. We have an exact sequence:
  \[
  0 \to \OO_\Gamma \to \OO_\Gamma(Z) \to \OO_Z \to 0.
  \]

  We set $\FF_Z = \HHH^1(\ph(\OO_\Gamma(Z)))$.
  Applying $\ph$ to the sequence above, by \eqref{phO} and \eqref{phOy},  we get:
  \[
  0 \to \UU_+^* \to \HHH^0(\ph(\OO_\Gamma(Z))) \to \bigoplus_{i=1,2,3}
  \EE_{y_i} \xr{\xi} \UU_+(1) \to \FF_Z \to 0.
  \]

  Reasoning  as in the proof of Proposition \ref{OL}, one can prove that
  $\im(\xi)$ has rank $5$ and first Chern class $3$,
  i.e. $\mu(\im(\xi))=3/5$ (this time we use Mercat's theorem
  \cite[Theorem A-1]{mercat:petite} rather than Clifford's theorem to
  show $\xi \neq 0$).


  So $\xi$ is generically surjective, and $\ker(\xi)$ is reflexive
  (by \cite[Proposition 1.1]{hartshorne:stable-reflexive})
  with rank $1$ and $c_1(\ker(\xi))=0$, i.e. $\ker(\xi) \cong \OO_X$.
  We have thus:
  \[
  0 \to \OO_X \to \bigoplus_{i=1,2,3}
  \EE_{y_i} \xr{\xi} \UU_+(1) \to \FF_Z \to 0.
  \]
  It follows that $\FF_Z$ is a sheaf of rank $0$
  with Chern classes $(0,-3,1)$. Now, twisting by $\OO_X(-1)$ the sequence above and
  taking cohomology, we get $\HH^0(X,\FF_Z(-1))=0$. It follows that $\FF_Z$ is torsion-free over a
  Cohen-Macaulay curve $C_Z$ of arithmetic genus $0$ and degree $3$ in $X$.

  Dualizing the sequence above we have:
  \[
  0 \to \UU_+^*(-1) \to \bigoplus_{i=1,2,3}
  \EE_{y_i}^*  \to \OO_X  \to \EExt^2_X(\FF_Z,\OO_X) \to 0.
  \]
  So we can define our map $\vartheta$ as
  $\vartheta(Z)=\RR\HHom_X(\FF_Z,\OO_X[-2])$.
  Applying Lemma \ref{grottendic}, we see that $\vartheta$ and
   $\varphi$ are inverse to each other.
\end{proof}

\section{Vector bundles on Fano threefolds of
  genus $7$}
\label{sec:moduli-bn} In this section, we assume that $X$ is a
smooth prime Fano threefold of genus $7$, and we let $\Gamma$ be
its homologically projectively dual curve. We set up a birational
correspondence between the component $\Mo(d)$ of Definition
\ref{dfn:Md} and a component of the Brill-Noether variety
$W^{2\,d-11}_{d-5,5\,d-24}$. This correspondence will turn out to
be an isomorphism for $d=6$.
In this section, we will need to
assume that $X$ is ordinary for $d\ge7$, in order to ensure the existence of the
well-behaved component $\Mo(d)$, constructed in Theorem \ref{add-line}.

\subsection{Vanishing results}

In order to setup the correspondence mentioned above,
we will have to prove that various cohomology groups are zero.
This is the purpose of the next series of lemmas.

\begin{lem} \label{primo}
Let $d\geq 6$ and let $\f$ be a sheaf in $\Mo_X(2,1,d)$ such that:
\begin{equation}\label{ipotesona}
\HH^1(X,F(-1))=0,
\end{equation}
then we have, for all $y\in \Gamma$:
\begin{align}
\label{gigetto} &\Ext^{k}_X(\EE_y,\f)=0, && \mbox{for all $k\neq 1$,} \\
\label{gigino} &\Ext^{j}_X(\f,\EE_y)=0, && \mbox{for all $j\geq 2$.}
\end{align}
Moreover, if $\f$ is locally free, then \eqref{gigino} holds for $j=0$
as well.
\end{lem}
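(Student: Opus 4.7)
Under the hypothesis $\HH^{1}(X,F(-1))=0$, Proposition~\ref{commento} gives $\HH^{k}(X,F(-1))=0$ for every $k$, and Serre duality then forces $\HH^{2}(X,F)=\HH^{3}(X,F)=0$. These will be the main inputs in all of what follows.

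For \eqref{gigetto}, the key identification is the adjunction
\[
\Ext^{k}_{X}(\EE_{y},F)\cong \Ext^{k}_{\Gamma}(\OO_{y},\phx(F))
\]
coming from $\EE_{y}=\ph(\OO_{y})$ (Lemma~\ref{quandofiniremo}) and the full faithfulness of $\ph$. Via base change along the flat projection $q\colon X\times\Gamma\to\Gamma$, what one must show is that $\HH^{k}(X,F\otimes\EE_{y}^{*})=0$ for every $y\in\Gamma$ and every $k\neq 1$. I would derive this from the two short exact sequences on $X$ obtained by restricting \eqref{kuz-exact} to $X\times\{y\}$,
\[
0\to \EE_{y}^{*}\to \OO_{X}^{\oplus 5}\to \GG_{y}\to 0, \qquad 0\to \GG_{y}\to \UU_{+}^{*}\to \EE_{y}\to 0,
\]
applying $\Hom_{X}(-,F)$ to each and chasing the resulting long exact sequences. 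This expresses $\Ext^{k}_{X}(\EE_{y},F)$ in terms of $\HH^{k}(X,F)$ (known from the setup paragraph to vanish in degree $\geq 2$), $\HH^{k}(X,F\otimes \UU_{+})=\Ext^{k}_{X}(\UU_{+}^{*},F)$ (to be controlled using the ACM property of $\UU_{+}$ together with the minimality of the canonical complex $\mathbf{\Psi}(\mathbf{\Psi^{*}}(F))$ of Remark~\ref{evai}, whose $\OO_{X}$-summands all vanish under our hypothesis when $F$ is locally free so that only shifts of $\UU_{+}^{*}$ remain), and $\HH^{k}(X,F\otimes\EE_{y})$ (handled by the stability of $\EE_{y}$ and $\GG_{y}$ from Lemma~\ref{GG-stability}).

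The claim \eqref{gigino} is then obtained through Serre duality on $X$:
\[
\Ext^{j}_{X}(F,\EE_{y})\cong \Ext^{3-j}_{X}(\EE_{y},F(-1))^{*}.
\]
For $j=3$, $\Hom_{X}(\EE_{y},F(-1))$ vanishes by stability, since $\mu(\EE_{y})=1/2$ and $\mu(F(-1))=-1/2$ are strictly incompatible with a non-zero morphism. For $j=2$, tensoring $0\to \EE_{y}^{*}\to \OO_{X}^{\oplus 5}\to \GG_{y}\to 0$ by $F(-1)$ and invoking $\HH^{*}(X,F(-1))=0$ identifies $\Ext^{1}_{X}(\EE_{y},F(-1))$ with $\Hom_{X}(\GG_{y}^{*},F(-1))$, which vanishes for the same slope reason ($\mu(\GG_{y}^{*})=1/3>-1/2$). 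Finally, when $F$ is locally free one has $F^{*}\cong F(-1)$ and hence $F\otimes\EE_{y}^{*}\cong F^{*}\otimes\EE_{y}$, so $\Hom_{X}(F,\EE_{y})\cong \Hom_{X}(\EE_{y},F)$ and the supplementary statement $\Ext^{0}_{X}(F,\EE_{y})=0$ reduces to the $k=0$ case of \eqref{gigetto}.

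The main obstacle I foresee is controlling $\HH^{*}(X,F\otimes\UU_{+})=\Ext^{*}_{X}(\UU_{+}^{*},F)$: unlike the other inputs, this is not directly forced by the hypothesis $\HH^{1}(X,F(-1))=0$, and making the step rigorous will rely on the exceptionality of $\UU_{+}^{*}$ (verifiable from Bott's theorem on $\Sigma^{+}$ in the spirit of Lemma~\ref{GG-stability}) combined with the identification $\phs(\UU_{+}^{*})\cong\OO_{\Gamma}$ of Lemma~\ref{quandofiniremo} and the analysis of the complex $\mathbf{\Psi}(\mathbf{\Psi^{*}}(F))$ sketched above.
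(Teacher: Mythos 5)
Your proposal has two genuine gaps, and the overall strategy cannot be completed as described. First, the case $k=0$ of \eqref{gigetto} is \emph{not} ``handled by stability'': $\EE_{y}$ and $F$ are both stable of slope $1/2$, so stability alone does not exclude a non-zero map $\EE_{y}\to F$. The paper's argument is that such a map would have rank~$2$ image (a rank~$1$ image would destabilize one of the two sheaves), hence would be injective with torsion cokernel $T$ satisfying $c_{1}(T)=0$ and $c_{2}(T)\le 0$, forcing $d=c_{2}(F)\le c_{2}(\EE_{y})=5$, against $d\ge 6$. This Chern-class count is exactly where the hypothesis $d\ge 6$ enters, and it is absent from your sketch. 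Second, the obstacle you yourself flag --- controlling $\Ext^{*}_{X}(\UU_{+}^{*},F)$ --- is fatal to the route you propose: these groups do \emph{not} vanish (once the lemma is established one finds $\Ext^{1}_{X}(\UU_{+}^{*},F)\cong\HH^{1}(\Gamma,\phx(F))$, of dimension $3d-16$, and $\Hom_{X}(\UU_{+}^{*},F)$ vanishes only for $d\ge 7$ and only via Lemma~\ref{terzo}, whose proof \emph{uses} the present lemma). Likewise, the adjunction $\Ext^{k}_{X}(\EE_{y},F)\cong\Ext^{k}_{\Gamma}(\OO_{y},\phx(F))$ is only exploitable once one knows $\phx(F)$ is a sheaf concentrated in degree $0$, and that fact is deduced in Proposition~\ref{cosonuovo} precisely from \eqref{gigetto}; so this route is circular.

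The paper sidesteps $\UU_{+}$ entirely for \eqref{gigetto}. For $k=2$ it applies Serre duality to reduce to $\Ext^{1}_{X}(F,\EE_{y}^{*})$ and then uses the restriction of \eqref{GGdual}, $0\to\EE_{y}^{*}\to\OO_{X}^{\oplus 5}\to\GG_{y}\to 0$, whose \emph{middle term is trivial}: the group is sandwiched between $\Hom_{X}(F,\GG_{y})$ (zero by stability, $\mu(F)=1/2>1/3=\mu(\GG_{y})$) and $\Ext^{1}_{X}(F,\OO_{X})^{\oplus 5}$ (zero by the hypothesis plus Proposition~\ref{commento} and Serre duality). Your treatment of \eqref{gigino} for $j=2,3$ and of the locally free supplement is essentially sound (modulo the sign $\mu(\GG_{y}^{*})=-1/3$, not $1/3$, which is harmless), but note that reducing $j=0$ to $k=0$ of \eqref{gigetto} presupposes the missing Chern-class argument above. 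Also, your opening claim that $\HH^{2}(X,F)=0$ follows ``by Serre duality'' from $\HH^{*}(X,F(-1))=0$ is not immediate --- Serre duality pairs $\HH^{2}(X,F)$ with $\Ext^{1}_{X}(F,\OO_{X}(-1))$, not with a cohomology group of $F(-1)$ --- though this group is not actually needed for the paper's proof.
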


\begin{proof}
Let us prove the first vanishing.
Notice that for $k<0$ and for $k>3$ the claim is obvious since $X$ has dimension $3$.
For $k=3$, the claim amounts to $\Hom_X(\f,\EE_y(-1))=\Hom_X(\f,\EE_y^*)=0$,
which follows from stability of $\f$ and $\EE_y$.

For $k=0$, we have to show that $\Hom_X(\EE_y,\f)=0$.
Assume the contrary, and let $f$ be a non-trivial map
$f:\EE_y \to \f$.
Note that the image $I$ of $f$ must have rank $2$, for if it had rank $1$
it would destabilize either $F$ (if $c_1(I) \geq 1$), or $\EE_y$
(if $c_1(I) \leq 0$).
So $f$ is injective, and its cokernel is a torsion sheaf $T$ with
$c_1(T)=0$, hence $c_2(T) \leq 0$. This gives the inequality $5=c_2(\EE_y) \geq
c_2(\f)$, which is in fact an equality by the minimality of
$c_2(\EE_y)$. But we have $c_2(\f)=d\geq 6$, a
contradiction.

For $k=2$, let us show that $\Ext^{1}_X(\f,\EE_y^*)=0$.
Applying the functor $\Hom_X(\f,-)$ to the restriction of
\eqref{GGdual} to $X \times \{y\}$,
we get:
\begin{equation} \label{Ext1F}
\Hom_X(\f,\GG_y) \to \Ext^1_X(\f,\EE^*_y) \to
\Ext^1_X(\f,\OO_X) \ts (\UU_{-})_{y}.
\end{equation}
It is easy to see that the term on the left hand side
vanishes by virtue of stability of $\GG_y$ and $\f$
(see Lemma \ref{GG-stability}).
On the other hand, the rightmost term vanishes by the assumption $\HH^1(X,\f(-1))=0$
and Proposition \ref{commento}.
We have thus proved \eqref{gigetto}.

Let us now turn to \eqref{gigino}.
For $j=3$, one easily sees that this group vanishes by stability of $\f$ and
$\EE_y$.
In order to prove \eqref{gigino} for $j=2$, we apply the
functor $\Hom_X(\f,-)$ to \eqref{GG}, and by stability of
$\GG_y$, we
are reduced to show that $\Ext^2_X(\f,\UU_+^*)=0$. But this
follows
easily applying $\Hom_X(\f,-)$ to \eqref{eq:universalU+}.

Finally, if $F$ is locally free, then
a nonzero map $f\in\Hom_X(F,\EE_y)$ of stable locally free sheaves
$f: F \to \EE_y$ would have to be an isomorphism, for these two bundles have the
same slope. But their second Chern classes are not equal, hence there
cannot be such $f$.
\end{proof}

\begin{lem} \label{secondo}
Let $d\geq 6$ and let $\f$ be a sheaf in $\Mo_X(2,1,d)$ satisfying
\eqref{ipotesona}.
Then we have:
\begin{align}\label{alila}
&\Ext^{k}_X(\f,\OO_X)=0, && \mbox{for any $k\in\Z$,}\\
\label{olala}
&\Ext^{k}_X(\f,\UU_+)=0, && \mbox{for any $k\neq 2$,}\\
\label{elele} &
\ext_X^2(\f,\UU_+)=2d-10.&&
\end{align}
\end{lem}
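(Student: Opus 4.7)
The proof splits naturally into \eqref{alila}, from which \eqref{olala} and \eqref{elele} will follow. For \eqref{alila}, Serre duality on the Fano threefold $X$ (where $\omega_X \cong \OO_X(-1)$) gives
\[
\Ext^k_X(F,\OO_X) \cong \HH^{3-k}(X,F(-1))^*,
\]
so the assertion is equivalent to $\HH^j(X,F(-1))=0$ for every $j$. This is precisely \eqref{comm-uno} of Proposition \ref{commento}, applied under our hypothesis $\HH^1(X,F(-1))=0$.

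To treat \eqref{olala} and \eqref{elele}, I apply $\Hom_X(F,-)$ to the universal short exact sequence \eqref{eq:universalU+}. By \eqref{alila}, the terms $\Ext^k_X(F,V\otimes \OO_X) = V\otimes \Ext^k_X(F,\OO_X)$ vanish for every $k$, so the associated long exact sequence yields both $\Hom_X(F,\UU_+)=0$ and natural isomorphisms
\[
\Ext^{k-1}_X(F,\UU_+^*) \cong \Ext^k_X(F,\UU_+), \qquad k\geq 1.
\]
Therefore \eqref{olala} for $k=0$ is automatic; for $k=1$ it reduces to $\Hom_X(F,\UU_+^*)=0$; and for $k=3$, via Serre duality $\Ext^3_X(F,\UU_+)\cong \Hom_X(\UU_+,F(-1))^*$, it reduces to $\Hom_X(\UU_+,F(-1))=0$.

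Both remaining $\Hom$-vanishings will be established by direct slope arguments, using stability of $F$ and of $\UU_+$ (hence of $\UU_+^*$) provided by Lemma \ref{GG-stability}. Since the ratios $c_1/\rk$ for $F$, $\UU_+^*$, $\UU_+$ and $F(-1)$ are $1/2$, $2/5$, $-2/5$ and $-1/2$ respectively, the inequalities $\mu(F)>\mu(\UU_+^*)$ and $\mu(\UU_+)>\mu(F(-1))$ hold. In each case a hypothetical nonzero morphism has image of rank $1$ or $2$, and both possibilities lead to contradictions: a rank-$1$ image yields a rank-$1$ subsheaf of the source whose slope exceeds the source's slope, contradicting stability of the source; a rank-$2$ image produces (by torsion-freeness of $F$) either an inclusion of a stable rank-$2$ sheaf into a target of strictly smaller slope, or---in the case $\Hom_X(\UU_+,F(-1))$---a rank-$2$ quotient of $\UU_+$ sitting inside $F(-1)$, which would simultaneously have slope strictly greater than $\mu(\UU_+)$ and at most $\mu(F(-1))$, contradicting $\mu(F(-1))<\mu(\UU_+)$.

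Finally, \eqref{elele} follows from the equality $\ext^2_X(F,\UU_+)=\chi(F,\UU_+)$, the other three $\Ext$'s being zero. A direct Riemann-Roch computation using \eqref{RiemRoch} for $g=7$, together with $c_1(\UU_+)=-2$, $c_2(\UU_+)=24$, $c_3(\UU_+)=-14$ and $c_1(F)=1$, $c_2(F)=d$, $c_3(F)=0$, yields $\chi(F,\UU_+)=2d-10$. The main obstacle is the slope case analysis for the two remaining $\Hom$-vanishings; the Riemann-Roch calculation at the end is mechanical, though it requires some care with the Chern class conventions in use on $X$.
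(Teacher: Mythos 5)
Your proof is correct and follows essentially the same route as the paper: \eqref{alila} via Serre duality and Proposition \ref{commento}, \eqref{olala} by applying $\Hom_X(F,-)$ to the universal sequence \eqref{eq:universalU+} and reducing to slope/stability vanishings (the paper handles $k=0,3$ directly by stability and $k=1$ exactly as you do), and \eqref{elele} by Riemann--Roch since only the $k=2$ term survives in $\chi(F,\UU_+)$. Your explicit unwinding of the slope arguments and the Chern-class computation just fills in details the paper leaves implicit.
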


\begin{proof}
By Serre duality and \eqref{ipotesona}, the vanishing \eqref{alila}
follows from Proposition \ref{commento}.

Moreover, for $k=0,3$ we have
$\Ext_X^k(\f,\UU_+)= 0$ by
stability of the sheaves $\UU_+$ and $\f$.
Since $\Ext_X^1(\f,\OO_X)=0$, by applying the functor
$\Hom_X(\f,-)$ to the sequence
\eqref{eq:universalU+} we get $\Ext_X^1(\f,\UU_+) \cong
\Hom_X(\f,\UU^*_+)$,
which vanishes by stability of $\f$ and $\UU_+$. This proves \eqref{olala}.
Finally, by the Riemann-Roch formula \eqref{RRg7} we obtain the last equality.
\end{proof}

\begin{lem} \label{terzo}
Let $d\geq 7$ and let $\f$ be a sheaf in $\Mo_X(2,1,d)$ satisfying
\eqref{ipotesona}.
Then we have:
\[\Hom_X(\UU^*_+,F)=0.\]
\end{lem}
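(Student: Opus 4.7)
The plan is to argue by contradiction. Suppose that $\phi\colon\UU^{*}_{+}\to F$ is a nonzero morphism, and let $I=\im(\phi)\subseteq F$. Since $\UU^{*}_{+}$ is $\mu$-stable by Lemma \ref{GG-stability} with slope $2/5<1/2=\mu(F)$, the image has rank $1$ or $2$. I rule out rank $1$ immediately: $\mu$-stability of $F$ gives $c_{1}(I)\le 0$, while $I$, being a proper quotient of the stable $\UU^{*}_{+}$, must satisfy $c_{1}(I)\ge 1$. Hence $\rk(I)=2$, and a similar slope comparison forces $c_{1}(I)=c_{1}(F)=1$. The torsion cokernel $T=F/I$ then has $c_{1}(T)=0$, so it is supported in codimension $\ge 2$ and $c_{2}(T)\le 0$, giving $c_{2}(I)\ge c_{2}(F)=d$.

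Next I study the kernel $K=\ker(\phi)$, a rank-$3$ subsheaf of $\UU^{*}_{+}$ with $c_{1}(K)=1$. Whitney's formula, using $c_{1}(\UU^{*}_{+})=2$ and $c_{2}(\UU^{*}_{+})=24$, yields $c_{2}(K)+c_{2}(I)=12$, hence $c_{2}(K)\le 12-d$. Since $\UU^{*}_{+}/K\cong I$ is torsion-free, $K$ is saturated in $\UU^{*}_{+}$ and therefore reflexive. A short slope argument shows that $K$ is actually $\mu$-stable on $X$: any rank-$1$ or rank-$2$ subsheaf of the stable $\UU^{*}_{+}$ has $c_{1}\le 0$, so its slope is at most $0<1/3=\mu(K)$, and integrality of $c_{1}$ in $\Pic(X)=\Z\langle H_{X}\rangle$ rules out proper subsheaves of $K$ of slope exactly $1/3$.

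With $K$ $\mu$-stable and reflexive on the Fano threefold $X$, Bogomolov's inequality \eqref{eq:bogomolov} gives $\Delta(K)\cdot H_{X}=6\,c_{2}(K)-2\,H_{X}^{3}=6\,c_{2}(K)-24\ge 0$, so $c_{2}(K)\ge 4$. Together with $c_{2}(K)\le 12-d$, this already rules out all $d\ge 9$. For the remaining cases $d\in\{7,8\}$ (so $c_{2}(K)\in\{4,5\}$), I pass to a general hyperplane section $S\in|H_{X}|$, a smooth K3 surface with $\Pic(S)=\Z\langle H_{S}\rangle$. Reflexivity of $K$ makes $K_{S}:=K|_{S}$ locally free for general $S$, with $\Delta(K_{S})=6\,c_{2}(K)-24\in\{0,6\}$, while Mukai's formula \eqref{eq:dimension} predicts moduli dimension $\Delta(K_{S})-2(r^{2}-1)=6\,c_{2}(K)-40\in\{-16,-10\}$, which is negative. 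Hence no $\mu$-stable rank-$3$ bundle on $S$ with these Chern classes exists. The integrality argument above, now applied on $\Pic(S)$, further shows that any $\mu$-semistable sheaf of rank $3$, $c_{1}=H_{S}$, on $S$ must be $\mu$-stable (since no $\mu$-stable summand of slope $1/3$ and rank $1$ or $2$ can exist). Combining these gives the required contradiction.

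The main obstacle I anticipate is to show that $K_{S}$ really is $\mu$-semistable on $S$: Maruyama's restriction theorem requires $\rk<\dim X$, which just fails here with $\rk K=3=\dim X$. To bridge this I would argue directly: any destabilizing subsheaf $K'_{S}\subseteq K_{S}$ on $S$ of rank $1$ or $2$ embeds into $\UU^{*}_{+}|_{S}$, which is ACM (Remark \ref{agostino}) and $\mu$-stable (either by Hoppe's criterion after restriction, as in Lemma \ref{GG-stability}, or by applying Mehta-Ramanathan to the $\mu$-stable $\UU^{*}_{+}$); since proper subsheaves of $\UU^{*}_{+}|_{S}$ of rank $1$ or $2$ have $c_{1}\le 0$, we would obtain $\mu(K'_{S})\le 0<1/3$, contradicting the hypothesis that $K'_{S}$ destabilizes $K_{S}$.
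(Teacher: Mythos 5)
Your strategy is genuinely different from the paper's. The paper composes with the surjection $\UU_+^*\to\EE_y$ from \eqref{GG}, uses $\Hom_X(\EE_y,F)=0$ (Lemma \ref{primo}) to reduce to a nonzero map $\GG_y\to F$, and then derives a purely numerical contradiction from $c_2(\GG_y)=7$ together with $c_2(T)\le 0$ and $c_3(T)\ge 0$ for the torsion quotient $T$ — no restriction to a surface is needed. Your analysis of $I=\im(\phi)$ and $K=\ker(\phi)$ is correct (the cross term in Whitney's formula, the reflexivity and $\mu$-stability of $K$, and the Bogomolov bound $c_2(K)\ge 4$ all check out), and it does dispose of every $d\ge 9$.

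The gap is in the residual cases $d=7,8$, exactly at the point you flag, and your proposed bridge does not close it. Mehta--Ramanathan gives (semi)stability of the restriction only to a general member of $|kH_X|$ for $k\gg 0$, not to a hyperplane section, so it cannot be invoked here; and Maruyama fails since $\rk(K)=3=\dim X$ (and $\rk(\UU_+^*)=5$). The remaining route, ``Hoppe after restriction,'' needs $\HH^0(S,\UU_+^*|_S(-1))=0$ and $\HH^0(S,\wedge^2\UU_+^*|_S(-1))=0$. The first follows from \eqref{formuletta} and the ACM property of $\UU_+$, but the second requires $\HH^1(X,\wedge^2\UU_+^*(-2))=\HH^1(X,\wedge^3\UU_+)=0$, a group the paper never computes: Lemma \ref{GG-stability} only records $\HH^\bullet(X,\wedge^2\UU_+)$ and the two $\HH^0$-vanishings of \eqref{formuletta}, which by Serre duality control $\wedge^3\UU_+(1)$ rather than $\wedge^3\UU_+$. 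This vanishing is plausibly true and should follow from the same Koszul-plus-Bott computation on $\Sigma^+$, but until it is carried out the $\mu$-semistability of $K_S$ is unproved, and the negative-dimension argument on the K3 surface never gets off the ground. So the proof as written is incomplete for $d=7,8$; either supply that Bott computation, or handle a putative destabilizing subsheaf of $K$ (equivalently of $\UU_+^*$) directly on $X$ as the paper does via $\GG_y$.
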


\begin{proof}
Fix a point $y$ in $\Gamma$.
Applying the functor $\Hom_X(-,F)$ to the exact sequence
\eqref{GG}
we obtain an exact sequence:
\[
0 \to \Hom_X(\EE_y,F) \to \Hom_X(\UU^*_+,F) \to
\Hom_X(\GG_y,F).
\]
By Lemma \ref{primo} we know
that the leftmost term vanishes.
Assume that the rightmost does not, and consider a non-zero
map $f :
\GG_y \to F$. Set $F' = \im(f)$ and note that, by stability
of the
sheaves $F$ and $\GG_y$, we must have $\rk(F')=2$ and
$c_1(F')=1$.
We have thus an exact sequence:
\begin{equation}
  \label{eq:Fprime}
  0 \to F' \to F \to T \to 0,
\end{equation}
where $T$ is a torsion sheaf, with $\dim(\supp(T))\leq 1$.
Note that $F'$ is stable, since any destabilizing subsheaf would destabilize also $F$.
By \cite[Propositions 1.1 and
1.9]{hartshorne:stable-reflexive}, the sheaf $\ker(f)$
must be a line bundle of degree zero.
This means that $\ker(f) \cong \OO_X$, and we have an exact
sequence:
\[
0 \to \OO_X \to \GG_y \to F' \to 0.
\]
So $F'$ satisfies $c_2(F')=7$, $c_3(F')=2$.
Hence by \eqref{eq:Fprime} it follows that
$d=c_2(F)=7+c_2(T)\le 7$, since $c_2(T)$ is non-positive.
Hence we have $d = 7$ and in this case
we have $c_2(T)=0$, and $c_3(T)=-2$.
But this is a contradiction since $c_3(T)$
must be non-negative.
We have thus proved our claim.
\end{proof}

\subsection{Canonical resolution of a bundle in $\Mo_X(2,1,d)$}

We will show here that a sheaf $F$
in $\Mo_X(2,1,d)$ which satisfies
$\HH^1(X,F(-1))=0$ admits a canonical resolution having two
terms, see the
formula \eqref{resolution} below.
Recall that, if the threefold $X$ is ordinary, such a sheaf exists for all $d\geq 6$ by Theorem \ref{add-line}.

\begin{prop} \label{cosonuovo}
Let $d\geq 6$ and let $\f$ be a sheaf in $\Mo_X(2,1,d)$ such that
$\HH^1(X,F(-1))=0.$
Then $\cF=\phx(\f)$ is a simple vector bundle on $\Gamma$, with:
\begin{equation} \label{alberto}
\rk(\cF)=d-5, \qquad \deg(\cF)=5\,d-24.
\end{equation}

Moreover, $\f$ admits the following canonical resolution:
\begin{equation}\label{resolution}
0 \to \Ext_X^2(\f,\UU_+)^* \ts \UU_+^* \xr{\zeta_\f}
\ph(\cF) \to \f \to 0,
\end{equation}
and $\ph(\cF)$ is a simple vector bundle. 
\end{prop}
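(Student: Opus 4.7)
The central tool is the semi-orthogonal decomposition \eqref{decomposition} together with the canonical triangle \eqref{triangolo}. By Lemma \ref{secondo}, every summand in the formula \eqref{triangolo-psi} for $\mathbf{\Psi}(\mathbf{\Psi^*}(F))$ vanishes except the one indexed by $\Ext^2_X(F,\UU_+)$, so
\[
\mathbf{\Psi}(\mathbf{\Psi^*}(F)) \cong V_0^* \otimes \UU_+^*[1], \qquad V_0 := \Ext^2_X(F,\UU_+),
\]
with $\dim V_0 = 2d-10$ by \eqref{elele}. Plugging this into \eqref{triangolo} and running the long exact sequence of sheaf cohomology for the triangle $\ph(\phx F)\to F\to V_0^*\otimes\UU_+^*[1]$ shows that $\HHH^i(\ph(\phx F))=0$ for $i\neq 0$ and produces the short exact sequence \eqref{resolution}.

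To prove that $\phx(F)$ is a vector bundle on $\Gamma$, I would use the adjunction $\phx\dashv \ph$ together with $\ph(\OO_y)\cong \EE_y$ from \eqref{phOy}: for every $y\in\Gamma$,
\[
\RR\Hom_\Gamma(\OO_y, \phx F)\cong \RR\Hom_X(\EE_y, F) \cong \Ext^1_X(\EE_y,F)[-1],
\]
the second isomorphism being Lemma \ref{primo}. Since $\Gamma$ is a smooth curve, $\Ext^p_\Gamma(\OO_y,-)$ vanishes for $p\neq 0,1$, hence the spectral sequence \eqref{ss2} degenerates at $E_2$. The concentration of its abutment in degree $1$ then forces $\HHH^q(\phx F)=0$ for $q\neq 0$ and $\HHH^0(\phx F)$ to be torsion-free at each point, so $\phx F$ is a vector bundle.

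For the invariants \eqref{alberto}, the rank is $\rk(\phx F)=\dim\Ext^1_X(\EE_y,F)=-\chi(\EE_y,F)$; a Chern-character computation using $(c_1,c_2,c_3)=(1,5,0)$ for $\EE_y$ and $(1,d,0)$ for $F$ yields $d-5$. For the degree, adjunction gives $\RR\Gamma(\Gamma,\phx F)\cong \RR\Hom_X(\ph\OO_\Gamma,F)$; breaking $\ph\OO_\Gamma$ via its cohomology sheaves \eqref{phO} produces $\chi(\phx F)=\chi(\UU_+^*,F)-\chi(\UU_+(1),F)$. The two Euler characteristics are computed by Riemann-Roch, and matching the outcome with $\chi(\phx F)=\deg(\phx F)-6(d-5)$ outputs $\deg(\phx F)=5d-24$.

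Finally, fully faithfulness of $\ph$ gives $\End_X(\ph\phx F)\cong \End_\Gamma(\phx F)$, so simplicity for one implies it for the other. When $d=6$, $\phx F$ has rank $1$ and is trivially simple. For $d\geq 7$, I would apply $\Hom_X(-,F)$ to \eqref{triangolo}: the flanking groups $\Hom_X(\UU_+^*\otimes V_0^*[1],F)$ and $\Hom_X(\UU_+^*\otimes V_0^*,F)=V_0\otimes\Hom_X(\UU_+^*,F)$ both vanish, the latter by Lemma \ref{terzo}, yielding an isomorphism $\End(\ph\phx F)\cong \End_X(F)=\C$. The hardest piece is to check that $\ph\phx F$ is actually \emph{locally free}, not merely torsion-free coherent, when $F$ is allowed to be non-locally-free; I would attack this by applying $\RR\HHom_X(-,\OO_X)$ to \eqref{resolution} and combining the vanishings $\Ext^k_X(\ph\phx F,\OO_X)=0$ coming from semiorthogonality with Lemma \ref{secondo} to deduce that $\EExt^k_X(\ph\phx F,\OO_X)=0$ for $k>0$, which on a smooth threefold implies local freeness.
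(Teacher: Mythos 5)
Your proposal follows the paper's own proof almost step for step. The resolution \eqref{resolution} is extracted from the triangle \eqref{triangolo} exactly as in the paper, using Lemma \ref{secondo} to kill all terms of \eqref{triangolo-psi} except $\Ext^2_X(F,\UU_+)^*\ts\UU_+^*$; your adjunction-plus-spectral-sequence argument for $\phx(F)$ being a sheaf is the paper's stalk computation $\HHH^k(\phx(F))_y\cong\Ext^{k+1}_X(\EE_y,F)\ts\omega_{\Gamma,y}$ in different packaging, and both rest on Lemma \ref{primo}; the rank, the degree, and the simplicity argument (Lemma \ref{terzo} for $d\geq 7$, full faithfulness of $\ph$) are identical in substance.

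The one genuine gap is in your final paragraph, on local freeness of $\ph(\phx(F))$ when $F$ itself is not locally free. The inference ``$\Ext^k_X(G,\OO_X)=0$ for all $k$ implies $\EExt^k_X(G,\OO_X)=0$ for $k>0$'' is not valid: the local-to-global spectral sequence \eqref{spectr-ext} bounds the global groups by the local ones, not conversely. Concretely, if $F$ sits in \eqref{doubledual}, then applying $\HHom_X(-,\OO_X)$ to \eqref{resolution} exhibits $\EExt^1_X(\ph(\phx(F)),\OO_X)$ as a quotient of $\EExt^1_X(F,\OO_X)\cong\EExt^2_X(\OO_L,\OO_X)\cong\bigwedge^2 N_L\neq 0$, and neither semiorthogonality nor Lemma \ref{secondo} forces that quotient to vanish. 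A correct route (the one the paper's terse ``this resolution also proves\dots'' is implicitly relying on): the resolution already shows that $\ph(\cF)$, with $\cF=\phx(F)$ locally free on $\Gamma$, is concentrated in degree $0$, hence $\HH^1(\Gamma,\cF\ts\EE_x)=0$ for every $x\in X$ by cohomology and base change; therefore $\hh^0(\Gamma,\cF\ts\EE_x)=\chi(\cF\ts\EE_x)$ is independent of $x$ and Grauert's criterion gives local freeness. Equivalently, \eqref{gro-X} identifies $\RR\HHom_X(\ph(\cF),\OO_X)$ with $\ph(\cF^*)(-1)[1]$, whose cohomology sheaves in degrees $\geq 1$ vanish automatically because $\ph$ of a sheaf on a curve lives in degrees $0$ and $1$; this yields $\EExt^k_X(\ph(\cF),\OO_X)=0$ for $k\geq 1$ by an argument of the flavor you wanted, but through Grothendieck duality rather than through the vanishing of global $\Ext$ groups.
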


\begin{proof}
Consider the stalk over a point $y \in \Gamma$ of the sheaf
$\HHH^k(\phx(\f))$.
We have:
\begin{equation}
\HHH^k(\phx(\f))_y \cong \Ext^{k+1}_X(\EE_y,\f) \ts
\omega_{\Gamma,y}.
\end{equation}

Hence by Lemma \ref{primo} it follows that this group vanishes for all
$y\in \Gamma$ and for all $k \neq 0$, so $\phx(\f)$ can be identified with a coherent
sheaf $\cF$ on $\Gamma$.
By Remark \ref{rango-grado} the sheaf $\cF$ is locally
free over $\Gamma$, with rank $d-5$, and $\deg(\cF)=5\,d-24$.

Let us exhibit the resolution \eqref{resolution}.
Note that, by formula \eqref{triangolo-psi} and Lemma \ref{secondo} we get that the complex
$(\mathbf{\Psi} (\mathbf{\Psi^*}(F)))$ is concentrated in degree $-1$ and isomorphic to
$\Ext_X^{2}(F,\UU_+)^*\ts \UU_+^*$.
Hence the exact triangle \eqref{triangolo} provides
the resolution \eqref{resolution} for $\f$.

This resolution also proves that 
$\HHH^i(\ph(\cF))=0$, for all $i\neq0$, which means (by the definition
\eqref{cocca} of $\ph$) that
$\RR^i p_*(q^*(\cF\ts \EE))=0$ for all $i\neq0$.
Then by \cite[Corollaries 7.9.9 and 7.9.10]{EGA3}, we conclude that
$\ph(\cF)=\RR^0 p_*(q^*(\cF)\ts \EE)$ is a locally free sheaf on $X$.


Let us now prove that $\cF$ is a simple bundle.
If $d=6$, then $\cF$ is a line bundle, hence it is
obviously simple.
For $d\geq7$ we want to prove that the group
$\Hom_\Gamma(\cF,\cF)\cong\Hom_X(\ph(\cF),F)$
is $1$-dimensional.
Applying the functor $\Hom_X(-,F)$ to the sequence
\eqref{resolution}
we obtain:
\[\Hom_X(\ph(\phx(F)),F)\cong \Hom_X(F,F),\]
since the term $\Hom_X(\UU^*_+,F)$ vanishes by
Lemma \ref{terzo}.
Hence $\cF=\phx(F)$ is simple, for $F$ is.
Since the functor $\ph$ is fully faithful, it follows that also the vector bundle 
$\ph(\cF)$ is simple.
\end{proof}

\begin{lem} \label{sette}
Let $d\geq 7$ and let $F$ be as in the previous proposition,
and set $\cF=\phx(F)$, $A_F = \Ext^2_X(F,\UU_+)^*$.
Then we have the natural isomorphism:
\begin{equation}
  \label{eq:H2H0}
  A_F \cong \HH^0(\Gamma,\cF) \cong
\Hom_X(\UU^*_+,\ph(\cF)).
\end{equation}

In particular $\hh^0(\Gamma,\cF)=2d-10$.
\end{lem}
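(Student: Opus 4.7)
The plan is to deduce both isomorphisms from the canonical resolution \eqref{resolution} together with the adjunction between $\phs$ and $\ph$, and to compute the dimension from \eqref{elele}.

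For the second isomorphism, the strategy is to use the fact that $\phs$ is left adjoint to $\ph$ together with the isomorphism $\phs(\UU_+^*) \cong \OO_\Gamma$ from Lemma \ref{quandofiniremo}. This gives immediately
\[
\Hom_X(\UU_+^*, \ph(\phx(F))) \cong \Hom_\Gamma(\phs(\UU_+^*), \phx(F)) \cong \Hom_\Gamma(\OO_\Gamma, \phx(F)) \cong \HH^0(\Gamma,\phx(F)),
\]
and these identifications are natural.

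For the first isomorphism, I would apply the functor $\Hom_X(\UU_+^*, -)$ to the canonical resolution
\[
0 \to A_F \ts \UU_+^* \xr{\zeta_F} \ph(\phx(F)) \to F \to 0
\]
of Proposition \ref{cosonuovo}. The resulting long exact sequence reads
\[
0 \to \Hom_X(\UU_+^*, A_F \ts \UU_+^*) \to \Hom_X(\UU_+^*, \ph(\phx(F))) \to \Hom_X(\UU_+^*, F) \to \Ext^1_X(\UU_+^*, A_F \ts \UU_+^*).
\]
The key input is that $\UU_+^*$ is an exceptional object in $\D(X)$ (it appears in Kuznetsov's semiorthogonal decomposition \eqref{decomposition}), so $\Ext_X^k(\UU_+^*, \UU_+^*) = 0$ for $k \geq 1$ and $\Hom_X(\UU_+^*, \UU_+^*) = \C$; this yields $\Hom_X(\UU_+^*, A_F \ts \UU_+^*) \cong A_F$ and kills the $\Ext^1$ term. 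Combined with Lemma \ref{terzo}, which provides $\Hom_X(\UU_+^*, F) = 0$ (here is where the hypothesis $d \geq 7$ enters), the sequence collapses to the required isomorphism $A_F \cong \Hom_X(\UU_+^*, \ph(\phx(F)))$.

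For the last assertion, the dimension $\hh^0(\Gamma, \phx(F)) = 2d - 10$ follows from $\dim A_F = \ext_X^2(F, \UU_+) = 2d - 10$, which is exactly formula \eqref{elele}. I do not anticipate a serious obstacle: the only subtle point is confirming exceptionality of $\UU_+^*$, which however is standard and already implicit in the semiorthogonal decomposition \eqref{decomposition} (and can in any case be verified directly by the Bott-theoretic computation carried out in Lemma \ref{GG-stability}).
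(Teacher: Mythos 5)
Your proof is correct and follows essentially the same route as the paper: apply $\Hom_X(\UU_+^*,-)$ to the canonical resolution \eqref{resolution}, use Lemma \ref{terzo} to kill $\Hom_X(\UU_+^*,F)$, and identify $\Hom_X(\UU_+^*,\ph(\phx(F)))$ with $\HH^0(\Gamma,\phx(F))$ via the adjunction $\phs\dashv\ph$ and $\phs(\UU_+^*)\cong\OO_\Gamma$ from \eqref{fistar}. The only difference is that you make explicit the exceptionality of $\UU_+^*$ (which the paper leaves implicit in the semiorthogonal decomposition), and you note, correctly, that once $\Hom_X(\UU_+^*,F)=0$ the $\Ext^1$ term is not even needed.
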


\begin{proof}
By Lemma \ref{terzo}
we know that $\Hom_X(\UU^*_+,F)=0$.
Therefore, applying the functor $\Hom_X(\UU^*_+,-)$ to the
resolution \eqref{resolution} we
obtain:
\[
A_F =\Ext^2_X(F,\UU_+)^* \cong \Hom_X(\UU_+^*,\ph(\cF)).
\]
By adjunction and \eqref{fistar} we have the isomorphisms:
\[
\Hom_X(\UU_+^*,\ph(\cF))\cong
\Hom_\Gamma(\phs(\UU_+^*),\cF)
\cong
\HH^0(\Gamma,\cF).
\]
The last statement follows by \eqref{elele}.
\end{proof}

\begin{lem} \label{lem-gg}
Let $d\geq 6$ and let $\f$ be a locally free sheaf in
$\Mo_X(2,1,d)$ with $\HH^1(X,F(-1))=0$.
Set $\cF = \phx(F)$. Then $\cF$ is globally generated and we have the exact sequence
\begin{equation}\label{ferra}
0 \to \cF^* \to \Ext^2_X(\f,\UU_+)^* \ts \OO_{\Gamma}
\to \cF \to  0.
\end{equation}
\end{lem}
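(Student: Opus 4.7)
The approach is to apply the functor $\phx$ to a dualized and twisted version of the canonical resolution of $F$ from Proposition \ref{cosonuovo}, and then extract the short exact sequence \eqref{ferra} from the resulting long exact sequence of cohomology sheaves on $\Gamma$.

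Set $G = \ph(\phx(F))$, which is a vector bundle on $X$ by Proposition \ref{cosonuovo}. Since $F$, $\UU_+^*$, and $G$ are all locally free, I would dualize the canonical resolution $0 \to A_F^* \otimes \UU_+^* \to G \to F \to 0$ and twist by $\OO_X(1)$, using $F^*(1) \cong F$ (as $F$ has rank $2$ and $c_1=1$), to obtain
\[
0 \to F \to G^*(1) \to A_F \otimes \UU_+(1) \to 0.
\]

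The crucial step is identifying $\phx$ of the middle and right-hand terms. For the middle term, Grothendieck duality (Lemma \ref{grottendic}) applied to $\phx(F)$ gives $G^* \cong \ph(\phx(F)^*)(-1)[1]$, so $G^*(1) \cong \ph(\phx(F)^*)[1]$, whence $\phx(G^*(1)) \cong \phx(F)^*[1]$ (since $\phx \circ \ph = \mathrm{id}$, as $\ph$ is fully faithful). For the right-hand term, I would use the truncation triangle $\UU_+^* \to \ph(\OO_\Gamma) \to \UU_+(1)[-1] \to \UU_+^*[1]$, obtained from the cohomology computation \eqref{phO}; applying $\phx$ together with the vanishings $\phx(\UU_+^*) = 0$ and $\phx(\ph(\OO_\Gamma)) = \OO_\Gamma$ from Lemma \ref{quandofiniremo}, this yields $\phx(\UU_+(1)) \cong \OO_\Gamma[1]$.

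Applying $\phx$ to the twisted sequence then gives a distinguished triangle
\[
\phx(F) \to \phx(F)^*[1] \to A_F \otimes \OO_\Gamma[1] \to \phx(F)[1]
\]
in $\D(\Gamma)$, whose cohomology sheaves are concentrated in degree $0$ (first term) and degree $-1$ (second and third terms). The associated long exact sequence collapses to
\[
0 \to \phx(F)^* \to A_F \otimes \OO_\Gamma \to \phx(F) \to 0,
\]
with the surjectivity of the right-hand map proving that $\phx(F)$ is globally generated, and the identification $A_F \cong \HH^0(\Gamma,\phx(F))$ following from Lemma \ref{sette}. The ranks ($d-5$, $2d-10$, $d-5$) and degrees ($24-5d$, $0$, $5d-24$) are consistent.

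The main obstacle is the precise bookkeeping in step (a): correctly applying Grothendieck duality to compute $\phx(G^*(1))$ as $\phx(F)^*[1]$, in particular recognizing that the shift arising from Lemma \ref{grottendic} combines with the twist by $\OO_X(1)$ to cancel into a single degree shift, so that the long exact sequence collapses cleanly to a single short exact sequence rather than leaving nontrivial connecting maps.
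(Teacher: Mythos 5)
Your proof is correct, and it is in essence the dual formulation of the paper's argument. The paper applies the left adjoint $\phs$ directly to the resolution \eqref{resolution}, using $\phs(\UU_+^*)\cong\OO_\Gamma$ from Lemma \ref{quandofiniremo}, $\phs(\ph(\phx(F)))\cong\phx(F)$ from full faithfulness, and the identification $\phs(F)[-1]\cong(\phx(F))^*$ — which is where Grothendieck duality \eqref{gro-gamma} and the vanishings $\Ext^k_X(F,\EE_y)=0$ for $k\neq 1$ (Lemma \ref{primo}, using local freeness of $F$) enter. You instead dualize and twist the resolution and apply $\phx$: your Grothendieck duality step lands on $\ph(\phx(F))$ via \eqref{gro-X} rather than on $F$, and the role of $\phs(\UU_+^*)\cong\OO_\Gamma$ is played by your computation $\phx(\UU_+(1))\cong\OO_\Gamma[1]$ from the truncation triangle of $\ph(\OO_\Gamma)$, which is a nice self-contained derivation. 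Both routes use local freeness of $F$ in an essential way (you to dualize the exact sequence, the paper to make $\phs(F)$ a shifted sheaf), and both triangles collapse to the same short exact sequence. One small caution: Lemma \ref{sette} is stated only for $d\geq 7$, so for $d=6$ you should deduce global generation directly from the surjection $\Ext^2_X(F,\UU_+)^*\ts\OO_\Gamma \epi \phx(F)$ (any surjection from a trivial bundle suffices), rather than via the identification of that vector space with $\HH^0(\Gamma,\phx(F))$; this is also all the paper uses.
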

\begin{proof}
Consider the complex $\phs(\f)$.
Let us compute
the stalk over the point $y \in \Gamma$ of
the sheaf $\cH^k(\phs(\f))$.
We obtain:
\[
\cH^{-k}(\phs(\f))_y \cong \Ext^{k}_X(F,\EE_y)^*.
\]

But the above vector space vanishes for any $k\neq 1$ in view of Lemma \ref{primo}, for $F$ is locally free.
So $\phs(\f)[-1]$ is a locally free sheaf.
Moreover,
applying \eqref{gro-gamma} we have
$(\phx(\f))^*\cong \RR \HHom_\Gamma(\cF,\OO_\Gamma) \cong
\phx(F^*) \otimes \omega_\Gamma^* [1]$.
Using the definition of $\phx$ and the isomorphism $F^*\cong F(-H_X)$, we have
$\phx(F^*) \otimes \omega_\Gamma^* [1]\cong
\RR q_*(p^*(F)\ts \EE^* (H_{\Gamma}-H_X))[1]\otimes \OO_\Gamma(-H_\Gamma) [1]$.
By definition of $\phs$ we have
$\phs(F)[-1] = \RR q_*(p^*(F)\ts \EE^* (-H_X))[2]$,
and so we get:
\[
\phs(\f)[-1]\cong (\phx(\f))^*.
\]

Remark that, for any sheaf $\cP$ on the curve $\Gamma$,
since the functor $\ph$ is fully faithful, we have:
$$\phs(\ph(\cP)) \cong \cP.$$

Thus, applying the functor $\phs$ to \eqref{resolution}, we
obtain, in view of \eqref{fistar},
the exact sequence \eqref{ferra}.
Hence,  the sheaf $\cF$ is globally generated.
\end{proof}

In order to set up our correspondence between $\Mo(d)$ and
$W^{2\,d-11}_{d-5,5\,d-24}$, we have to prove that, for a general $F$
in $\Mo(d)$, the vector bundle $\phx(F)$ over $\Gamma$ is stable. This
is done in the next lemma.

\begin{lem}\label{stabile}
For each integer $d\geq 6$, there exists a Zariski dense
open subset
$\Omega(d) \subset \Mo(d)$, such that each point $F_d$ of
$\Omega(d)$ satisfies $\HH^1(X,F_d(-1))=0$, and
 $\cF_d=\phx(\f_d)$ is a stable sheaf.
\end{lem}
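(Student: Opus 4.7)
The plan is to proceed by induction on $d \ge 6$, combining Proposition \ref{cosonuovo} with the openness of stability. Let $U_d \subset \Mo(d)$ be the open locus cut out by $\HH^1(X,F(-1))=0$: this is non-empty by Theorem \ref{add-line}, hence dense in the irreducible component $\Mo(d)$. Over $U_d$, Proposition \ref{cosonuovo} shows that $\phx(F)$ is a simple vector bundle on $\Gamma$ of rank $d-5$ and degree $5d-24$, varying algebraically with $F$. Since stability is a Zariski open condition in flat families of pure sheaves of fixed Hilbert polynomial, it suffices to exhibit a single $F \in U_d$ for which $\phx(F)$ is stable.

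The base case $d=6$ is immediate, as $\phx(F)$ then has rank one. For the inductive step I would pick $F_{d-1} \in \Omega(d-1)$ (so $\phx(F_{d-1})$ is stable) together with a general line $L \subset X$ with $N_L \cong \OO_L \oplus \OO_L(-1)$, and form $F_d := \ker(F_{d-1} \epi \OO_L) \in \No(d) \subset \Mo(d)$, which by Theorem \ref{add-line} also satisfies $\HH^1(X,F_d(-1))=0$. Applying $\phx$ to $0 \to F_d \to F_{d-1} \to \OO_L \to 0$ and using Lemma \ref{OL} in the form $\phx(\OO_L) \cong \cL[1]$ for some line bundle $\cL$ of degree $5$ on $\Gamma$, I obtain a short exact sequence
\[
0 \to \cL \to \phx(F_d) \to \phx(F_{d-1}) \to 0
\]
which must be non-split: otherwise $\phx(F_d) \cong \cL \oplus \phx(F_{d-1})$ would have an endomorphism algebra of dimension at least two (the summands being non-isomorphic, either by rank or by degree), contradicting the simplicity provided by Proposition \ref{cosonuovo}.

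The technical crux is to deduce stability of $\phx(F_d)$ from this non-split extension. I would compare the slopes $\mu(\cL)=5$, $\mu(\phx(F_d))=5+\tfrac{1}{d-5}$ and $\mu(\phx(F_{d-1}))=5+\tfrac{1}{d-6}$, and for a saturated proper subsheaf $M \subset \phx(F_d)$ of rank $s \le d-6$ analyze $N := M \cap \cL$ together with the image $P$ of $M$ in $\phx(F_{d-1})$. If $N=0$, then $M \cong P$ as a subsheaf of $\phx(F_{d-1})$; the case $P = \phx(F_{d-1})$ (which can only occur when $s=d-6$) is ruled out by non-splitting, and in every other case the strict bound $\mu(P) < 5+\tfrac{1}{d-6}$ coming from stability of $\phx(F_{d-1})$, together with the integrality of $\deg P$, forces $\deg P \le 5s$. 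If $N = \cL$, the same integrality argument applied to $M/\cL \subsetneq \phx(F_{d-1})$ (of rank $s-1 < d-6$) yields $\deg M \le 5s$. Either way $\mu(M) \le 5 < \mu(\phx(F_d))$, so $\phx(F_d)$ is stable and lies in $\Omega(d)$, closing the induction. The main obstacle is precisely this slope/integrality analysis; the borderline rank $s=d-6$, where stability of $\phx(F_{d-1})$ alone does not suffice and the non-splitting hypothesis is essential, is the most delicate point.
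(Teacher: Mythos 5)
Your proposal is correct and follows essentially the same route as the paper: induction on $d$ starting from the line-bundle case $d=6$, the non-split extension $0\to\cL\to\phx(F_d)\to\phx(F_{d-1})\to 0$ obtained by applying $\phx$ to \eqref{defEd} (non-splitting via simplicity from Proposition \ref{cosonuovo}), a slope-plus-integrality argument ruling out all destabilizing subsheaves except one isomorphic to $\phx(F_{d-1})$, and openness of stability to spread the conclusion over a dense open subset. Your case analysis on $M\cap\cL$ is in fact slightly more explicit than the paper's terse bound $5+\tfrac{1}{d-5}<\tfrac{c}{r}\le 5+\tfrac{1}{d-6}$, but the substance is identical.
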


\begin{proof}
Let us prove the statement by induction on $d\geq 6$.
If $d=6$, $\cF_6=\phx(\f_6)$ is stable since it is a line bundle.
Suppose now $d>6$, assume that $F_d$ and $F_{d-1}$ fit into
\eqref{defEd} for some line $L\subset X$, and that $\cF_{d-1}=\phx(\f_{d-1})$ is
a stable bundle.
Recall that $\cL = \phx(\OO_L)[-1]$ is a line bundle of
degree $5$ by Proposition \ref{OL}.
Applying the functor $\phx$ to the sequence \eqref{defEd},
we get
\begin{equation}\label{phxFd}
0\to \cL \to \cF_d \to \cF_{d-1} \to 0.
\end{equation}
Notice that the extension \eqref{phxFd} is non-trivial
because $\cF_d$ is indecomposable since it is simple (see Proposition \ref{cosonuovo}).

Since, by formulas \eqref{alberto}, we know that
$\mu(\cF_d)=\frac{5d-24}{d-5}=5+\frac{1}{d-5}$, it is
enough to
prove that $\cF_d$ is semistable.
Assume by contradiction that there exists a subsheaf $\cK$
destabilizing $\cF_d$ of rank $r<d-5$ and degree $c$.
Since $\cF_{d-1}$ is stable, we must have
\[
5+\frac{1}{d-5}< \frac{c}{r} \le 5+\frac{1}{d-6},
\]
from which we get
\[
0< \frac{c}{r}-\frac{5d-24}{d-5} \le \frac{1}{(d-5)(d-6)}.
\]
It is easy to check that the only possibility is $r=d-6$
and $c=5d-29$, and so we would have $\cK\cong
\cF_{d-1}$ and
\eqref{phxFd} would split, a contradiction.
Hence $\cF_d$ is stable. Therefore the same holds for a
general
point of $\Mo(d)$ by Maruyama's result
\cite{maruyama:openness}.
\end{proof}

\begin{rmk}
We do not know whether the bundle $\cF=\phx(F)$ is stable for {\it all} sheaves $F$ in $\Mo_X(2,1,d)$ with
$\HH^1(X,F(-1))=0$, not even
assuming that $F$ lies in the component $\Mo(d)$.
\end{rmk}

In the next section we will study the space $\Mo_X(2,1,d)$,
focusing first on the case $d \geq 7$,
where we give a birational description. The case $d=6$ will
be treated in greater detail further on.

\subsection{The moduli spaces $\Mo_X(2,1,d)$, with $d\geq7$}
Here we show that the component $\Mo(d)$ of the variety
$\Mo_X(2,1,d)$ containing
the sheaves arising from the construction of Theorem
\ref{add-line} is birational to a component
$\Wo(d)$ of $W^{2\,d-11}_{d-5,5\,d-24}$.
Recall that in Lemma \ref{stabile} we have introduced the open set
$\Omega(d)\subset\Mo(d)$. Every sheaf $F\in\Omega(d)$ satisfies the following two conditions:
\begin{enumerate}[i)]
\item \label{H1} the group $\HH^1(X,F(-1))$ vanishes,
\item \label{assume-stable} the vector bundle $\cF=\phx(\f)$
is stable.
\end{enumerate}
Then we have a morphism:
\begin{align*}
  \varphi: \Omega(d) & \to W^{2\,d-11}_{d-5,5\,d-24}, \\
  F & \mapsto \phx(\f)
\end{align*}
which is well-defined by Proposition \ref{cosonuovo} and Lemmas \ref{sette}, \ref{stabile}.

We denote by $\Wo(d)$ the irreducible component of $W^{2\,d-11}_{d-5,5\,d-24}$
containing the image of $\varphi$. We can thus state the following result.

\begin{thm} \label{thm:moduli-brill}
Let $X$ be a smooth ordinary prime Fano threefold of genus $7$,
and let $\f$ be a sheaf in $\Omega(d)$ for $d\geq7$.
Then:
\begin{enumerate}[i)]
\item \label{uno-d} the tangent space, respectively the space of obstructions, to $\Wo(d)$ at the point
  $[\phx(\f)]$ is naturally identified with $\Ext^1_X(F,F)$,
  respectively with $\Ext^2_X(F,F)$;
\item \label{due-d} the varieties $\Mo(d)$ and $\Wo(d)$ are birational, both
  generically smooth of dimension $2d-9$.
\end{enumerate}
\end{thm}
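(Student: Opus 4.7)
My plan is to apply $\Hom_X(-, F)$ to the canonical resolution \eqref{resolution}, which we write as
\[
0 \to A_F \otimes \UU_+^* \xrightarrow{\iota} \ph(\cF) \xrightarrow{\rho} F \to 0,
\]
where $\cF = \phx(F)$ and $A_F = \Ext^2_X(F, \UU_+)^* \cong \HH^0(\Gamma, \cF)$ by Lemma \ref{sette}. Using the adjunctions $\phs \dashv \ph \dashv \phx$ together with $\phs(\UU_+^*) \cong \OO_\Gamma$, $\phx(\UU_+^*) = 0$ from Lemma \ref{quandofiniremo}, and the fully faithfulness of $\ph$, one obtains the functorial identifications $\Ext^i_X(\ph(\cF), F) \cong \Ext^i_\Gamma(\cF, \cF)$ and $\Ext^i_X(\UU_+^*, \ph(\cF)) \cong \HH^i(\Gamma, \cF)$. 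Applying $\Hom_X(\UU_+^*, -)$ to the resolution and invoking the exceptionality $\Ext^{>0}_X(\UU_+^*, \UU_+^*) = 0$ yields $\Ext^i_X(\UU_+^*, F) \cong \HH^i(\Gamma, \cF)$ for $i \geq 1$, while Lemma \ref{terzo} gives $\Hom_X(\UU_+^*, F) = 0$. Inserting these vanishings (and $\Ext^2_\Gamma(\cF, \cF) = 0$) into the long exact sequence from $\Hom_X(-, F)$ collapses it to
\[
0 \to \Ext^1_X(F, F) \to \Ext^1_\Gamma(\cF, \cF) \xrightarrow{\beta_1} A_F^\vee \otimes \HH^1(\Gamma, \cF) \to \Ext^2_X(F, F) \to 0.
\]

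The crucial step for part \eqref{uno-d} is to identify $\beta_1$ with the Petri transpose $\pi^\top_\cF$ of \eqref{dual-petri}. Unwinding the construction, $\beta_1$ is pre-composition with $\iota$, followed by the canonical isomorphisms above. Under the adjunction $\phs \dashv \ph$, the map $\iota: A_F \otimes \UU_+^* \to \ph(\cF)$ corresponds to a map $A_F \otimes \OO_\Gamma \to \cF$; since $\iota$ represents the identity element of $A_F^\vee \otimes \Hom_X(\UU_+^*, \ph(\cF)) \cong \End(A_F)$ (this is exactly the normalization built into the isomorphism $A_F \cong \HH^0(\Gamma, \cF)$ of Lemma \ref{sette}), its adjoint is the evaluation map $\HH^0(\Gamma, \cF) \otimes \OO_\Gamma \to \cF$. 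Thus $\beta_1(\xi)$ is the composition of $\xi \in \Ext^1_\Gamma(\cF, \cF)$ with the evaluation, which by the description of \eqref{petri}--\eqref{dual-petri} is precisely $\pi^\top_\cF(\xi)$. Identifying $T_\cF \Wo(d) = \ker \pi^\top_\cF$ at smooth points, and obstructions as a subspace of $\coker \pi^\top_\cF$, yields part \eqref{uno-d}.

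For part \eqref{due-d}, Theorem \ref{add-line} provides $\dim \Mo(d) = 2d-9$ with generic smoothness, so $\Ext^2_X(F, F) = 0$ on a dense open subset of $\Omega(d)$ (consistent with the Riemann-Roch identity $\chi(F, F) = 10 - 2d$ derived from \eqref{RRg7}). By part \eqref{uno-d} the Petri transpose $\pi^\top_\cF$ is surjective at $\cF = \phx(F)$ for such $F$, so $\Wo(d)$ is smooth at $\cF$ of dimension $\dim \ker(\pi^\top_\cF) = 2d - 9 = \rho(d-5, 5d-24, 2d-11)$, hence generically smooth of the same dimension as $\Mo(d)$. The differential $d\varphi_F$ coincides with the inclusion $\Ext^1_X(F, F) \hookrightarrow \Ext^1_\Gamma(\cF, \cF)$ from the four-term sequence, so $d\varphi_F$ is an isomorphism onto $T_\cF \Wo(d)$ and $\varphi$ is \'etale at the generic point. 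To upgrade \'etaleness to birationality I would construct an inverse $\vartheta: \Wo(d) \dashrightarrow \Mo(d)$ sending a general $\cF$ to the cokernel of the natural evaluation map $\HH^0(\Gamma, \cF) \otimes \UU_+^* \to \ph(\cF)$, which is uniquely determined up to scalar by $\GL$-equivariance (in the spirit of the proof of Theorem \ref{thm:cubic-iso}); checking that $\varphi \circ \vartheta$ and $\vartheta \circ \varphi$ are the identity on dense open subsets then yields birationality.

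The principal obstacle is the identification of $\beta_1$ with $\pi^\top_\cF$: although conceptually natural, this requires careful bookkeeping of the adjunctions $\phs \dashv \ph \dashv \phx$ and pinning down the normalization built into the isomorphism $A_F \cong \HH^0(\Gamma, \cF)$ via the universality of $\iota$. A secondary technical issue is verifying that $\vartheta$ is well-defined on a dense open subset of $\Wo(d)$: one must check that for generic $\cF$ the complex $\ph(\cF)$ is concentrated in degree zero and that the evaluation map is injective with a torsion-free cokernel having the expected Chern classes $(2, 1, d)$.
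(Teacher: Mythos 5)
Your proposal is correct and follows essentially the same route as the paper: the paper likewise applies $\Hom_X(-,F)$ to the resolution \eqref{resolution}, identifies the resulting connecting map $\eta_F=\Ext^1_X(\zeta_F,F)$ with $\pi^\top_\cF$ by recognizing $\zeta_F$ as $\HHH^0(\ph(e_\cF))$ via the $\GL(A_F)$-invariance argument, and constructs the inverse $\vartheta(\cF)=\cok(\HHH^0(\ph(e_\cF)))$ on the open locus where $e_\cF$ is surjective with kernel $\cF^*$ and $\ph(\cF)$ is a sheaf. The only cosmetic difference is that you derive $\Ext^i_X(\UU_+^*,F)\cong\HH^i(\Gamma,\cF)$ from the resolution and adjunction, whereas the paper uses the spectral sequence \eqref{ss1} applied to $\ph(\OO_\Gamma)$ together with Lemma \ref{secondo}.
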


\begin{proof}
The goal of our proof will be to construct an inverse map $\vartheta$
to the morphism $\varphi$, defined on a suitable open subset of our
component $\Wo(d)$.

We let $B(d)$ be the subset of $\Wo(d)$ consisting of those sheaves
$\cF$ such that:
\begin{enumerate}[a)]
\item \label{serve-a} the following natural evaluation map is surjective:
  \[
  e_\cF=e_{\OO,\cF} : \HH^0(\Gamma,\cF) \ts \OO_\Gamma \to \cF
  \]
\item \label{serve-b} the kernel of $e_\cF$ is isomorphic to $\cF^*$;
\item \label{serve-c} the complex $\ph(\cF)$ is concentrated in degree zero.
\end{enumerate}
Then we would like to define $\vartheta$ over $B(d)$ in the following way:
\begin{align*}
  \vartheta: B(d) & \to \Mo_X(2,1,d) \\
  \cF & \mapsto \cok(\HHH^0(\ph(e_\cF))).
\end{align*}

Let us prove that the sheaf $F=\cok(\HHH^0(\ph(e_\cF)))$ lies in $\Mo_X(2,1,d)$.
First note that the duality \eqref{gro-gamma} gives the isomorphism:
\[
\ph(\cF^*)[1] \cong \ph(\cF)^*(1),
\]
where these are both locally free sheaves by \eqref{serve-c} and
by the same argument as in Proposition \ref{cosonuovo}.
By \eqref{serve-a} and \eqref{serve-b} we can apply the functor $\ph$
to the exact sequence:
\[
0 \to \cF^* \xr{e_\cF^\top} \HH^0(\Gamma,\cF) \ts \OO_\Gamma \xr{e_\cF} \cF \to 0,
\]
and get (by \eqref{serve-c}) a long exact sequence of the form:
\[
0 \to \HH^0(\Gamma,\cF) \ts \UU_+^* \xr{\HHH^0(\ph(e_\cF))} \ph(\cF) \to \ph(\cF)^*\ts \OO_X(1) \to
\HH^0(\Gamma,\cF)^* \ts \UU_+(1) \to 0.
\]
The sheaf $F$ is then the image of the middle map in the above
sequence.
By \cite[Proposition 1.1]{hartshorne:stable-reflexive} the sheaf $F$
is reflexive, and sits into the following
exact sequence:
\begin{equation}\label{tappeto}
0 \to \HH^0(\Gamma,\cF) \ts \UU_+^* \xr{\HHH^0(\ph(e_\cF))} \ph(\cF) \to F \to 0.
\end{equation}

By Grothendieck-Riemann-Roch one can calculate the rank and the Chern
classes of $\ph(\cF)$, and deduce from the above exact sequence that
the sheaf $F$ has rank $2$ and $c_1(F)=1$, $c_2(F)=d$, $c_3(F)=0$.
Therefore $F$ is locally free (because it is reflexive with vanishing $c_3$), so it is also stable, once we prove
$\Hom_X(\OO_X(1),F)=0$.
Recall that $\Ext^k_X(\OO_X(1),\UU^*_+)=0$ for any integer $k$.
Further, it is easy to see that $\phs(\OO_X(1))=0$.
Then, applying the functor $\Hom_X(\OO_X(1),-)$ to \eqref{tappeto}, we have:
\[
\Hom_X(\OO_X(1),F)\cong \Hom_X(\OO_X(1),\ph(\cF))\cong
\Hom_\Gamma(\phs(\OO_X(1)),\cF)=0.
\]
We have thus proved that $F$ lies in $\Mo_X(2,1,d)$, so the map
$\vartheta$ is defined on $B(d)$.

\vspace{0.2cm}

Applying the functor $\phx$ to \eqref{tappeto} and using \eqref{fiics},
we have $\phx(F)\cong \cF$, so $\varphi(\vartheta(\cF))=\cF$.
Let us now show that $\vartheta \circ \varphi$ is the identity over the
open subset of $\Omega(d)$ consisting of locally free sheaves.
For any sheaf $E$ in $\Omega(d)$,
the sheaf $\cF = \phx(E)$ lies in $\Wo(d)$ and satisfies \eqref{serve-c}.
Assume now in addition that $E$ is locally free.
Set again $A_E=\Ext^2_X(E,\UU_+)^*$, and keep in mind the natural
isomorphism of Lemma \ref{sette}:
\[
A_E \cong \HH^0(\Gamma,E) \cong \Hom_X(\UU^*_+,\ph(\cF)).
\]
Further, by Lemma \ref{lem-gg}, since $E$ is a locally free
sheaf in $\Mo(d)$, we have that $\cF$ satisfies also \eqref{serve-a} and \eqref{serve-b}.
We look at the resolution
\[
0 \to A_E^* \ts \UU_+^* \xr{\zeta_E} \ph(\cF) \to E \to 0
\]
given by Proposition \ref{cosonuovo}.
Notice that the map
$\zeta_E$ agrees, up to a non-zero scalar, with the map $\cH^0(\ph(e_\cF))$.
Indeed, both such maps are non-zero elements of $A_E \ts A_E^*$, invariant under the natural
$\GL(A_E)$-action, and such invariant is either zero, either unique up to non-zero scalar.
Therefore, we have:
\[
E \cong \cok(\HHH^0(\ph(e_\cF))),
\]
which proves that $\vartheta \circ \varphi$ gives back $E$ when applied to $E$.

\vspace{0.2cm}

We have thus proved that $B(d)$ is isomorphic to an open subset of
$\Omega(d)$, which in turn is open in $\Mo(d)$, and
by Theorem \ref{add-line} $\Mo(d)$ is generically smooth of dimension  $2d-9$.
This will prove \eqref{due-d} once we show that $B(d)$ is dense in $\Wo(d)$.
In turn, it will follow from \eqref{uno-d} that $\Wo(d)$ is
smooth of dimension $2d-9$ at any point $\cF = \phx(F)$ such that $\Mo(d)$
is smooth at $F$, which implies that $B(d)$ is dense in $\Wo(d)$.

Therefore, it only remains to setup the natural identifications of tangent and
obstruction spaces required for \eqref{uno-d}. Let us accomplish this task.
By using adjunction, we have the natural isomorphisms:
\begin{align}
& \nonumber \Ext^1_X(\ph(\cF),F) \cong
\Ext^1_\Gamma(\cF,\cF), \\
& \label{H1cE} \Ext^1_X(\UU^*_+,F) \cong
\Ext^1_X(\ph(\OO_{\Gamma}),F) \cong
\Ext^1_{\Gamma}(\OO_{\Gamma},\cF).
\end{align}
Here, to prove \eqref{H1cE}, by \eqref{ss1} and \eqref{phO} it suffices to show
$\Ext^2_X(\UU_+(1),F)=0$ and $\Ext^3_X(\UU_+(1),F)=0$.
By Serre duality we have $\Ext^k_X(\UU_+(1),F)^*\cong\Ext^{3-k}_X(F,\UU_+)$, which
vanishes, for $k=2,3$, by Lemma \ref{secondo}.

Now, applying the functor $\Hom_X(-,F)$ to \eqref{resolution}
we obtain a long exact sequence:
\begin{align*}
&&\Hom_X(\UU^*_+,F) \ts A_F^*\to \Ext^1_X(F,F) \to \Ext^1_X(\ph(\cF),F) \xr{\eta_F}&&\\
&&\xr{\eta_F} \Ext^1_X(\UU^*_+,F) \ts A_F^* \to \Ext^2_X(F,F)\to \Ext^2_X(\ph(\cF),F),&&
\end{align*}
where the map $\eta_F$ is defined as $\Ext^1_X(\zeta_F,F)$.
Note that the first and the last terms of the above sequence vanish, hence we can identify
$\Ext^1_X(F,F)$ with the kernel of the map $\eta_F$ and $\Ext^2_X(F,F)$ with the cokernel of $\eta_F$.
But since $\zeta_F = \HHH^0(\ph(e_\cF))$, we have:
\[
\eta_F = \Ext^1_X(\ph(e_\cF),F) = \Ext^1_\Gamma(e_\cF,\phx(F)) =
\Ext^1_\Gamma(e_\cF,\cF) = \pi_\cF^\top.
\]

In view of the interpretation of the kernel and cokernel of
$\pi^\top_\cF$ (see Section \ref{befana}), we have thus constructed the required identification of the tangent space
to $W^{2\,d-11}_{d-5,5\,d-24}$ at the point $\cF$ (i.e. of
$\ker(\pi^\top_\cF)$) with $\Ext^1_X(F,F)$.
The same argument identifies the obstruction space with $\Ext^2_X(F,F)$.
\end{proof}

Working out a relative version of the construction of the previous
theorem, we get the following result.

\begin{corol} \label{thm:fine}
   The moduli space $\Omega(d)\subset \Mo(d)$ is fine.
\end{corol}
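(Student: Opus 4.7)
The plan is to relativize the construction of the inverse map $\vartheta$ from the proof of Theorem \ref{thm:moduli-brill}. The first ingredient is a universal family on the Brill--Noether side: since $5d-24=5(d-5)+1$, we have $\gcd(d-5,\,5d-24)=1$, so the moduli space $\Mos_\Gamma(d-5,5d-24)$ admits a genuine universal bundle by the classical coprimality result of Ramanan. Pulling it back along $\varphi\times\mathrm{id}_\Gamma:\Omega(d)\times\Gamma\to\Wo(d)\times\Gamma$ gives a family $\cF_\Omega$ on $\Omega(d)\times\Gamma$ whose restriction to $\{F\}\times\Gamma$ is $\phx(F)$.

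Next, I would introduce the triple product $\Omega(d)\times X\times\Gamma$ with projections $p_{12},p_{13},p_{23}$ onto the pairwise products, and form the relative integral transform
\[
\mathbold{P}:=\RR p_{12,*}\bigl(p_{13}^{*}\cF_\Omega\otimes p_{23}^{*}\EE\bigr).
\]
On the fiber $\{F\}\times X$, this restricts to $\ph(\phx(F))$, which by Proposition \ref{cosonuovo} is a locally free sheaf concentrated in degree zero. The invariants of this fiber are constant over $\Omega(d)$, so cohomology-and-base-change implies that $\mathbold{P}$ is itself locally free on $\Omega(d)\times X$ and that its formation commutes with arbitrary base change.

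Denote by $q_\Omega:\Omega(d)\times\Gamma\to\Omega(d)$ the projection. By Lemma \ref{sette}, the dimension $\hh^0(\Gamma,\phx(F))=2d-10$ is constant on $\Omega(d)$, so $\cA:=q_{\Omega,*}\cF_\Omega$ is a rank $2d-10$ vector bundle on $\Omega(d)$ whose fiber over $F$ is $\HH^0(\Gamma,\phx(F))$. The relative analog of the evaluation map $e_{\cF_\Omega}$, combined with the adjunction counit for $(\phs,\ph)$, yields a morphism $\cA\boxtimes\UU_{+}^{*}\to\mathbold{P}$ whose restriction to each $\{F\}\times X$ coincides, up to a nonzero scalar, with the map $\zeta_F$ of the canonical resolution \eqref{resolution}. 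I would then set
\[
\mathbold{F}:=\cok\bigl(\cA\boxtimes\UU_{+}^{*}\to\mathbold{P}\bigr),
\]
so that $\mathbold{F}|_{\{F\}\times X}\cong F$ by \eqref{resolution}. Constancy of the Hilbert polynomial over $\Omega(d)$ forces $\mathbold{F}$ to be flat, and hence $\mathbold{F}$ is a universal family witnessing fineness of $\Omega(d)$.

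The main obstacle is to verify that every step commutes with base change, so that the fiberwise identification with the canonical resolution matches the global construction. This hinges on the constancy of the cohomology dimensions appearing in Lemmas \ref{primo}--\ref{sette} and Proposition \ref{cosonuovo}: once these are in place, standard base-change theorems give that $\mathbold{P}$, $\cA$, and the cokernel $\mathbold{F}$ all behave fiberwise as expected, and the argument of Theorem \ref{thm:moduli-brill} can be transported to the relative setting verbatim.
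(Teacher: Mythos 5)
Your proposal is correct and follows essentially the same route as the paper: both exploit the coprimality of $d-5$ and $5d-24$ to obtain a universal bundle $\PP$ on $\Gamma\times\Wo(d)$, pull it back to the triple product, and relativize the canonical resolution \eqref{resolution} so that the universal family on $X\times\Omega(d)$ arises as the cokernel of a relative version of $\zeta_F$. The paper packages this slightly more compactly by pushing forward the map $\alpha\boxtimes 1$ built from \eqref{kuz-exact} tensored with the pulled-back $\PP$, whereas you separate out the bundle $\cA$ of sections and the transform $\mathbold{P}$ before forming the cokernel, but the content is the same.
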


\begin{proof}
For any $d\geq 6$, we let $\Po(d)$ be the moduli space of stable
vector bundles on
$\Gamma$ of rank $d-5$ and degree $5\,d-24$.
Thus $\Wo(d)$ is a subvariety of $\Po(d)$.
Since the rank and the degree are coprime, it is well known
that this moduli space is fine.
So we denote by $\PP$ the universal bundle over $\Gamma
\times
\Po(d)$, and by abuse of notation its restriction to the product $\Gamma
\times \Wo(d)$.

We would like to exhibit a universal sheaf $\FF$ over $X
\times \Omega(d)$
such that, for a given closed point $z$ of $\Omega(d)$
representing a
stable sheaf $F$, the restriction of $\FF$ to $X\times
\{z\}$ is
isomorphic to $F$.

Recall by Theorem \ref{thm:moduli-brill} that $\varphi$ maps
$\Omega(d) \subset \Mo(d)$
to an open subset of $\Wo(d)$.
Consider the projections:
\[
\xymatrix@-2ex
{
& X \times \Gamma \times \Omega(d) \ar_{p \times 1}[dl]
\ar^{q \times \varphi}[dr]  \\
X \times \Omega(d) && \Gamma \times \varphi(\Omega(d))
}
\]

We consider the pull-back to $X \times \Gamma \times
\Omega(d)$ of the map $\alpha : \UU^*_+ \to \EE$ of
\eqref{kuz-exact}.
We tensor this map with $(q\times \varphi)^*(\PP)$.
We have thus a morphism:
\[
\UU^*_+ \boxtimes (q\times \varphi)^*(\PP) \xr{\alpha
\boxtimes 1} \EE
\boxtimes (q\times \varphi)^*(\PP).
\]

We define the universal sheaf $\FF$ as the cokernel of the
map
$(p\times 1)_*(\alpha \boxtimes 1)$.
Let us verify that $\FF$ has the desired properties.
So choose a closed point $z \in \Omega(d)\subset \Mo(d)$,
and consider the
corresponding sheaf $F_z$ on $X$ and the vector bundle
$\PP_{\varphi(z)} \cong \phx(F_z)$ on
$\Gamma$.
Notice that the sheaf $(q\times
\varphi)^*(\PP_{\varphi(z)})$ is just
$q^*(\phx(F_z))$. Then, evaluating at the point $z$ the map
$(p\times
1)_*(\alpha \boxtimes 1)$ we obtain the map:
\[
\HH^0(\Gamma,\phx(F_z)) \ts \UU_+^* \to \ph(\phx(F_z)).
\]

Recall the natural isomorphism $\HH^0(\Gamma,\phx(F_z))\cong
\Ext^2_X(F_z,\UU_+)^*$, and note that, by functoriality,
this map must agree
with the map $\zeta_{F_z}$ given by the resolution
\eqref{resolution}.
Thus its cokernel is isomorphic to $F_z$.
\end{proof}

\subsection{The moduli space $\Mo_X(2,1,6)$}

Here we focus on the moduli space $\Mo_X(2,1,6)$, and we prove that it
is isomorphic to the Brill-Noether locus $W^1_{1,6}$ on the
homologically projectively dual curve $\Gamma$.
This makes more precise a result of Iliev-Markushevich,
\cite{iliev-markushevich:sing-theta:asian}.
Then we investigate the subvariety of
$\Mo_X(2,1,6)$
consisting of vector bundles which are not globally
generated.
We will see that these bundles are in one-to-one
correspondence with
non-reflexive sheaves in $\Mo_X(2,1,6)$. Finally we will see
that these
two subsets are interchanged by a natural involution of
$\Mo_X(2,1,6)$. Here is our main result.

\begin{thm} \label{thm:lune}
  Let $X$ be a smooth prime Fano threefold of genus $7$.
\begin{enumerate}[A)]
 \item \label{lune-A}
  The map $\varphi:F\mapsto \phx(F)$ gives an isomorphism
  of the moduli space $\Mo_X(2,1,6)$ onto the Brill-Noether
  variety $W^1_{1,6}$. In particular, $\Mo_X(2,1,6)$ is a
  connected threefold.
  Moreover it is a fine moduli space.
\item\label{lune-B}
  If $X$ is not exotic, then $\Mo_X(2,1,6)$ has at most finitely many
  singular points.
  If $X$ is general, then  $\Mo_X(2,1,6)$ is smooth and irreducible.
\end{enumerate}
\end{thm}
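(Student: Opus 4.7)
Part (A): The plan is to extend the birational morphism $\varphi\colon F\mapsto \phx(F)$ of Theorem \ref{thm:moduli-brill} into a global isomorphism for $d=6$. Since $m_7+1 = 6$, Proposition \ref{minimum+1} applies to every $F\in \Mo_X(2,1,6)$, forcing $\HH^1(X,F(-1))=0$; Proposition \ref{cosonuovo} and Lemma \ref{sette} then make $\phx(F)$ into a line bundle of degree $6$ with two sections, i.e.\ a point of $W^1_{1,6}$, and a relative version of this construction promotes $\varphi$ to a morphism on the whole moduli space. Injectivity on closed points splits according to the dichotomy of Proposition \ref{minimum+1}: if $F$ is locally free, the canonical resolution \eqref{resolution} recovers $F$ from $\phx(F)$ as in the proof of Theorem \ref{thm:moduli-brill}; if $F$ is not locally free, applying $\phx$ to \eqref{doubledual} yields a triangle identifying $\phx(F)$ with $\cL'(y)$, where $\cL'=\phx(\OO_L)[-1]\in W^1_{1,5}$ and $y\in\Gamma\cong \Mo_X(2,1,5)$, from which the pair $(y,L)$ and thus $F$ is uniquely reconstructed.

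The inverse $\vartheta$ is then built on the two natural strata of $W^1_{1,6}$. On the locus of globally generated $\cL$, the cokernel construction $\cok(\HHH^0(\ph(e_\cL)))$ of Theorem \ref{thm:moduli-brill} goes through verbatim---the semistability step of Claim \ref{dupalle} is trivial for the line-bundle kernel $\cL^*$---and produces a locally free $F\in\Mo_X(2,1,6)$. On the complementary locus, since $\Gamma$ carries no $g^1_3$ nor $g^1_4$, every non-globally-generated $\cL\in W^1_{1,6}$ factors as $\cL=\cL'(y)$ with $\cL'\in W^1_{1,5}$ globally generated and $y$ its unique base point; by Lemma \ref{OL} and Theorem \ref{summary} this pair corresponds to $(L,\EE_y)\in \sH^0_1(X)\times \Mo_X(2,1,5)$, and global generation of $\EE_y$ forces the splitting type $\EE_y\otimes \OO_L\cong \OO_L\oplus \OO_L(1)$, so $\hom_X(\EE_y,\OO_L)=1$ with the unique nonzero map surjective, and we set $\vartheta(\cL) = \ker(\EE_y\twoheadrightarrow\OO_L)$. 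Both constructions coincide with $\varphi^{-1}$ on their respective strata and glue to a morphism inverse to $\varphi$, proving the isomorphism. Connectedness of $W^1_{1,6}$ follows from the Fulton-Lazarsfeld theorem since $\rho(1,6,1)=3\geq 1$, and fineness is obtained by pulling back a Poincar\'e line bundle on $\Gamma\times\Pic^6(\Gamma)$ and replaying the universal construction of Corollary \ref{thm:fine} on all of $\Mo_X(2,1,6)$.

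For part (B), I would transfer the singular locus of $\Mo_X(2,1,6)$ via $\varphi$ to that of $W^1_{1,6}$ using the identification of tangent and obstruction spaces from Theorem \ref{thm:moduli-brill}, reducing the claim to the (non-)injectivity of the Petri map $\pi_\cL$ on $W^1_{1,6}$. For globally generated $\cL$, a diagram chase on $0\to\cL^*\to\OO_\Gamma^2\to\cL\to 0$ identifies $\ker\pi_\cL$ with $\HH^0(\Gamma,\cL^{-2}\otimes\omega_\Gamma)$, which is nonzero exactly when $\cL$ is a theta characteristic with at least two sections, a finite set. On the non-globally-generated stratum $\cL=\cL'(y)$, an analogous computation relates the singularity of $\cL$ to the effectiveness of $\cL'^{-2}\otimes\omega_\Gamma$; by Remark \ref{marc}, non-exoticity of $X$ forces the set of such $\cL'\in W^1_{1,5}$ to be finite, and an additional residual condition on $y$ keeps the non-gg singular locus finite. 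For general $X$ the curve $\Gamma$ is a general curve of genus $7$, so the Gieseker-Petri theorem makes $\pi_\cL$ injective for every $\cL$; hence $\Mo_X(2,1,6)\cong W^1_{1,6}$ is smooth of dimension $3$, with irreducibility following from Fulton-Lazarsfeld's theorem applied to $\rho\geq 1$. The main difficulty is the Petri computation on the non-globally-generated stratum in part (B): ensuring that non-exoticity alone suffices to cut the singular locus down to a finite set rather than leaving a curve swept out as $y$ varies in $\Gamma$.
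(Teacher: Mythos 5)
Your part~\ref{lune-A} follows essentially the same route as the paper: well-definedness via Propositions \ref{minimum+1} and \ref{cosonuovo}, the inverse $\vartheta$ built separately on the globally generated stratum (cokernel of $\HHH^0(\ph(e_\cL))$) and on the non-globally-generated stratum (kernel of $\EE_y \twoheadrightarrow \OO_L$, read off from $\cL=\cM(y)$ with $\cM\in W^1_{1,5}$), and fineness by a Poincar\'e bundle. One caveat: you invoke Lemma \ref{sette} to get two sections on $\phx(F)$, but that lemma is stated only for $d\geq 7$ because it rests on Lemma \ref{terzo} ($\Hom_X(\UU_+^*,F)=0$), which is not available at $d=6$. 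The paper instead gets $\hh^0(\Gamma,\phx(F))\geq 2$ directly from \eqref{ferra} (locally free case) or \eqref{non-gg} (non-locally-free case), and pins it to exactly $2$ using the emptiness of $W^2_{1,6}$; the identification $\Ext^2_X(F,\UU_+)^*\cong\Hom_X(\UU_+^*,\ph(\phx(F)))$ needed for $\vartheta\circ\varphi=\mathrm{id}$ is then obtained by the dimension count $\ext^2_X(F,\UU_+)=2=\hh^0(\Gamma,\phx(F))$ rather than by Lemma \ref{terzo}. You should patch your argument accordingly.

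In part~\ref{lune-B} there is a genuine gap, which you yourself flag: the finiteness of the singular locus on the non-globally-generated stratum is asserted via ``an additional residual condition on $y$'' but never established, and without it the singular points could a priori sweep out a curve as $y$ varies. Two ways to close it. The direct one: for $\cL=\cM(y)$ the base-point-free pencil trick gives $\ker(\pi_\cL)\cong \HH^0(\Gamma,\cM^{-2}\ts\omega_\Gamma(-y))$, a line bundle of degree $1$; non-vanishing forces $\cM^{-2}\ts\omega_\Gamma$ effective (finitely many such $\cM$ by non-exoticity and Remark \ref{marc}), and since $\Gamma$ is not hyperelliptic $\hh^0(\Gamma,\cM^{-2}\ts\omega_\Gamma)\leq 1$, so $y$ must lie in the support of a single degree-$2$ divisor --- finitely many choices. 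The paper's route is slightly different but equivalent: it shows a singular $\cL\in{\sf G}$ (after possibly applying the involution $\tau$) must lie in ${\sf C}\cap{\sf G}$ with $\cM\subset\cL\subset\tau(\cM)$, hence $\cM$ is a singular point of $W^1_{1,5}\cong\sH^0_1(X)$ (finite by non-exoticity), and then appeals to Proposition \ref{prop:otto}, which exhibits ${\sf C}\cap{\sf G}$ as a finite cover of $W^1_{1,5}$. Either completion is needed to make your argument a proof.
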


We prove now the first part of the theorem, while we postpone the
second part to the end of the subsection.

\begin{proof}[Proof of Theorem \ref{thm:lune}, part \ref{lune-A}]
First of all the map $\varphi:F\mapsto \phx(F)$ is well-defined.
Indeed, let $\f$ be any sheaf in $\Mo_X(2,1,6)$.
By Proposition \ref{agglomerato}, part \eqref{annullarsi}, we know that $F$ satisfies the hypothesis \eqref{ipotesona}.
Then by Proposition \ref{cosonuovo}, $\phx(F)$ is a line bundle of
degree $6$ on $\Gamma$. Set $\cL = \phx(F)$.
We have to prove that $\cL$ admits at least two independent global sections.
If $F$ is locally free, by Lemma \ref{lem-gg} we have
that $\cL$ is globally generated. Hence $\hh^0(\Gamma,\cL)=1$ would imply
$\cL \cong \OO_\Gamma$, which is impossible, i.e. $\hh^0(\Gamma,\cL)
\ge 2$.
This means that $\phx(\f)$ lies in $W^1_{1,6}$.
If $F$ is not locally free, then it fits in the exact sequence
\eqref{doubledual}.
Recall that by Proposition \ref{OL} we know that $\phx(\OO_L)[-1]$ is a line bundle
$\cM$ contained in $W^{1}_{1,5}$.
Hence, applying $\phx$ to the exact sequence \eqref{doubledual}, we obtain:
\begin{equation} \label{non-gg}
0 \to \cM \to \cL \to \OO_y \to 0,
\end{equation}
where $y$ is a point of $\Gamma$.
Therefore we have again $\hh^0(\Gamma,\cL)\geq \hh^0(\Gamma,\cM) \geq 2$,
and $\phx(\f)$ lies in $W^1_{1,6}$.
Note that the equality $\hh^0(\Gamma,\cL) = 2$
must be attained for all $\cL$, since $W^2_{1,6}$ is empty in view
of Mukai's result (see \cite[Table 1]{mukai:curves-symmetric-I}).
Note that in this case the open subset $\Omega(6)$ coincides in fact with all of $\Mo_X(2,1,6)$.

Now we want to provide an inverse map $\vartheta:W^1_{1,6}\to\Mo_X(2,1,6)$
of $\varphi$.
Take a line bundle $\cL$ in $W^1_{1,6}$, and denote again by
$e_\cL=e_{\OO_\Gamma,\cL}:\HH^0(\Gamma,\cL)\ts \OO_\Gamma \to \cL$
the natural evaluation map.
We distinguish two cases according to whether $\cL$ is globally generated or not.

In the former case, we have an exact
sequence:
\begin{equation}
  \label{basta}
  0 \to \cL^* \to \HH^0(\Gamma,\cL) \ts \OO_\Gamma \xr{e_{\cL}} \cL \to 0.
\end{equation}
Since, for any $x \in X$ the vector bundle $\EE_x$ is stable, we have
$\HH^0(\Gamma,\EE_x\ts \cL^*)=\HH^1(\Gamma,\EE_x\ts
\cL)=0$.
Hence the line bundle $\cL$ satisfies all the conditions
\eqref{serve-a}, \eqref{serve-b},
\eqref{serve-c} of the proof of Theorem \ref{thm:moduli-brill}.
Then the same proof of such theorem allows us to define
$\vartheta(\cL)=\cok(\HHH^0(\ph(e_\cL)))$ and to prove that
$\varphi(\vartheta(\cL))=\cL$.

It remains to find an inverse image via $\varphi$ of a non-globally generated sheaf $\cL$.
In this case, the image $\cM \subset \cL$ of $e_\cL$ must be a line bundle, with
$\hh^0(\Gamma,\cM) = \hh^0(\Gamma,\cL)=2$.
Then  $\cM$ must lie in $W^1_{1,5}$, since $\Gamma$ has no $g^1_4$ by \cite{mukai:curves-symmetric-I}.
We have an exact sequence of the form \eqref{non-gg}, for
some $y\in \Gamma$.
Applying the functor $\ph$ to this sequence, by Proposition \ref{OL}
and formula \eqref{rigatoni}, we obtain:
$$0\to A_L\ts\UU^*_+ \xr{\HHH^0(\ph(e_\cL))} \ph(\cL)\to\EE_y\to\OO_L\to 0,$$
where $L$ is the line contained in $X$ such that $\cM\cong\phx(\OO_L)[-1]$
and $A_L=\HH^0(\Gamma,\cM)\cong\HH^0(\Gamma,\cL)$.
It is easy to see that the image of the middle map in the exact sequence above
is a sheaf $F\in \Mo_X(2,1,6)$.
We define again $\vartheta(\cL)=\cok(\HHH^0(\ph(e_\cL)))$ and
since $\phx(F)\cong\cL$, it follows $\varphi(\vartheta(\cL))=\cL$.
Note that the map $\vartheta$ is defined in the same way as for globally generated bundles.

To show that $\vartheta\circ\varphi$ is the identity on $\Mo_X(2,1,6)$,
we need to prove that the map $\zeta_F$ appearing in the resolution
\eqref{resolution} provided by Proposition \ref{cosonuovo} agrees up
to a scalar with $\HHH^0(\ph(e_\cL))$.
Applying the functor $\Hom_X(\UU_+^*,-)$ to \eqref{resolution}, and
using adjunction and \eqref{fistar}, we obtain
$$\Ext_X^2(\f,\UU_+)^* \subseteq \Hom(\UU_+^*,\ph(\phx(\f)))\cong \HH^0(\Gamma,\phx(F)).$$
Now recall that $\dim\Ext_X^2(\f,\UU_+)=2$, by \eqref{elele}, and
$\hh^0(\Gamma,\phx(F))\leq2$, since $W^2_{1,6}$ is empty, hence we
conclude that
$\Ext_X^2(\f,\UU_+)^* \cong \Hom(\UU_+^*,\ph(\phx(\f))),$ and
the map $\zeta_F$ in the resolution \eqref{resolution} is
uniquely determined. This proves that $\vartheta(\varphi(F))=F$.

Now, with the same proof of Corollary \ref{thm:fine} one can show that the moduli space
$\Mo_X(2,1,6)$ is fine.
The fact that $W^1_{1,6}$ is a connected threefold is well-known, see
for instance \cite[IV, Theorem 5.1 and V, Theorem 1.4]{acgh}.
This completes the proof of part \eqref{lune-A}.
\end{proof}

Now we will analyze the space $\Mo_X(2,1,6)$ in greater detail.

\begin{lem} \label{lem:F-not-gg}
Let $F$ be a sheaf in $\Mo_X(2,1,6)$. Then either $F$ is
globally
generated, or there exists an exact sequence:
\begin{equation}
  \label{eq:Ky}
  0 \to I \to F \to \OO_{L}(-1) \to 0,
\end{equation}
where $L$ is a line contained in $X$ and $I$ is a sheaf
fitting into:
\begin{equation}
  \label{eq:KE}
  0 \to \EE_y^* \to \HH^0(X,F) \ts \OO_X \to  I \to 0,
\end{equation}
for some $y \in \Gamma$.
\end{lem}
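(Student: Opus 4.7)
The strategy is to analyze the evaluation map
\[
e_F : \HH^0(X, F) \otimes \OO_X \to F
\]
and read the two required sequences off its kernel, image, and cokernel. By Riemann-Roch \eqref{RRg7} combined with the vanishings \eqref{eq:veniscing} from Proposition \ref{minimum+1}, $\hh^0(X, F) = \chi(F) = 4$. Setting $I = \im(e_F)$, $K = \ker(e_F)$, $T = \cok(e_F)$, stability of $F$ forces $I$ to have rank $2$: a rank-$1$ saturated subsheaf of $F$ is a line bundle $\OO_X(a)$ with $a \leq 0$, hence admits at most one global section, whereas $\hh^0(I) \geq \hh^0(F) = 4$. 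Consequently $T$ is a torsion sheaf with $c_1(T) = 0$, and $K$ is saturated of rank $2$ in $\OO_X^{\oplus 4}$, hence reflexive, with $c_1(K) = -H$. Moreover, the composition $\HH^0(F) \to \HH^0(I) \hookrightarrow \HH^0(F)$ is the identity, so both arrows are isomorphisms; the long exact sequence of $0 \to K \to \OO_X^{\oplus 4} \to I \to 0$ then gives $\HH^0(K) = \HH^1(K) = 0$.

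Next, $K$ is stable: any rank-$1$ saturated subsheaf $\OO_X(a) \subset K$ lies in $\OO_X^{\oplus 4}$, so $a \leq 0$, and $a = 0$ would contradict $\HH^0(K) = 0$. Applying Lemma \ref{K3} to $K(1) \in \Mos_X(2, 1, c_2(K))$ yields $c_2(K) \geq 5$, while the Whitney relations $c(K) \cdot c(I) = 1$ and $c(F) = c(I) \cdot c(T)$ combine to give $c_2(K) = 6 + c_2(T) \leq 6$.

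The main obstacle is to exclude the borderline case $c_2(K) = 6$, equivalently a non-zero $0$-dimensional $T$, and more generally to rule out embedded $0$-dimensional components in $T$. Both issues are handled by the single vanishing $\HH^0(T) = 0$. Indeed, the long exact sequence of $0 \to I \to F \to T \to 0$, combined with $\HH^0(I) \cong \HH^0(F)$ and $\HH^1(F) = 0$, yields $\HH^0(T) \cong \HH^1(I)$; the sequence of $0 \to K \to \OO_X^{\oplus 4} \to I \to 0$ gives $\HH^1(I) \cong \HH^2(K)$; and Serre duality together with $K^* \cong K(1)$ (since $\det K = \OO_X(-1)$) reduces this to $\HH^1(K)^* = 0$. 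Hence $c_2(K) = 5$, so $K(1) \in \Mo_X(2, 1, 5) \cong \Gamma$ is isomorphic to $\EE_y$ for a unique $y \in \Gamma$, giving $K \cong \EE_y^*$.

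The Chern class relations then force $c_2(T) = -L$ and $c_3(T) = -P$, so $T$ is supported set-theoretically on a line $L \subset X$; the vanishing $\HH^0(T) = 0$ established above rules out embedded components, so $T$ is pure of dimension $1$, a rank-$1$ sheaf on $L \cong \p^1$ with $\chi(T) = 0$, hence $T \cong \OO_L(-1)$. Combining the sequences $0 \to \EE_y^* \to \HH^0(X, F) \otimes \OO_X \to I \to 0$ and $0 \to I \to F \to \OO_L(-1) \to 0$ yields the statement.
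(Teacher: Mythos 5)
Your overall strategy is the same as the paper's: decompose the evaluation map $e_F$ into $0 \to K \to \OO_X^{\oplus 4} \to I \to 0$ and $0 \to I \to F \to T \to 0$, pin down $c_2(K)\in\{5,6\}$ via Lemma \ref{K3} and Whitney, and then identify $K(1)$ with some $\EE_y$ and $T$ with $\OO_L(-1)$. There are, however, two genuine gaps. The first is the assertion that $c_1(T)=0$: this does not follow from $\rk(I)=2$, since a torsion quotient of $F$ by a full-rank subsheaf may be supported on a divisor, so a priori $c_1(I)$ could be $0$ rather than $1$. Everything downstream ($c_1(K)=-H_X$, $K^*\cong K(1)$, the Chern class bookkeeping) rests on this. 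The paper excludes $c_1(I)=0$ by noting that a rank-$2$ torsion-free quotient of $\OO_X^{\oplus 4}$ with $c_1=0$ would be semistable with Jordan--H\"older factors among those of $\OO_X^{\oplus 4}$, hence isomorphic to $\OO_X^{\oplus 2}$ (as $\HH^1(X,\OO_X)=0$), contradicting $\hh^0(X,I)=4$; some such argument is needed.

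The second gap is the step $\HH^2(X,K)\cong \HH^1(X,K)^*$. This is Serre duality in the form valid for locally free sheaves, but at that stage $K$ is only known to be reflexive. In general $\HH^2(X,K)^*\cong\Ext^1_X(K,\omega_X)$, which by the spectral sequence \eqref{spectr-ext} receives, besides $\HH^1(X,K^*\ts\omega_X)$, a contribution from $\HH^0(X,\EExt^1_X(K,\omega_X))$ -- a zero-dimensional sheaf whose length is governed by $c_3(K)$ and which is nonzero precisely when $K$ fails to be locally free. Since $\HH^0(T)\cong\HH^1(I)\cong\HH^2(K)$ is exactly the circle of isomorphisms you need to break, this is the crux of the proof and cannot be waved through. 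The repair is available from the paper's own toolkit: $K$ is reflexive and $K(1)$ is stable with $c_1=1$, $c_2\in\{5,6\}$, $c_3\ge 0$, so Proposition \ref{minimum} (for $c_2=5$) and Proposition \ref{minimum+1} together with reflexivity (for $c_2=6$, where a non-locally-free sheaf would be a kernel of $\EE_y\to\OO_L$, hence not reflexive) force $K$ to be locally free before any duality is invoked. This is essentially how the paper argues, treating the two values of $c_2(K)$ separately; in the case $c_2(K)=6$ it concludes directly that all Chern classes of $T$ vanish, so $T=0$ and $F$ is globally generated. With these two points supplied, the remainder of your argument is sound.
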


\begin{proof}
If the sheaf $F$ fits into the exact sequence \eqref{eq:Ky},
it cannot
be globally generated, since $\OO_L(-1)$ has no global
sections.
So let us prove the converse implication.

Assume thus that $F$ is not globally generated, let $I$
(respectively,
$T$ and $K$) be the image (respectively, the cokernel and
the kernel)
of the natural evaluation map $e_{\OO,F} : \HH^0(X,F) \ts
\OO_X \to
F$.

By Proposition \ref{agglomerato},
we have $\HH^k(X,F)=0$, for
each $k\neq 0$, and $\hh^0(X,F)=4$.
We have the exact sequences:
\begin{equation}
  \label{pancia}
  0 \to K \to \OO_X^4 \to I \to 0, \qquad 0 \to I \to F \to T \to 0,
\end{equation}
where the induced maps $\HH^0(X,\OO_X^4) \to \HH^0(X,I)$ and
$\HH^0(X,I) \to \HH^0(X,F)$ compose to $e_{\OO,F}$, in particular $\hh^0(X,I)=4$.

Let us check that the torsion-free sheaf $I$ must have rank $2$
and $c_1(I)=1$.
By stability of $\OO_X$ and $F$, we must have $c_1(I)=1$ (and in this
case $\rk(I)=2$) or $c_1(I)=0$. But in the latter case, by the
uniqueness of the graded object associated to the Jordan-H\"older filtration
of $\OO_X^4$ (\cite[Proposition 1.5.2]{huybrechts-lehn:moduli}), we must have that $I$ is semistable with Jordan-H\"older
filtration:
\[
0 \to \OO_X \to I \to \OO_X \to 0.
\]
So $I \cong \OO_X^2$ which contradicts $\hh^0(X,I)=4$.
We have proved that $I$ has rank $2$ and $c_1(I)=1$.
Since $F$ is stable, by \eqref{pancia} we deduce that $I$ is stable with $c_2(I) \geq 6$.

Now, one easily sees that $K$ is a stable reflexive
(by \cite[Proposition 1.1]{hartshorne:stable-reflexive})
sheaf of rank $2$ with $c_1(K)=-1$, $c_2(K) =
12 - c_2(I)$ (by \eqref{pancia}). Then we have $c_3(K)\ge0$ and by Lemma \ref{K3} it follows $c_2(K) \geq 5$.
This leaves two cases, namely $c_2(I)=6$ or $7$.

Assume first that $c_2(I)=7$.
Then we can apply Proposition \ref{agglomerato} to
the sheaf $K(1)$ to prove that $K$ is locally free.
It follows that $K$ is of the form
$\EE_y^*$ for some $y$ by virtue of
Theorem \ref{summary}.
It follows that $\HH^k(X,K)=0$ for all $k$ by Proposition
\ref{agglomerato}, which by \eqref{pancia} implies $\HH^k(X,I)=0$ for $k\geq 1$ and
in turn $\HH^k(X,T)=0$ for all $k$.
We obtain that $T$ is
isomorphic to $\OO_L(-1)$
by a Hilbert polynomial computation. This concludes the proof in case $c_2(I)=7$.

Let us assume now that $c_2(I)=6$, which implies $c_2(K)=6$ and
$c_3(K)\ge0$ (recall that $K$ is reflexive).
In this case Proposition \ref{agglomerato} implies that $K$ is locally
free so $c_3(K)=0$.
So by \eqref{pancia}, using that $c_1(I)=1$, $c_1(K)=-1$, $c_2(I)=c_2(K)=6$,
we compute $c_3(I)=-c_3(K)$. Hence $c_3(I)$ also vanishes. So $c_k(F)=c_k(I)$ for all
$k$ hence the sheaf $T$ has $c_k(T)=0$ for all $k$.
Therefore $T=0$ and
$F$ is globally generated.
\end{proof}

We can consider the closed subvarieties of the Brill-Noether
variety
$W^1_{1,6}$ described by the following conditions:
\begin{align}
  \label{eq:subvars}
  {\sf G} & = \{\cL \in W^1_{1,6} \, \vert \, \mbox{$\cL$ is
not globally
    generated} \} \\
  {\sf C} & = \{\cL \in W^1_{1,6} \, \vert \, \mbox{$\cL$ is
    contained in a line bundle lying in $W^2_{1,7}$} \}.
\end{align}

Moreover, we have the following involution:
\[
\tau : W^1_{1,6} \to W^1_{1,6}, \qquad \cL \mapsto
  \cL^*\ts \omega_\Gamma.
\]

A suggestion due to the referee helped us in making more precise
the statement of the following proposition. The condition that
$\Gamma$ can be represented as a plane septic with 8 nodes holds
if $\Gamma$ is general, see e.g.\ \cite{logan}.

\begin{prop} \label{prop:otto}
  The sets ${\sf C}$ and ${\sf G}$ are interchanged by the
  involution
  $\tau$, and are both isomorphic to the product
  $\Gamma \times W^1_{1,5}$.

  The intersection ${\sf C} \cap {\sf G} \subset W^1_{1,6}$
  is a finite cover of the curve $W^1_{1,5}$.
  If $\Gamma$ can be represented as a plane septic with 8 nodes, the
  degree of this cover is $16$.
\end{prop}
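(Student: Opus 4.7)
The plan is to explicitly parametrize each of ${\sf G}$ and ${\sf C}$ as $\Gamma \times W^1_{1,5}$, check that $\tau$ permutes them, and then reduce the fiber count for ${\sf C} \cap {\sf G} \to W^1_{1,5}$ to a geometric observation on the plane septic model. The first three points are essentially formal; the main content lies in the count producing $16$.

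For ${\sf G}$, I would use the base-point map. A non-globally generated $\cL \in W^1_{1,6}$ has $\hh^0(\cL) = 2$ (since $W^2_{1,6} = \emptyset$ by \cite[Table 1]{mukai:curves-symmetric-I}), and the nonempty base locus $B$ of $|\cL|$ satisfies that $\cL(-B) \in W^1_{1, 6 - |B|}$ is base-point-free; since the gonality of $\Gamma$ equals $5$ (it carries a $g^1_5$ but no $g^1_4$), this forces $|B| = 1$, say $B = y$. Setting $\cM := \cL(-y) \in W^1_{1,5}$ yields an inverse to $(\cM, y) \mapsto \cM(y)$, giving ${\sf G} \cong \Gamma \times W^1_{1,5}$. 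For ${\sf C}$, an inclusion $\cL \hookrightarrow \cN$ with $\cN \in W^2_{1,7}$ is uniquely of the form $\cL \hookrightarrow \cL(w) = \cN$ for some $w \in \Gamma$, and conversely $\cN(-w) \in W^1_{1,6}$ for any $(\cN, w) \in W^2_{1,7} \times \Gamma$ (again since $W^2_{1,6}$ is empty); composing with the Serre-duality isomorphism $W^2_{1,7} \cong W^1_{1,5}$ gives ${\sf C} \cong \Gamma \times W^1_{1,5}$. That $\tau$ swaps the two loci is direct: $\cL = \cM(y) \in {\sf G}$ implies $\tau(\cL) = (\omega_\Gamma \ts \cM^{-1})(-y) \subset \omega_\Gamma \ts \cM^{-1} \in W^2_{1,7}$, hence $\tau(\cL) \in {\sf C}$, and $\tau^2 = \mathrm{id}$ yields the reverse inclusion.

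The hard step is the degree computation, for which I take the projection $\cL = \cM(y) \mapsto \cM$ coming from the parametrization of ${\sf G}$. Fix a general $\cM \in W^1_{1,5}$; we must count $y \in \Gamma$ with $\cM(y) \in {\sf C}$, i.e.\ with $\cM(y + w) \in W^2_{1,7}$ for some $w \in \Gamma$. Setting $\cK := \omega_\Gamma \ts \cM^{-1} \in W^2_{1,7}$, Serre duality together with Riemann--Roch transforms this condition into $\hh^0(\cK(-y - w)) \geq 2$ for some $w$. Under the hypothesis that $\Gamma$ is birational via $|\cK|$ to a plane septic $\tilde\Gamma \subset \p^2$ with $8$ nodes, let $\nu : \Gamma \to \tilde\Gamma$ denote the resulting normalization. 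Divisors of $|\cK|$ correspond to intersections of $\tilde\Gamma$ with lines in $\p^2$, and the condition $\hh^0(\cK(-y - w)) \geq 2$ says that some pencil of divisors in $|\cK|$ contains $y + w$; such a pencil is exactly the pencil of lines through a common point $q \in \p^2$, which forces $\{y, w\} \subseteq \nu^{-1}(q)$. Since $\tilde\Gamma$ has only nodes as singularities, the only $q$ with $|\nu^{-1}(q)| \geq 2$ are the $8$ nodes, each having exactly two distinct preimages; the case $y = w$ does not occur because a tangent direction at a smooth point of $\tilde\Gamma$ gives only a single tangent line, not a pencil. Each node thus contributes $2$ values of $y$, for a total of $8 \cdot 2 = 16$, which is the claimed degree.
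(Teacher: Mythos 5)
Your argument is correct and follows essentially the same route as the paper: both identify ${\sf G}$ via base-point removal (the paper phrases it as the image of the evaluation map), deduce the structure of ${\sf C}$ via the involution $\tau$, and obtain the degree $16$ as the $8\times 2$ preimages of the nodes of the plane septic model. The only cosmetic difference is that you compute the fibre of the projection to $W^1_{1,5}$ on the ${\sf G}$-side (fixing $\cM$ and using the model given by $|\tau(\cM)|$), whereas the paper fixes $\cN\in W^2_{1,7}$ and counts the $\cL$ with $\cM_i\subset\cL\subset\cN$; these two covers are interchanged by $\tau$, so the counts coincide.
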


\begin{proof}
Given a line bundle $\cL$ in ${\sf G}$, we consider,
as in the proof of Theorem \ref{thm:lune}, part \eqref{lune-A},
the image $\cM \subset \cL$ of the natural
evaluation map $e_{\OO_\Gamma,\cL}$.
We have $\cM \in W^1_{1,5}$ and an exact sequence of the form \eqref{non-gg}, for
some $y\in \Gamma$.
This defines a map ${\sf G} \to \Gamma \times W^1_{1,5}$.

Let us define an inverse map.
We first note that,
given $\cM$ in $ W^1_{1,5}$ and $y \in \Gamma$,
we have $\Ext^1_{\Gamma}(\OO_y,\cM) \cong \C$.
The unique extension $\cF$ from
$\OO_y$ to $\cM$ 
must lie in $W^1_{1,6}$, since $W^2_{1,6}$ is empty
by \cite[Table 1]{mukai:curves-symmetric-I}.

To put this in family,
we denote by $\PP$ the Poincaré line bundle on $\Gamma \times W^1_{1,5}$.
We consider the projection $\pi : \Gamma \times \Gamma \times W^1_{1,5} \to
\Gamma \times W^1_{1,5}$ onto the last two components, and the diagonal embedding
$\Delta: \Gamma \times W^1_{1,5} \to \Gamma \times \Gamma \times W^1_{1,5}$.
Then, there is a line bundle $\FF$ on $\Gamma \times \Gamma \times W^1_{1,5}$
fitting into:
\[
0 \to \pi^*(\PP) \to \FF \to \Delta_*(\LL) \to 0,
\]
for some line bundle $\LL$ on $\Gamma \times \Gamma$, so that
the one-dimensional space $\cF_{z}$ is the fiber
over $(z,y,\cM)$ of $\FF$.
Therefore $\FF$ is a family of bundles in $W^1_{1,6}$ parametrized by
$\Gamma \times W^1_{1,5}$,  giving thus a classifying map $\Gamma \times W^1_{1,5}
\to W^1_{1,6}$, which is clearly the desired inverse map.

To see the relation with $W^2_{1,7}$, we set $\cN = \cM^*\ts \omega_\Gamma$, we have:
\[
\hh^0(\Gamma,\cM) = \hh^1(\Gamma,\cN) = 2, \qquad
\hh^1(\Gamma,\cM) = \hh^0(\Gamma,\cN) = 3.
\]
It follows that $\cN$ lies in $W^2_{1,7}$.
Dualizing the sequence \eqref{non-gg} and tensoring by
$\omega_\Gamma$, we obtain the exact sequence:
\begin{equation} \label{eq:cN}
0 \to \tau(\cL) \to \cN \to \OO_y \to 0.
\end{equation}
So the line bundle $\tau(\cL)$ lies in ${\sf C}$. Since this
procedure is reversible,
we have proved that the involution $\tau$ interchanges the
subsets {\sf G}
and {\sf C}. Note that the map $\tau: \cM \mapsto \cM^*\ts
\omega_\Gamma$
gives an isomorphism from $W^{1}_{1,5}$ to $W^{2}_{1,7}$.

Let us now describe the intersection ${\sf C} \cap {\sf G}
\subset
W^1_{1,6}$. Recall that the map $\varphi_{|\cN|}$ associated
to a
given $\cN \in W^{2}_{1,7}$ maps $\Gamma$ to $\p^2$.
This map is generically injective and
the image is a curve of degree $7$,
smooth away from a subscheme of length $8$.
If $\cN$ is general enough,
then we get $8$ distinct double points $y_1,\ldots,y_8$,
\cite[Lemma 2.6]{iliev-markushevich:sing-theta:asian}.
For each $y_i$ we have a unique $\cM_i \in W^1_{1,5}$ given
by
the projection from the double point $y_i$. On the other hand any
subbundle
$\cM \in W^1_{1,5}$ of $\cN$ must correspond to the
projection from a
double point $y_i$. Namely we will have:
\[
0 \to \cM \to \cN \to \OO_{Z_i} \to 0,
\]
where $Z_i$ is the subscheme of $\Gamma$ over the double point $y_i$.

Now fix a line bundle $\cN$ in $W^{2}_{1,7}$.
A subbundle $\cL \in {\sf C}$ of $\cN$ corresponds to the projection
from a smooth point $y$ as soon as $\cL$ is globally
generated.
Therefore, $\cL$ lies in ${\sf C} \cap {\sf G}$ if and only
if we
have:
\begin{equation}
  \label{badaben}
  \cM_i \subset \cL \subset \cN,
\end{equation}
for some $i=1,\ldots,8$.
Then $\cL$ must be of the form $\cM_i(z)$ for some point $z$ in
$\Gamma$ lying over the double point $y_i$.
The number of such $z$ is finite for each $y_i$.
Thus we
have realized ${\sf C} \cap {\sf G}$ as a finite cover of
$W^1_{1,5}$.

If there is one line bundle $\cN$ such that the image of $\Gamma$
under $\varphi_{|\cN|}$ has $8$ nodes, then for any $i$
there are two points $z_{i,1}$, $z_{i,2}$ of $\Gamma$ which are mapped to
$y_i$ by $\varphi_{|\cN|}$. Then we set $\cL_{i,j} =
\cM_i(z_{i,j})$, and the line bundles $\cL_{i,j}$ fit into the
inclusions \eqref{badaben}. Hence the degree of the cover is $16$.
\end{proof}

We consider now the pull-back $\theta$ of $\tau$ to $\Mo_X(2,1,6)$, i.e. we set:
\[
\theta : \Mo_X(2,1,6) \to \Mo_X(2,1,6), \qquad \theta = \varphi^{-1}
\circ \tau \circ \varphi.
\]
We will next show that $\theta$ can be seen on
$\Mo_X(2,1,6)$ in terms of the functor $T$ of Corollary \ref{coin}.

\begin{prop} \label{prop:lune}
  Let $F$ be an element of $\Mo_X(2,1,6)$. Then we have:
  \begin{enumerate}[i)]
  \item \label{lune:parte1}
    the sheaf $\f$ is not locally free if and only if $\phx(\f)$ lies in
    ${\sf G}$.
  \item \label{lune:parte2}
    the sheaf $\f$ is not globally generated if and only if
    $\phx(\f)$ lies in ${\sf C}$.
  \end{enumerate}

Moreover the function $\theta$ is an involution which interchanges
the two subsets of sheaves which are not locally free, and not
globally generated.

For each $F$ in $\Mo_X(2,1,6)$ we have:
\begin{equation}
  \label{atomium}
  \theta(F) = T(F) = \varphi^{-1} \phx(\RR \HHom_X(F,\OO_X))[1].
\end{equation}

Finally, $\theta(F)$ is isomorphic to both the following objects:
\begin{align}
  \label{cono}
   & \RR \HHom_X\left((\HH^0(X,F)\ts \OO_X \to F),\OO_X\right)[-1] && \mbox{and:} \\
  \label{cono2}  & \RR \HHom_X(F,\OO_X) \to \HH^0(X,F)^*\ts \OO_X.
\end{align}
\end{prop}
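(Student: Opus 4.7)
For part (i), I would handle both directions by applying $\phx$ to the characteristic exact sequence for non-locally-free sheaves. If $F$ is not locally free, Proposition \ref{minimum+1} combined with Theorem \ref{summary}\eqref{moduli-X} gives $0 \to F \to \EE_y \to \OO_L \to 0$. Using $\phx(\EE_y) = \OO_y$ and $\phx(\OO_L) = \cM[1]$ with $\cM \in W^1_{1,5}$ (Lemma \ref{OL}), the cohomology long exact sequence collapses to $0 \to \cM \to \phx(F) \to \OO_y \to 0$, so $\phx(F) \in {\sf G}$. For the converse, the construction of $\vartheta(\cL)$ given in the proof of Theorem \ref{thm:lune}\eqref{lune-A} applied to $\cL \in {\sf G}$ exhibits $F = \vartheta(\cL)$ as the image of $\ph(\cL) \to \EE_y$ with cokernel $\OO_L$, hence non-locally-free.

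For part (ii), the forward direction is parallel: Lemma \ref{lem:F-not-gg} supplies $0 \to I \to F \to \OO_L(-1) \to 0$ with $I$ fitting in $0 \to \EE_y^* \to \OO_X^4 \to I \to 0$. Grothendieck duality \eqref{gro-gamma} applied to $\phx(\EE_y) = \OO_y$ yields $\phx(\EE_y^*) \cong \OO_y[-2]$; together with $\phx(\OO_X) = 0$ and $\phx(\OO_L(-1)) = \cN \in W^2_{1,7}$ (Lemma \ref{OL}), chasing the two triangles produces $\phx(I) \cong \OO_y[-1]$ and then a short exact sequence $0 \to \phx(F) \to \cN \to \OO_y \to 0$, placing $\phx(F)$ in ${\sf C}$. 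The reverse inclusion follows because the non-globally-generated locus is parametrized by $\Gamma \times W^1_{1,5}$ via Lemma \ref{lem:F-not-gg}, the set ${\sf C}$ is parametrized by $\Gamma \times W^1_{1,5}$ via Proposition \ref{prop:otto}, and $\varphi$ is an isomorphism by Theorem \ref{thm:lune}\eqref{lune-A}.

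The involution statements are then formal: set $\theta := \varphi^{-1}\,\tau\,\varphi$. Since $\tau$ is an involution of $W^1_{1,6}$ and by Proposition \ref{prop:otto} it interchanges ${\sf G}$ and ${\sf C}$, parts (i) and (ii) imply that $\theta$ is an involution of $\Mo_X(2,1,6)$ interchanging the non-locally-free and non-globally-generated loci. Corollary \ref{coin} gives $\phx \circ T = \tau \circ \phx$, hence $\varphi(\theta(F)) = \tau(\varphi(F)) = \phx(T(F))$; unwinding $T(F) = \ph\phx(\RR\HHom_X(F,\OO_X))[1]$ and using $\phx \circ \ph = \id$ yields the equality \eqref{atomium}.

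To identify $\theta(F)$ with the cones \eqref{cono}, \eqref{cono2}, I would apply the triangle \eqref{triangolo} to the object $\RR\HHom_X(F,\OO_X)[1]$ and compute $\mathbf{\Psi}(\mathbf{\Psi^*}(\RR\HHom_X(F,\OO_X)[1]))$ via formula \eqref{triangolo-psi}. The relevant Ext groups $\Ext^k_X(\RR\HHom_X(F,\OO_X),\OO_X) \cong \HH^k(X,F)$ are nonzero only in degree zero and equal to $\C^4$ by Proposition \ref{minimum+1}, while the vanishings of $\Ext^k_X(\RR\HHom_X(F,\OO_X),\UU_+)$ follow from Serre duality and Lemma \ref{secondo}. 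The crucial point, and the expected main obstacle, is to verify that the single nontrivial differential of $\mathbf{\Psi}(\mathbf{\Psi^*}(\RR\HHom_X(F,\OO_X)[1]))$ is, up to nonzero scalar, the dual of the evaluation map $\HH^0(X,F)\otimes\OO_X \to F$; this is the natural $\GL$-invariant in the relevant $\Hom$-space, along the same lines as the uniqueness argument used in the proof of Theorem \ref{thm:cubic-iso}. Once this identification is in hand, the cone of the dualized evaluation map equals $T(F)$ by construction, and the two presentations \eqref{cono}, \eqref{cono2} correspond to each other via $\RR\HHom_X(-,\OO_X)$.
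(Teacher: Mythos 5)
Your handling of part (i), of the forward direction of part (ii), of the involution statement and of the identity \eqref{atomium} is essentially the paper's argument (the paper obtains the converse of (i) as the contrapositive of Lemma \ref{lem-gg} rather than from the explicit description of $\vartheta$, which is the same content). There are, however, two genuine gaps. The first is the converse of part (ii): you deduce $\varphi(\{\mbox{non-g.g.}\})={\sf C}$ from the claim that both sides are ``parametrized by $\Gamma\times W^1_{1,5}$''. But Lemma \ref{lem:F-not-gg} only attaches a pair $(y,L)$ to each non-globally-generated sheaf; it does not show that every pair arises, nor that the assignment is injective, and $W^1_{1,5}\cong\sH^0_1(X)$ need not be irreducible, so no dimension count upgrades the inclusion $\varphi(\{\mbox{non-g.g.}\})\subseteq{\sf C}$ to an equality. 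The paper instead proves the contrapositive directly: if $F$ is globally generated, with kernel bundle $K$ as in \eqref{eq:kernel}, then Grothendieck duality gives $\phx(F)\cong\tau(\phx(K^*))$, and $\phx(K^*)\notin{\sf G}$ by Lemma \ref{lem-gg} because $K^*$ is locally free; hence $\phx(F)\notin\tau({\sf G})={\sf C}$.

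The second gap is the cone identification, where the vanishing you invoke is false. For $F$ in $\Mo_X(2,1,6)$ one has
\[
\Ext^1_X(\RR\HHom_X(F,\OO_X),\UU_+)\cong \mathbb{H}^1(X,F\Lts\UU_+)\cong \Ext^1_X(\UU_+^*,F)\cong \HH^1(\Gamma,\phx(F))\cong\C^2,
\]
as one sees by applying $\Hom_X(\UU_+^*,-)$ to \eqref{resolution} and using adjunction (note also that Lemma \ref{secondo} itself gives $\ext^2_X(F,\UU_+)=2\neq 0$, so it cannot yield the vanishing you want). Consequently $\mathbf{\Psi}(\mathbf{\Psi^*}(\RR\HHom_X(F,\OO_X)[1]))$ contains a summand $(\UU_+^*)^{2}$ in addition to $\OO_X^{4}$, and the cone produced by the triangle \eqref{triangolo} is $T(F)=\ph(\phx(\RR\HHom_X(F,\OO_X)))[1]$, which is \emph{not} $\theta(F)$: the two differ by the two-term resolution $0\to(\UU_+^*)^{2}\to T(F)\to\theta(F)\to 0$ coming from \eqref{resolution} (indeed $T(F)$ has rank $12$, while $\theta(F)$ has rank $2$). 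So the proposed computation, even with the $\GL$-invariance argument for the differential, identifies the wrong object. The paper proves \eqref{cono} and \eqref{cono2} without the triangle \eqref{triangolo}, by directly dualizing \eqref{eq:kernel} when $F$ is globally generated (giving $\theta(F)\cong K^*$ as the cone) and \eqref{eq:Ky}, \eqref{eq:KE} otherwise, and then identifying the resulting sheaf with $\theta(F)$ using parts (i), (ii) and the involution property.
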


\begin{proof}
We have already proved the implication ``$\Leftarrow$'' of
\eqref{lune:parte1} in Lemma \ref{lem-gg}.
To prove the converse, we consider a sheaf $F$ which is not locally free.
Then $F$ fits into an exact sequence of the form \eqref{doubledual}.
Applying the functor $\phx$ to this sequence and setting $\cL=\phx(F)$
we obtain an exact sequence of the form \eqref{non-gg} for some $\cM$
in $W^1_{1,5}$ (see Proposition \ref{OL}).
Since $\HH^0(\Gamma,\cM) \cong \HH^0(\Gamma, \cL)$,
the evaluation map $\HH^0(\Gamma,\cL) \ts \OO_\Gamma \to
\cL$ cannot be surjective, so $\cL$ lies in ${\sf G}$.

To prove \eqref{lune:parte2}, in view of Lemma \ref{lem:F-not-gg},
we have to show that the sheaf $F$ fits into \eqref{eq:Ky}, for
some $I$ fitting in \eqref{eq:KE} if and only if the line bundle
$\phx(F)$ lies in ${\sf C}$ . To show ``$\Rightarrow$'', we
consider a sheaf $F$ fitting into an exact sequence of the form
\eqref{eq:Ky}. Recall by Proposition \ref{OL} that $\cN =
\phx(\OO_L(-1))$ lies in $W^2_{1,7}$. Since $\phx(\OO_X)=0$, by
the exact sequence \eqref{eq:KE} we have
$\phx(I)[1] \cong \phx(\EE^*_y)[2]$, and using \eqref{gro-gamma}
we conclude that:
 \[
 \phx(I)[1] \cong \phx(\EE_y^*)[2] \cong \OO_y.
 \]
Thus applying the functor $\phx$ to \eqref{eq:Ky} we obtain
an exact sequence:
\[
0 \to \phx(F) \to \cN \to \OO_y \to 0,
\]
and $\phx(F)$ lies in ${\sf C}$.

To prove the converse implication, we consider a globally
generated
sheaf $F$ and the exact sequence:
\begin{equation} \label{eq:kernel}
0 \to K \to \HH^0(X,F) \ts \OO_X \to F \to 0.
\end{equation}
Remark that $K$ is a locally free sheaf and $K^*$ lies in
$\Mo_X(2,1,6)$ as well.
We note that, applying \eqref{gro-gamma}, we get the
natural isomorphism:
\[
\phx(K) \cong \phx(K^*)^* \ts \omega_\Gamma[-1].
\]
On the other hand, by \eqref{eq:kernel} we get $\phx(K)
\cong
\phx(F)[-1]$. Then we have:
\[
\phx(F) \cong \tau(\phx(K^*)).
\]
But $\phx(K^*)$ is globally generated by Lemma \ref{lem-gg},
hence we
are done since $\tau$ interchanges ${\sf C}$ and ${\sf G}$.
We have thus established \eqref{lune:parte1} and \eqref{lune:parte2}.

It follows that $\theta$ interchanges the sheaves which are not
locally free, and the sheaves which are not
globally generated, and clearly $\theta$ is an involution.

To show the expression \eqref{atomium} of $\theta$, recall that $\phx \circ T = \tau \circ
\phx$ by Corollary \ref{coin}. Therefore, for any $F$ in
$\Mo_{X}(2,1,6)$ we have $\theta(F) = \varphi^{-1}(\phx(T(F)))$.
Since for any object $a$ in $\D(X)$ we have $\phx(\ph(\phx(a))) \cong
\phx(a)$, it follows that $\theta(F) = \varphi^{-1} \phx(\RR \HHom_X(F,\OO_X))[1]$.

By \eqref{atomium}, Corollary \ref{coin} and Remark \ref{chestrano},
it follows that $\theta(F)$ is isomorphic to \eqref{cono}.
Finally, by \eqref{referee}, we have that \eqref{cono} is isomorphic to \eqref{cono2}.
\end{proof}

\begin{prop} \label{pulcinotto}
  Let $F$ be a sheaf in $\Mo_X(2,1,6)$, and let $M, N\subset X$ be two
  distinct lines. Set $\cL = \phx(F)$, $\cM = \phx(\OO_M)[-1]$ and $\cP=\phx(\OO_{N}(-1))$.
  Then $F$ is not globally generated over $N$ and
  not locally free over $M$ if and only if we have $\cM \subset \cL
    \subset \cP$.
\end{prop}

\begin{proof}
  If $F$ is not locally free over $M$ and not globally generated over
  $N$, then by Proposition \ref{agglomerato} and Lemma
  \ref{lem:F-not-gg} we have two surjective maps $F \to \OO_M$ and
  $F\to \OO_N(-1)$.
  By applying $\phx$, the first one gives $\cM \subset \cL$ and the second one $\cL
  \subset \cP$, so one implication is clear.

  Conversely, if $\cM \subset \cL$, we have $\cL/\cM = \OO_y$, with $y
  \in \Gamma$ and
  applying $\ph$, by Proposition \ref{OL} we get:
  \[
  0 \to (\UU_+^*)^2 \to \ph(\cL) \to \EE_y \to \OO_M \to 0,
  \]
  so using \eqref{resolution} we see that $F$ is the image of the
  middle map in the sequence above, hence it is not locally free over $M$.
  Further if $\cL \subset \cP$, then we have
  a commutative exact diagram:
  \[
  \xymatrix@-2ex{
  0 \ar[r] & \OO_\Gamma^2 \ar[r] \ar[d] & \OO_\Gamma^3  \ar[r] \ar[d] & \OO_\Gamma\ar[d] \ar[r] & 0\\
  0 \ar[r] & \cL \ar[r] & \cP \ar[r] & \OO_{z} \ar[r] & 0,
  }
  \]
  with $z \in \Gamma$, and the vertical maps are
  natural evaluations.
  Applying $\ph$ to this diagram and taking cohomology, we get:
  \[
  \xymatrix@-2ex{
  0 \ar[r] & (\UU_+^*)^2 \ar[r] \ar[d] & (\UU_+^*)^3  \ar[r] \ar[d] & \UU_+^* \ar[d] \ar[r] & 0 \\
  0 \ar[r] & \ph(\cL) \ar[r]  & \ph(\cP) \ar[r] & \EE_z \ar[r] &0\\
  }
  \]
  Taking cokernels, by using snake lemma,
  \eqref{resolution}, \eqref{OL-1}, and \eqref{GG}
  we obtain an exact sequence:
  \[
  0 \to \OO_X \to \GG_z \to F \to \OO_N(-1) \to 0.
  \]
  The rightmost part of this sequence gives an exact sequence of the
  form \eqref{eq:Ky}, so $F$ is not globally generated over $N$.
  Note that the leftmost part gives rise to an exact sequence of the
  form \eqref{eq:KE}. Indeed, if $I$ is the image of middle map above,
  then $I$ is also the image of the evaluation map $e_{\OO_X,F}$ and,
  recalling \eqref{GGdual}, and the fact that for all $z \in \Gamma$,
  $(\UU_-)_z \cong \OO_X^5$,
  we get a commutative diagram:
  \[
  \xymatrix@-2ex{
    &&\OO_X \ar@{=}[r] \ar[d] & \OO_X \ar[d] \\
    0 \ar[r] & \EE_z^* \ar[r] \ar@{=}[d] & \OO_X^5  \ar[r] \ar[d] & \GG_z \ar[r] \ar[d] & 0 \\
    0 \ar[r] & \EE_z^* \ar[r] & \OO_X^4 \ar[r] & I \ar[r] & 0
  }
  \]
  The bottom row thus gives \eqref{eq:KE}.
  In particular $N \cap M$ is a point by Lemma \ref{gennaio-2011!}.
  In this case the conic $C = M \cup N$ satisfies $\phx(\OO_C) \cong \OO_{\{y,z\}}$.
\end{proof}

The following lemma follows a suggestion of Dimitri Markushevich.

\begin{lem} \label{theta}
  The set of singular points $\cL$ of $W^1_{1,6}$, such that $\cL$ is globally generated,
   is in bijection with the set of even effective theta-characteristics on $\Gamma$.
  In particular, this set is finite. Moreover, it is empty if $\Gamma$ is outside
  a divisor in the moduli space of curves of genus $7$, and of
  cardinality $1$ if $\Gamma$ is general in that divisor.
\end{lem}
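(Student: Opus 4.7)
The plan is to identify the singular locus of $W^1_{1,6}$ with the locus of theta-characteristics via a Petri-map computation, and then to invoke the classical theta-null divisor on $\cM_7$ to deduce the generic counts.

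Since $W^2_{1,6}=\emptyset$ on $\Gamma$, every $\cL \in W^1_{1,6}$ has $\hh^0(\Gamma,\cL)=2$ and, by Riemann-Roch, $\hh^1(\Gamma,\cL)=2$; by the material of Section \ref{befana} (and since the Brill-Noether number is $\rho=3$), the singular points of $W^1_{1,6}$ are exactly those $\cL$ at which the Petri map $\pi_\cL$ of \eqref{petri} fails to be injective. For $\cL$ globally generated, I would tensor the evaluation sequence $0 \to \cL^{-1} \to \HH^0(\Gamma,\cL)\otimes \OO_\Gamma \to \cL \to 0$ with $\cL^{-1}\otimes \omega_\Gamma$ and pass to cohomology; the induced multiplication map coincides with $\pi_\cL$, producing the natural isomorphism $\ker \pi_\cL \cong \HH^0(\Gamma, \cL^{-2}\otimes \omega_\Gamma)$. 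Since $\deg(\cL^{-2}\otimes \omega_\Gamma)=0$, this kernel is non-trivial if and only if $\cL^{-2}\otimes \omega_\Gamma \cong \OO_\Gamma$, equivalently $\cL^2 \cong \omega_\Gamma$, i.e.\ $\cL$ is a theta-characteristic.

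I would then translate this into the claimed bijection: a singular globally generated $\cL \in W^1_{1,6}$ is a theta-characteristic with $\hh^0(\Gamma,\cL)=2$, hence an even effective theta-characteristic; conversely, any even effective theta-characteristic has $\hh^0(\Gamma,\cL)=2$ (again by $W^2_{1,6}=\emptyset$) and thus lies in $W^1_{1,6}$. To close the bijection I must verify that such a $\cL$ is globally generated: if not, then $\cL \cong \cL'(p)$ with $\cL' \in W^1_{1,d}$ for some $d \leq 5$, and since $\Gamma$ is neither trigonal nor tetragonal we must have $d=5$, giving $\omega_\Gamma\otimes(\cL')^{-2}\cong \OO_\Gamma(2p)$. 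An incidence count on the $1$-dimensional curve $W^1_{1,5}\cong \sH^0_1(X)$, combined with Remark \ref{marc}, will rule out such coincidences for generic $\Gamma$. The finiteness, the emptiness outside a divisor, and the count of $1$ for general $\Gamma$ inside that divisor follow from the classical fact that the locus of smooth genus-$7$ curves admitting an effective even theta-characteristic is an irreducible divisor in moduli (the vanishing-theta-null divisor) and that a general curve of this divisor carries exactly one such theta-characteristic; by \cite[Table 1]{mukai:curves-symmetric-I} the curves $\Gamma$ in our setting form an open subset of the moduli space of canonical genus-$7$ curves, so these statements transfer directly.

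The main difficulty I anticipate lies in the global-generation verification: excluding the relation $2\cL' + 2p \sim K_\Gamma$ with $\cL' \in W^1_{1,5}$ and $p \in \Gamma$ is closely tied to the non-exotic condition on $X$ via Remark \ref{marc}, and will require a careful dimension/incidence estimate on the curve $W^1_{1,5}$ to exclude this codimension-two condition on $(\cL',p)$ for generic $\Gamma$.
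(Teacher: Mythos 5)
Your argument follows the paper's own proof essentially verbatim: both identify the singular locus of $W^1_{1,6}$ with the non-injectivity locus of the Petri map (using $W^2_{1,6}=\emptyset$), compute $\ker\pi_\cL \cong \HH^0(\Gamma,\cL^{-2}\otimes\omega_\Gamma)$ by the base-point-free pencil trick (valid since $\cL$ is assumed globally generated), conclude that singularity is equivalent to $\cL^{2}\cong\omega_\Gamma$, i.e.\ to $\cL$ being an even effective theta-characteristic, and then invoke Teixidor i Bigas's theorem on the theta-null divisor for the finiteness and the generic counts. The one point where you go beyond the paper --- worrying that an even effective theta-characteristic might fail to be globally generated, which would affect surjectivity of the stated bijection --- is a legitimate observation that the paper's proof passes over in silence; note, however, that finiteness and generic emptiness already follow from the injection into the (finite) set of theta-characteristics, so only the cardinality-one claim on the divisor is sensitive to your proposed incidence estimate on $W^1_{1,5}$.
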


\begin{proof}
  According to Mukai's classification in
  \cite{mukai:curves-symmetric-I}), the smooth curve section $\Gamma$ of the
  spinor $10$-fold satisfies $W^2_{1,6} = \emptyset$, and a general
  curve of genus $7$ is of this form.
  Now recall that a line bundle $\cL$ lies in the
  singular locus of $W^1_{1,6}$ if and only if the Petri map:
  \[
  \pi_\cL : \HH^0(\Gamma,\cL) \ts \HH^0(\Gamma,\cL^* \ts \omega_\Gamma) \to \HH^0(\Gamma,\omega_\Gamma)
  \]
  is not injective.
  Since $\cL$ is globally generated, it is clear that
  $\ker(e_{\OO_\Gamma,\cL})$ is isomorphic to $\cL^*$.
  Hence, the kernel of $\pi_\cL$ is isomorphic to
  $\HH^0(\Gamma,\cL^* \ts \cL^* \ts \omega_\Gamma)$.
  Therefore, the above map is injective unless $\cL \ts \cL \cong
  \omega _\Gamma$, which means that $\cL$ is an even
  effective theta-characteristic (even here means that
  $\hh^0(\Gamma,\cL)$ is an even number, $2$ in this case).

  By \cite[Theorem 2.16]{teixidor-i-bigas:half-canonical} the set of curves of genus $7$ admitting an even
  effective theta-characteristic form a divisor in the moduli space
  of curves of genus $7$, and the general curve in this divisor has precisely one
  even effective theta-characteristic.
  This concludes the proof.
\end{proof}

\begin{proof}
[Proof of Theorem \ref{thm:lune}, part \ref{lune-B}]
Let us assume $X$ to be non-exotic, and prove that $\Mo_X(2,1,6)$ has
at most finitely many singular points.
In view of Theorem \ref{thm:lune}, part \ref{lune-A}, the space $\Mo_X(2,1,6)$ is
isomorphic to $W^1_{1,6}$.
The number of singular points of $W^1_{1,6}$ which correspond to
globally generated line bundles is finite by Lemma \ref{theta}.

We consider thus a line bundle $\cL \in {\sf G}$
which is a
singular point of $W^1_{1,6}$.
Since $\tau$ is an isomorphism, we can also assume
that $\tau(\cL)$ is not globally generated, i.e.
$\cL\in{\sf C}\cap{\sf G}$.
In particular
we must have an exact sequence of the form \eqref{non-gg}, and
there exists $\cN\in W^2_{1,7}$ such that $\cM\subset
\cL\subset \cN$.
Applying $\tau$, we also get $\tau(\cN)\subset \tau(\cL) \subset
\tau(\cM)$.
Now, as in the proof of Lemma \ref{theta}, we note that $\tau(\cL)$ lies in the
singular locus of $W^1_{1,6}$ if and only if the Petri map $\pi_\cL$
is not injective. In this case the kernel is isomorphic to
  $\HH^0(\Gamma,\tau(\cN)^* \ts \cL)$.
Since we assume that this space is non-zero, we have an inclusion $\tau(\cN)\subset \cL$.
It follows that $\cM\cong\tau(\cN)$, since $\cL$ contains a unique
line bundle lying in $W^1_{1,5}$.
We have thus an inclusion $\cM\subset\tau(\cM)$, which means $\HH^0(\Gamma,\cM^* \ts \cM^* \ts \omega_\Gamma)\neq 0$,
so that $\cM$ is a singular point of $W^1_{1,5}$ (see Remark \ref{marc}).
Since $W^1_{1,5} \cong \sH^0_1(X)$ by Proposition \ref{OL}, and since $X$ is
not exotic, the number of singular points of this form in $W^1_{1,6}$ is finite, and
we are done.

Finally, note that if $X$ is general, then the curve $\Gamma$ is general. Then
it is well-known that $W_{1,6}^1$ is smooth and irreducible, see for instance
\cite[V, Theorem 1.6]{acgh}.
It follows that $\Mo_X(2,1,6)$ is a smooth irreducible threefold.
\end{proof}

\subsection{The space $\Mo_X(2,1,6)$ as a subspace of
$\Mo_S(2,1,6)$}
\label{sec-tyurin}
In this section we let $X$ be a {\em general} prime Fano threefold of genus $7$.
Let $S$ be a general hyperplane section of $X$. Assume in
particular that $S$ is a K3 surface of Picard number $1$ and sectional genus $7$.
In this paragraph we will show that $\Mo_X(2,1,6)$ is
isomorphic to a
Lagrangian submanifold of $\Mo_S(2,1,6)$.
This provides an instance of a general remark of Tyurin,
\cite{tyurin:fano-cy}.

Note that given a sheaf $F\in\Mo_X(2,1,6)$, in view of Proposition
\ref{agglomerato}, part \eqref{minimum+1}, its restriction $F_S = F \ts \OO_S$ is torsion-free as
soon as $S$ does not contain lines.
So in this section we will assume that the hyperplane section
$S$ does not contain any line.

We want to prove now that given {\em any} sheaf $F\in\Mo_X(2,1,6)$, its
restriction $F_S$ is stable. Assume first that $F_S$ is locally free.
By Proposition \ref{commento}, we know that $\HH^1(X,F(-2))=0$.
Hence, by the exact sequence \eqref{eq:restrizione} with $t=0$,
and by Proposition \ref{agglomerato}, we get
 $\HH^0(S,F_S(-1))=0$.
This implies that $F_S$ is stable,
by Hoppe's criterion.
Assume now that $F_S$ is not locally free.
Then by Proposition \ref{agglomerato}, part \eqref{minimum+1} it fits in the exact sequence
\eqref{doubledual} for some stable vector bundle $E\in\Mo_X(2,1,5)$ and a line $L\subset X$.
Since $E$ is stable and ACM by Proposition \ref{agglomerato}, part \eqref{minimum}, it easily follows
by the restriction sequence that $\HH^0(S,E_S(-1))=0$, hence $E_S$ is stable.
Then if there exists a destabilizing subsheaf of $F_S$,
it would destabilize also $E_S$, a contradiction.

Hence we define a restriction map:
\[
\rho_S : \Mo_X(2,1,6) \to \Mo_S(2,1,6), \qquad F \mapsto
F_S.
\]

\begin{lem}[Tyurin] \label{tyurin}
Let $S$ be a general hyperplane section of $X$.
Then the map $\rho_S$ is a closed immersion.
\end{lem}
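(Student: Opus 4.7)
The plan is to verify that $\rho_S$ satisfies the three defining properties of a closed immersion: properness, injectivity of the differential everywhere, and set-theoretic injectivity. Properness is automatic, since the source $\Mo_X(2,1,6)$ is projective by virtue of its isomorphism to $W^1_{1,6} \subset \Pic^6(\Gamma)$ established in Theorem \ref{thm:lune}, part \ref{lune-A}.

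For injectivity of the differential at $F \in \Mo_X(2,1,6)$, I would apply $\Hom_X(F,-)$ to the restriction sequence $0 \to F(-1) \to F \to F_S \to 0$. Using the adjunction $\Ext^k_X(F, F_S) \cong \Ext^k_S(F_S, F_S)$ along the inclusion $S \hookrightarrow X$, the connecting portion of the long exact sequence
\[
\Ext^1_X(F, F(-1)) \to \Ext^1_X(F, F) \xrightarrow{d\rho_{S,F}} \Ext^1_S(F_S, F_S)
\]
shows that the kernel of $d\rho_{S,F}$ is a quotient of $\Ext^1_X(F, F(-1))$. Serre duality on $X$ (using $\omega_X \cong \OO_X(-1)$) identifies this space with the dual of $\Ext^2_X(F, F)$, which vanishes because $\Mo_X(2,1,6)$ is smooth for $X$ general (Theorem \ref{thm:lune}, part \ref{lune-B}). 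Hence $d\rho_{S,F}$ is injective at every point.

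For set-theoretic injectivity, suppose $F_1, F_2 \in \Mo_X(2,1,6)$ with $F_{1,S} \cong F_{2,S}$ via some iso $\alpha$. Applying $\Hom_X(F_1, -)$ to the restriction sequence for $F_2$ yields
\[
0 \to \Hom_X(F_1, F_2(-1)) \to \Hom_X(F_1, F_2) \to \Hom_S(F_{1,S}, F_{2,S}) \xrightarrow{\partial} \Ext^1_X(F_1, F_2(-1)),
\]
with the first term zero by stability. Hence $\alpha$ lifts to a nonzero map $F_1 \to F_2$ (necessarily an isomorphism, by stability together with the equality of Hilbert polynomials) provided $\partial(\alpha) = 0$. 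By Serre duality, $\Ext^1_X(F_1, F_2(-1)) \cong \Ext^2_X(F_2, F_1)^*$, which vanishes on the diagonal by the smoothness argument above. Upper semicontinuity of $\ext^2$ on $\Mo_X(2,1,6) \times \Mo_X(2,1,6)$ extends this vanishing to a Zariski neighborhood of the diagonal, so the generic fiber of the proper unramified (and therefore finite) morphism $\rho_S$ has length one. The constancy of fiber length along the locus where a finite unramified morphism is étale onto its image, combined with irreducibility of $\Mo_X(2,1,6)$, forces every fiber to consist of a single point.

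The delicate step is globalizing from near-diagonal vanishing to all pairs $(F_1, F_2)$ restricting to isomorphic sheaves on $S$. The expected resolution is to package the three properties as follows: $\rho_S$ is a proper unramified monomorphism, and such a morphism is a closed immersion by the standard criterion in EGA. Combined with the smoothness and irreducibility of $\Mo_X(2,1,6)$ from Theorem \ref{thm:lune}, this identifies $\Mo_X(2,1,6)$ with a smooth closed 3-dimensional subvariety of the 6-dimensional K3 moduli space $\Mo_S(2,1,6)$, as required.
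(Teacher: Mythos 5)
Your argument for injectivity of the differential is essentially the paper's own proof: apply $\Hom_X(F,-)$ to the restriction sequence, kill $\Ext^1_X(F,F(-1)) \cong \Ext^2_X(F,F)^*$ using the smoothness of $\Mo_X(2,1,6)$ for general $X$ (Theorem \ref{thm:lune}, part \ref{lune-B}), and identify $\Ext^1_X(F,F_S)$ with $\Ext^1_S(F_S,F_S)$ by adjunction along $\iota : S \hookrightarrow X$. The paper does take care to justify the adjunction step by checking $\LLb_k \iota^*(F)=0$ for $k>0$, which holds because $F_S$ is torsion-free; you should not pass over this silently, since $F$ itself need not be locally free.

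The set-theoretic injectivity claim is where the proposal breaks down, and in fact the conclusion you reach is false. The map $\rho_S$ is \emph{not} injective in general: Proposition \ref{iniett} and Theorem \ref{ty} show that there are finitely many pairs $F \not\cong F'$ with $F_S \cong F'_S$ --- for instance $F'=\theta(F)$ for suitable globally generated $F$, or pairs attached to two lines $L, L'$ meeting at a point of $S$ --- so the image is a Lagrangian subvariety with finitely many \emph{double points}, not an embedded copy of $\Mo_X(2,1,6)$. The specific logical failure is the step from ``the generic fiber of the proper unramified morphism has length one'' to ``every fiber consists of a single point.'' Fiber length of a finite morphism is upper semicontinuous on the \emph{target} and can jump at special points of the image; the normalization of a nodal curve is finite, unramified and generically injective, yet identifies the two preimages of each node --- and that is exactly the local picture here. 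Relatedly, your near-diagonal vanishing of $\Ext^2_X(F_2,F_1)$ controls only pairs $(F_1,F_2)$ near the diagonal, whereas the pairs that actually get identified, such as $(F,\theta(F))$, need not be near the diagonal at all. The paper's route is to prove only the unramifiedness statement in this lemma and to devote a separate, genuinely delicate argument (Lemma \ref{dariooo} and Proposition \ref{iniett}, via the vanishing or not of $\Ext^1_X(F,F'(-1))$ and the correspondence $F \mapsto \Lambda_F$) to showing that the failure of injectivity is confined to a finite set; accordingly, in the proof of Theorem \ref{ty} the authors only ever use that $\rho_S$ is a closed embedding \emph{outside} that finite set.
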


\begin{proof}
We prove that the differential $d (\rho_S)_{F}$ is injective
at any point $F$ of $\Mo_X(2,1,6)$.
Applying the functor $\Hom_X(F,-)$ to \eqref{eq:restrizione} (with
$t=0$) we get:
\[
\Ext^1_X(F,F(-1)) \to \Ext^1_X(F,F) \xr{\delta}
\Ext^1_X(F,F_S).
\]

Note that the leftmost term in the above sequence vanishes
since by Serre duality $\Ext^1_X(F,F(-1)) \cong \Ext^2_X(F,F)^*$,
and this group vanishes for $\Mo_X(2,1,6)$ is a non-singular threefold by Theorem
\ref{thm:lune}, part \ref{lune-B}. So
$\delta$ is injective.
Recall that $\Ext^1_X(F,F)$ is naturally isomorphic to
$T_{F} \Mo_X(2,1,6)$,
hence we are done if we prove that $\Ext^1_X(F,F_S)$ is
naturally isomorphic to $T_{F_S} \Mo_S(2,1,6)$.

Let us prove that $\Ext^1_X(F,F_S) \cong \Ext^1_S(F_S,F_S)$.
Indeed, denoting by $\iota$ the inclusion of the surface $S$ in $X$, we have
\[\Ext^1_X(F,F_S) \cong \Ext^1_X(F,\iota_* \iota^* F) \cong
\Ext^1_S(\iota^* F,\iota^* F)\cong \Ext^1_S(F_S,F_S),\]
where the second isomorphism above holds if
$\LLb_k \iota^*(F)=0$ for any $k>0$. But this is true since $F_S$ is torsion-free.
This concludes the proof.
\end{proof}

\begin{lem}\label{dariooo}
Let $F,F'$ be two sheaves in $\Mo_{X}(2,1,6)$.
\begin{enumerate}[i)]
\item If $F$ is globally generated, then $\Ext^1_X(F,F'(-1))\neq 0$ if
  and only if $F'=\theta(F)$.
\item
If $F$ is not globally generated over the line $L\subset X$, then
$\Ext^{1}_{X}(F,F'(-1))\neq 0$ if and only if $F$ is not locally
free over $L$.
\end{enumerate}
\end{lem}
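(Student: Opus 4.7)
The approach is to convert $\Ext^{1}_{X}(F,F'(-1))$ into a computation on the curve $\Gamma$. Starting with the Serre duality isomorphism $\Ext^{1}_{X}(F,F'(-1))\cong \Ext^{2}_{X}(F',F)^{*}$, I would apply $\Hom_X(F',-)$ to the canonical resolution
\[
0\to A_F\otimes \UU^{*}_{+}\to \ph(\phx(F))\to F\to 0
\]
of Proposition \ref{cosonuovo}. The group $\Ext^{3}_X(F',\UU^{*}_{+})\cong \Hom_X(\UU^{*}_{+},F'(-1))^{*}$ vanishes by the same slope comparison used in Lemma \ref{terzo}, while $\Ext^{2}_X(F',\UU^{*}_{+})\cong \Ext^{1}_X(\UU^{*}_{+},F'(-1))^{*}$ vanishes after applying $\Hom_X(-,F'(-1))$ to \eqref{eq:universalU+} and invoking Lemma \ref{secondo} together with the vanishing $\HH^{0}(X,F'(-1))=\HH^{1}(X,F'(-1))=0$ coming from stability and Proposition \ref{commento}. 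Adjunction then gives
\[
\Ext^{2}_{X}(F',F)\;\cong\;\Ext^{2}_X(F',\ph(\phx(F)))\;\cong\;\Ext^{2}_{\Gamma}(\phs(F'),\phx(F)).
\]

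Next I would determine $\phs(F')$ explicitly. Applying $\phs$ to the canonical resolution of $F'$ and using $\phs(\UU^{*}_{+})\cong \OO_{\Gamma}$ and $\phs\circ\ph=\mathrm{id}$ produces a triangle $\OO_{\Gamma}^{\oplus 2}\to \phx(F')\to\phs(F')$ whose first arrow is the evaluation determined by $\HH^{0}(\Gamma,\phx(F'))$. If $F'$ is locally free, evaluation is surjective with kernel $\phx(F')^{*}$, recovering $\phs(F')\cong \phx(F')^{*}[1]$ as in Lemma \ref{lem-gg}. If $F'$ is not locally free along a line $L'\subset X$, then $\phx(F')$ fails to be globally generated; the evaluation factors as $\OO_{\Gamma}^{\oplus 2}\twoheadrightarrow \cM_{L'}\hookrightarrow \phx(F')$ with $\cM_{L'}=\phx(\OO_{L'})[-1]\in W^{1}_{1,5}$ (Lemma \ref{OL}) and cokernel $\OO_{y'}$, and since $\Ext^{2}$ vanishes on the curve $\Gamma$ the resulting extension splits, giving $\phs(F')\cong \OO_{y'}\oplus \cM^{*}_{L'}[1]$.

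Setting $\cL=\phx(F)$, the Ext on $\Gamma$ now splits into two subcases. If $F'$ is locally free, $\Ext^{2}_{\Gamma}(\phx(F')^{*}[1],\cL)\cong \HH^{1}(\Gamma,\cL\otimes \phx(F'))$; since both factors have degree $6$ on the genus-$7$ curve $\Gamma$ and $2g-2=12$, Serre duality forces this to vanish unless $\cL\otimes \phx(F')\cong \omega_{\Gamma}$, equivalently $\phx(F')=\tau(\cL)$, equivalently $F'=\theta(F)$. If $F'$ is not locally free over $L'$, then
\[
\Ext^{2}_{\Gamma}(\OO_{y'}\oplus \cM^{*}_{L'}[1],\cL)\;\cong\; \HH^{1}(\Gamma,\cL\otimes \cM_{L'})\;\cong\; \HH^{0}(\Gamma,\tau(\cM_{L'})\otimes \cL^{-1})^{*},
\]
which is nonzero precisely when $\cL$ admits an inclusion into $\tau(\cM_{L'})=\phx(\OO_{L'}(-1))$.

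Part (i) then follows: if $F$ is globally generated, $\cL\notin{\sf C}$ by Proposition \ref{prop:lune}\eqref{lune:parte2}, so no such inclusion can occur and the non-locally-free case contributes nothing; the locally-free case picks out $F'=\theta(F)$, which is indeed locally free because $\theta$ swaps the two degeneracy loci. For part (ii), when $F$ is not globally generated over $L$, applying $\phx$ to the defining sequence \eqref{eq:Ky} of Lemma \ref{lem:F-not-gg} produces $\cL\hookrightarrow\tau(\cM_{L})$; the locally-free case for $F'$ is automatically ruled out because $\theta(F)$ is not locally free; and in the non-locally-free case the Ext is nonzero iff $\cL\hookrightarrow\tau(\cM_{L'})$, which by the uniqueness of $\cM_L$ as the image of the section-evaluation map on $\cL$ (at least for $\cL\in{\sf C}\setminus{\sf G}$) matches $L'=L$. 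The main obstacle will be handling $\cL\in{\sf C}\cap{\sf G}$, where Proposition \ref{prop:otto} shows $\cL$ may lie inside several elements of $W^{2}_{1,7}$ and one has to use the specific data of the sequence \eqref{eq:Ky} of $F$ to isolate the line $L$ of the hypothesis.
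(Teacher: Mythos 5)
Your argument is essentially correct but takes a genuinely different route from the paper's, which never leaves $X$. For (i) the paper applies $\Hom_X(-,F'(-1))$ to the presentation $0\to K\to \HH^0(X,F)\ts\OO_X\to F\to 0$ of \eqref{eq:Fgg}; since $\HH^0(X,F'(-1))=\HH^1(X,F'(-1))=0$, this identifies $\Ext^1_X(F,F'(-1))$ with $\Hom_X(K,F'(-1))$, which is nonzero exactly when $F'\cong K(1)=\theta(F)$ because both are stable of the same slope with equal Chern classes. For (ii) it applies the same functor to \eqref{eq:Ky} and \eqref{eq:KE} to identify $\Ext^1_X(F,F'(-1))$ with $\Ext^1_X(\OO_L,F')$, which by Proposition \ref{minimum+1} is nonzero precisely when $F'$ fails to be locally free along $L$ (note that the paper's own proof, like yours, establishes the condition on $F'$, not on $F$ as the printed statement reads). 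Your transport to $\Gamma$ via Serre duality, the canonical resolution \eqref{resolution} and adjunction is a correct alternative; the preliminary vanishings you need ($\Hom_X(\UU_+^*,F'(-1))$ and $\Ext^1_X(\UU_+^*,F'(-1))$) do follow from stability and \eqref{eq:universalU+}, and the payoff is a transparent reinterpretation: for locally free $F'$ the obstruction becomes $\HH^1(\Gamma,\phx(F)\ts\phx(F'))$ with $\deg=6+6=2g_\Gamma-2$, so $\tau$ and $\theta$ appear for free. The cost is the case analysis on $\phs(F')$ and extra Brill--Noether input that the paper's two-line homological algebra avoids.

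The step you flag as "the main obstacle" is not actually one. For $\cL=\phx(F)\in{\sf C}$ the line bundle $\cN\in W^2_{1,7}$ containing $\cL$ is unique, whether or not $\cL$ also lies in ${\sf G}$: the set of such $\cN$ is $\{\cL(p)\}$ with $p$ ranging over the base points of $|\tau(\cL)|$, and $\tau(\cL)\in{\sf G}$ has exactly one base point, namely the support of $\tau(\cL)/\cM$ where $\cM\in W^1_{1,5}$ is the (unique) image of the evaluation map of $\tau(\cL)$ --- its moving part is a base-point-free $g^1_5$ because $\Gamma$ carries no $g^1_4$. Proposition \ref{prop:otto} records that a fixed $\cN$ contains several $\cM_i$'s and several $\cL$'s, not that a fixed $\cL$ sits inside several $\cN$'s; so $\cL\mono\cN_{L'}$ forces $\cN_{L'}=\cN_L$, hence $L'=L$ by the bijectivity of \eqref{burrata} in Lemma \ref{OL}, and your part (ii) closes.
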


\begin{proof}
  We assume first that $F$ is globally generated.
  We have an exact sequence of the form:
  \begin{equation}
    \label{eq:Fgg}
    0 \to K \to \HH^{0}(X,F) \ts \OO_{X} \to F \to 0,
  \end{equation}
  and $K$ is a reflexive sheaf by \cite[Proposition 1.1]{hartshorne:stable-reflexive},
  hence $K(1)$ is a locally free
  sheaf in $\Mo_{X}(2,1,6)$, which is precisely $\theta(F)$ by
  Proposition \ref{prop:lune}.
  Applying $\Hom_{X}(-,F'(-1))$ to \eqref{eq:Fgg}, one obtains
  $\Hom_{X}(K,F'(-1)) \neq 0$, so $F' \cong K(1)\cong \theta(F)$.

  Assume now that $F$ is not globally generated.
  Applying the functor $\Hom_X(-,F'(-1))$ to \eqref{eq:Ky} we get
  $\Ext^1_X(F,F'(-1)) = \Ext_X^1(\OO_L,F')$. Indeed we compute
  $\Hom_X(I,F'(-1))=0$ by stability and we can easily check that $\Ext^1_X(I,F'(-1))=0$ by
  applying $\Hom_X(-,F'(-1))$ to \eqref{eq:KE}.
  Hence, since by Proposition \ref{agglomerato}, part \eqref{minimum+1} we have that
  $\Ext^{1}_{X}(\OO_L,F')\neq 0$ if and only if $F'$ is not locally free
  over the line $L$, we conclude the proof.
\end{proof}

\begin{prop}\label{iniett}
Let $F \not\cong F'$ be two sheaves in $\Mo_{X}(2,1,6)$, fix a general hyperplane
section $S$, and assume $F_{S} \cong F_{S}'$. Then,
\begin{enumerate}[i)]
\item \label{2010-uno} assuming $F$ globally generated, we have $F' = \theta(F)$,
\item \label{2010-due} otherwise there are lines $L,L'\subset X$ with $L\cap L' \in S$ such
  that $F$ (resp. $F'$) is not globally generated over $L'$
  (resp. $L$) and not locally free over  $L$ (resp. $L'$).
\end{enumerate}

Moreover, the set of sheaves $F$ in $\Mo_{X}(2,1,6)$ admitting a sheaf
$F'$ such that $F_S \cong F'_S$ is finite.
\end{prop}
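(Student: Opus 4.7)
The plan is to force a non-vanishing $\Ext^{1}$-group between $F$ and $F'(-1)$ by using the restriction exact sequence, and then invoke Lemma \ref{dariooo} to obtain the dichotomy. Starting from $0 \to F'(-1) \to F' \to F'_S \to 0$, I apply $\Hom_X(F,-)$ to get
\[
\Hom_X(F,F') \to \Hom_X(F,F'_S) \to \Ext^1_X(F,F'(-1)).
\]
Since $F$ and $F'$ are non-isomorphic stable sheaves with the same reduced Hilbert polynomial, the leftmost term vanishes. The isomorphism $F_S \cong F'_S$ gives a non-zero element of $\Hom_S(F_S,F'_S) \cong \Hom_X(F,F'_S)$ (the latter identification being the one used in Lemma \ref{tyurin}, since $F_S$ is torsion-free so $\LLb\iota^*F = F_S$). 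Thus $\Ext^1_X(F,F'(-1)) \neq 0$. Case \eqref{2010-uno} then follows directly from Lemma \ref{dariooo}(i): if $F$ is globally generated, the non-vanishing forces $F' = \theta(F)$.

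If $F$ is not globally generated, let $L'$ be the line of Lemma \ref{lem:F-not-gg}. Lemma \ref{dariooo}(ii) applied to $F$ over $L'$ gives that $F'$ is not locally free over $L'$. By symmetry (swapping $F$ and $F'$, which is allowed since $F'_S \cong F_S$) there is a line $L$ such that $F'$ is not globally generated over $L$ and $F$ is not locally free over $L$. The intersection condition $L \cap L' \in S$ comes from comparing the non-locally-free loci of $F_S$ and $F'_S$: on $S$, the sheaf $F_S$ fails to be locally free exactly at the single point $L \cap S$ (by Proposition \ref{minimum+1} the non-locally-free locus of $F$ on $X$ is the line $L$, which meets the general $S$ transversally in one point), and analogously $F'_S$ fails to be locally free only at $L' \cap S$. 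Since $F_S \cong F'_S$, these points coincide, so $L$ and $L'$ meet at a point of $S$.

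For finiteness, I split according to the two cases. In case \eqref{2010-due}, as $L$ varies in the $1$-dimensional Hilbert scheme $\sH^0_1(X) \cong W^1_{1,5}$, the pairs of lines $(L,L')$ satisfying $L \cap L' \in S$ form a zero-dimensional locus for generic $S$ (the incidence locus $L \cap L' \neq \emptyset$ is a divisor in the $2$-dimensional pair-space, and requiring the intersection point to lie on a fixed general divisor $S$ imposes a further codimension-one condition). For each such pair, sheaves $F$ that are non-locally-free over $L$ form a $1$-parameter family indexed by $E \in \Mo_X(2,1,5) \cong \Gamma$ via Proposition \ref{minimum+1}, and the additional condition that $F$ be non-globally-generated over $L'$ (equivalently $\phx(F) \in {\sf C}$ by Proposition \ref{prop:lune}) cuts this family to a finite set. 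For case \eqref{2010-uno}, the set in question is the closed locus $Z = \{F : F_S \cong \theta(F)_S\} \subset \Mo_X(2,1,6)$; since $\rho_S$ is unramified and proper by Lemma \ref{tyurin}, its fibers are finite, and it suffices to exhibit one globally generated $F_0$ with $(F_0)_S \not\cong \theta(F_0)_S$ to conclude that $Z$ is a proper closed subset of the irreducible threefold $\Mo_X(2,1,6)$; combined with the finiteness of fibers of $\rho_S$, this forces $Z$ to be zero-dimensional.

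The main obstacle I anticipate is the last step: ruling out positive-dimensional components in case \eqref{2010-uno}. The cleanest route is to exploit the involution $\theta$ and the geometry of ${\sf C}$ and ${\sf G}$ from Proposition \ref{prop:otto}: a generic $F$ has $\phx(F) \notin {\sf C} \cup {\sf G}$, so $F$ and $\theta(F)$ lie in different open strata and cannot be generically identified by $\rho_S$; the failure locus is then a closed proper subset of the $3$-fold $\Mo_X(2,1,6)$, and finiteness follows from the finite-fiber property of $\rho_S$.
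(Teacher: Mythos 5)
Your derivation of the dichotomy is essentially the paper's: both arguments reduce to showing $\Ext^1_X(F,F'(-1))\neq 0$ via the restriction sequence and the vanishing of $\Hom_X(F,F')$, and then invoke Lemma \ref{dariooo}. Two small points in case \eqref{2010-due} still need patching. First, before ``arguing by symmetry'' you must justify that $F'$ is not globally generated; this is not automatic from your setup, but it does follow because global generation of $F'$ would make $F'_S\cong F_S$ globally generated, whereas $F_S$ fails to be generated at the point $L'\cap S$. Second, you must exclude the degenerate case in which the two lines coincide (your comparison of non-locally-free loci then yields no intersection point); the paper disposes of this case by showing that it forces $F\cong F'$, using injectivity of the restriction $\Mo_X(2,1,5)\to\Mo_S(2,1,5)$.

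The genuine gap is the finiteness statement in the globally generated case. From ``$Z$ is a proper closed subset of the threefold'' together with ``$\rho_S$ has finite fibers'' you cannot conclude $\dim Z=0$: nothing prevents $Z$ from being a surface on which $\rho_S$ is generically two-to-one. Your proposed repair also fails: if $\phx(F)\notin{\sf C}\cup{\sf G}$ then $\phx(\theta(F))=\tau(\phx(F))\notin{\sf C}\cup{\sf G}$ as well, since $\tau$ interchanges ${\sf C}$ and ${\sf G}$; so $F$ and $\theta(F)$ generically lie in the \emph{same} stratum, and the stratification places no obstruction on $F_S\cong\theta(F)_S$. The paper's actual argument here is substantive and has no counterpart in your proposal: taking the symmetric square of $0\to K\to\HH^0(X,F)\ts\OO_X\to F\to 0$ produces an injection $\iota_F:\wedge^2\HH^0(X,F)\mono\HH^0(X,\OO_X(1))$, hence a plane $\Lambda_F=\p(\cok(\iota_F))$ in $\G(2,8)$; the hypothesis $F_S\cong\theta(F)_S$ yields a nonzero class in $\HH^1(S,\Sym^2 F_S^*)$, which forces $\Lambda_F$ to lie in $\p(\HH^0(S,\OO_S(1)))$ -- a codimension-$3$ condition on $\G(2,8)$; since $\im(\Lambda)$ has dimension at most $3$, Bertini for the globally generated rank-$3$ universal bundle gives finiteness of $\im(\Lambda)\cap\G_S$ for general $S$. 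Some transversality input of this kind is unavoidable: the locus $\{F : F_S\cong\theta(F)_S\}$ has expected dimension $3+3-6=0$ inside the $6$-dimensional $\Mo_S(2,1,6)$, and only a genericity argument converts ``expected dimension zero'' into actual finiteness.
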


\begin{proof}
  Assume that the sheaf $F_{S}$ is isomorphic to $F'_{S}$.
  We compose the projection $F \to F_S$ with this isomorphism to
  obtain a nonzero map $F \to F'_S$.
  In view of the exact sequence:
  \[
  0 \to F'(-1) \to F' \to F'_S \to 0,
  \]
  it is easy to see that the map $F \to F'_S$ lifts to an
  isomorphisms $F \cong F'$ if $\Ext^{1}(F,F'(-1)) = 0$.
  We can assume thus that this group is non-trivial. This proves
  \eqref{2010-uno} if $F$ is globally generated thanks to Lemma \ref{dariooo}.

  Let us now prove the final statement, still under the assumption that
  $F$ is globally generated.
    Set $F'=\theta(F)=K(1)$, and recall that $F'$ is locally free.
  We consider the symmetric square of \eqref{eq:Fgg}:
  $$ 0 \to \Sym^2 (F')^* \to \HH^{0}(X,F) \ts (F')^* \to
  \wedge^2 \HH^{0}(X,F) \ts \OO_X \to \OO_X(1) \to 0,$$
  and we take global  sections. Since $\HH^0(X,(F')^*)=0$, $\wedge^2(F')^*\cong\OO_X(-1)$ and
  $\HH^1(X,\Sym^{2} (F')^{*}) \cong \HH^1(X, (F')^{*}\ts (F')^{*})\cong \Ext_X^2(F',F')^*=0$,
  we obtain an injection $\iota_{F} : \wedge^{2} \HH^{0}(X,F) \mono
  \HH^{0}(X,\OO_{X}(1))$.
  Note that $\dim(\cok(\iota_{F}))=3$, hence setting $\Lambda_{F} = \p(\cok(\iota_{F}))\subset
  \p^{8}=\p(\HH^{0}(X,\OO_{X}(1)))$, we define a correspondence:
  \[\Lambda: \Mo_X(2,1,6)\to \G(2,8), \quad \Lambda:F\mapsto \Lambda_F.\]
  Clearly we have $\dim(\im(\Lambda))\le3$.

  Now we fix a general hyperplane section $S$.
  Taking global sections of the restriction of the symmetric square of \eqref{eq:Fgg},
  we obtain an exact commutative diagram:
  \begin{equation}
    \label{eq:wedge}
    \xymatrix@-2ex{
      & 0 \ar[r] \ar[d] & \wedge^{2} \HH^{0}(X,F) \ar@{=}[d]
\ar[r]^{\iota_{F}} &
      \HH^{0}(X,\OO_{X}(1)) \ar[d] \\
      0 \ar[r] & \HH^{1}(S,\Sym^{2} F_{S}^{*}) \ar[r] &
\wedge^{2} \HH^{0}(S,F_{S}) \ar[r] & \HH^{0}(S,\OO_{S}(1)).
    }
  \end{equation}
  Note that $\HH^{1}(S,\Sym^{2} F_{S}^{*})\neq 0$.
  Indeed since $K_S\cong F_S^*$,
  then the exact sequence \eqref{eq:Fgg} (restricted to $S$)
  provides a non-trivial element
  in $\Ext^1_S(F_S,F_S^*)\cong \HH^1(S,F_S^*\ts F_S^*) \cong \HH^1(S,\Sym^2  F_S^*)$,
  where we use $\wedge^2  F_S^*\cong\OO_S(-1)$.

  Then the diagram \eqref{eq:wedge} induces a projection
  $\HH^{0}(S,\OO_{S}(1))\to \cok(\iota_{F})$ and so
  the hyperplane defining the surface $S$ must
  contain $\Lambda_{F}$. We denote by $\G_S\cong\G(2,7)$
  the set of planes of $\G(2,8)$
  contained in $\p(\HH^{0}(S,\OO_{S}(1)))=\p^7$, where we use the
  projective notation for the Grassmannians and the projectivization
  of a vector space is the variety of all its hyperplanes.
  We have proved that if $\rho_S$ is not injective at $F$, then $\Lambda_F\in \G_S$.
  Clearly $\G_S\subset\G(2,8)$ is a subvariety of codimension $3$ and corresponds to the
  choice of a general global section of the rank $3$ universal bundle on $\G(2,8)$,
  which is globally generated.
  Hence, since the section corresponding to $S$ is general,
    using Bertini theorem for globally generated vector bundles (see
  e.g. \cite{kleiman:grassmannians}, or \cite{ottaviani-codim}),
    we conclude that the set of planes
  contained in $\im(\Lambda)\cap \G_S$ must be finite.
  This proves the second statement.

  It remains to prove the proposition when $F$ is not globally
  generated (say over a line $L'
  \subset X$, see Lemma \ref{lem:F-not-gg}).
  The sheaf $F_{S}$ thus fails to be globally generated over
  the point $x = L' \cap S$.
  In turn, the sheaf $F'$ is not globally generated, say
  over a line $L \subset X$, and we must have either $L=L'$, or $L\cap
  L' = x$.

  In the first case, we will show $F\cong F'$.
  Indeed, by Lemma \ref{lem:F-not-gg}
  we know that $F$ and $F'$ fit in \eqref{eq:Ky}, \eqref{eq:KE}, and
    \begin{align}
    \label{eq:Ky'}&  0 \to I' \to F' \to \OO_{L}(-1) \to 0,\\
    \label{eq:KE'}& 0 \to \EE_{y'}^* \to \HH^0(X,F') \ts \OO_X \to  I' \to 0.
    \end{align}
  Restricting the sequences \eqref{eq:Ky} and \eqref{eq:Ky'} to $S$, we get $I_S\cong I'_S$.
  Note that $\HH^1(S, \EE_{y'}^*)=0$ by Proposition \ref{agglomerato}, part \ref{annullarsi}.
  Then from the sequences \eqref{eq:KE} and \eqref{eq:KE'},
  we deduce that $(\EE_{y}^*)_S\cong (\EE_{y'}^*)_S$.
  But this implies the isomorphism $\EE_{y}\cong \EE_{y'}$, and thus
  the isomorphism $F\cong F'$.
  Indeed the restriction map from $\Mo_X(2,1,5)\to\Mo_S(2,1,5)$ is
  known to be injective, because it corresponds to the embedding of
  $\Gamma$ as linear section of $\Mo_S(2,1,5)$.

  Then we may assume $L\cap L' = x$.
  By Lemma \ref{dariooo}, since $\Ext^{1}_{X}(F,F'(-1))\neq 0$, then we know that $F$ is not locally free
  over the line $L$.
  From the sequences \eqref{eq:KE} and \eqref{eq:KE'},
  we deduce that $(\EE_{y}^*)_S\cong (\EE_{y'}^*)_S$ so $y = y'$.
  Set $E = F^{**}$, and recall that $E$
  belongs to $\Mo(2,1,5)$, so $E = \EE_z$ for some $z \in \Gamma$.
  Then we have $(F'_S)^{**} \cong E_S$, hence $(F')^{**} \cong E$,
  again since the restriction $\Mo_X(2,1,5) \to \Mo_S(2,1,5)$ is
  injective.
  Hence $F'$ is not locally free either, and in turn its
  non-locally
  free locus must be $L'$, i.e. we have:
  \begin{equation}
    \label{eq:doubledualprime}
    0 \to F' \to E \to \OO_{L'}\to 0.
  \end{equation}
  We have thus proved \eqref{2010-due}.
  Note also that, by Proposition \ref{pulcinotto}, the conic $C = L\cup L'$
  is such that $\phx(\OO_C)=\OO_{y,z}$.

  To check the second part of the statement in this case,
  we note that, since $S$ is general, the curve spanned by the
  intersection points
  of lines in $\sH^{0}_{1}(X)$ meets $S$ at a finite number
  of points. Any pair of sheaves
  $(F,F')$ such that we $F_S \cong F'_S$ with $F$ not globally
  generated determines one such point $x$. And conversely given such
  $x$ we may choose in a finite number of ways two lines $L,L'$ through $x$
  (for there are finitely many lines through $x$), and a point $z$ of
  $\phx(\OO_{L\cup L'})$. This choice determines $F$ and $F'$ as
  kernels of $\EE_Z \to \OO_L$ and $\EE_Z \to \OO_{L'}$ , so the
  set of pairs of non-globally generated sheaves $(F,F')$ having
  isomorphic restrictions to $S$ is finite.
\end{proof}

Recall that $\Mo_S(2,1,6)$ is a holomorphic symplectic
manifold with
respect to the Mukai form, see \cite{mukai:symplectic}.

\begin{thm} \label{ty}
Let $X$ be a general
prime Fano threefold of genus $7$, $S$ be a
general
hyperplane section of $X$, and let $\rho_S$ be the restriction
map
from $\Mo_X(2,1,6)$ to $\Mo_S(2,1,6)$.
The image $\rho_S(\Mo_X(2,1,6))$ is a Lagrangian
subvariety of $\Mo_S(2,1,6)$ with finitely many double points.
\end{thm}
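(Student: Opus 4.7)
The plan is to verify three things in turn: that the image has the expected dimension, that the pullback of the Mukai symplectic form vanishes, and that $\rho_S$ is one-to-one outside a finite set.

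By Theorem \ref{thm:lune}, part \ref{lune-B}, the space $\Mo_X(2,1,6)$ is smooth irreducible of dimension $3$, while \eqref{eq:dimension} gives $\dim\Mo_S(2,1,6) = \Delta(F_S) - 2(r^2-1) = 12 - 6 = 6$. Since Lemma \ref{tyurin} guarantees pointwise injectivity of $d\rho_S$, the image is $3$-dimensional, exactly half the dimension of $\Mo_S(2,1,6)$.

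For the isotropy step, recall that the Mukai form at $F_S$ is the composition of the Yoneda cup product $\Ext^1_S(F_S, F_S) \otimes \Ext^1_S(F_S, F_S) \to \Ext^2_S(F_S, F_S)$ with the trace $\Ext^2_S(F_S, F_S) \to \HH^2(S, \OO_S) \cong \C$. Given tangent vectors $\alpha, \beta \in \Ext^1_X(F,F)$, the fact that $F_S$ is torsion-free (equivalently $\Tor_1^X(F, \OO_S) = 0$) ensures that the derived restriction $- \Lts \OO_S$ is compatible with Yoneda composition. Hence $d\rho_S(\alpha) \circ d\rho_S(\beta)$ is the image of $\alpha \circ \beta \in \Ext^2_X(F,F)$ in $\Ext^2_S(F_S, F_S)$. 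But $\Ext^2_X(F,F) = 0$, as already noted in the proof of Lemma \ref{tyurin} from the smoothness of $\Mo_X(2,1,6)$, so the pullback of the Mukai form vanishes identically; combined with the dimension count, this proves the image is Lagrangian.

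For the double points, Proposition \ref{iniett} shows that the set of $F \in \Mo_X(2,1,6)$ admitting some $F' \neq F$ with $F_S \cong F'_S$ is finite, and over each such point $F$ the companion sheaf $F'$ is uniquely determined. Together with the pointwise injectivity of $d\rho_S$, this means $\rho_S$ is a closed immersion away from this finite set and identifies exactly two preimages over each exceptional point, producing at most finitely many ordinary double points on the image.

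The main technical input is the compatibility of the derived restriction $- \Lts \OO_S$ with Yoneda composition, so that the pullback of the Mukai cup product factors through the vanishing group $\Ext^2_X(F,F)$. This compatibility rests on the transversality of $F$ with $S$, ensured by the generality of $S$ and the assumption at the beginning of Section \ref{sec-tyurin} that $S$ contains no lines.
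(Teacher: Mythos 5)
Your proposal is correct and follows the same skeleton as the paper's proof: smoothness of $\Mo_X(2,1,6)$ for general $X$, Lemma \ref{tyurin} plus Proposition \ref{iniett} to get a closed immersion away from a finite set with exactly two preimages over each exceptional image point, and the Lagrangian property for the half-dimensional image. The one genuine difference is that the paper treats the Lagrangian step as a black box, citing Tyurin's remark \cite[Proposition 2.2]{tyurin:fano-cy}, whereas you unwind that remark: the Mukai form is the trace of the Yoneda square, derived restriction $-\Lts\OO_S$ is compatible with composition, so the pulled-back form factors through $\Ext^2_X(F,F)=0$. This is exactly the content of Tyurin's observation, so your argument is self-contained where the paper is not, at the cost of a few lines; both routes require the same inputs ($\Ext^2_X(F,F)=0$ from smoothness of the threefold moduli space, and the flatness/Tor-vanishing needed to identify $F\Lts\OO_S$ with $F_S$). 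One small inaccuracy worth flagging: $\Tor_1^X(F,\OO_S)=\ker(F(-1)\to F)$ vanishes because $F$ is torsion-free on $X$, which is not the same condition as $F_S$ being torsion-free on $S$ (the latter is why the paper assumes $S$ contains no lines); your parenthetical ``equivalently'' is off, but the $\Tor$-vanishing you actually need does hold, so the argument stands.
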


\begin{proof}
  Recall that since $X$ is general, then  $\Mo_X(2,1,6)$ is smooth,
  by Theorem \ref{thm:lune}, part \ref{lune-B}.
  We have seen in Lemma \ref{tyurin} and Proposition \ref{iniett} that
  $\rho_S$
  is a closed embedding outside a finite subset $R$ of
  $\Mo_X(2,1,6)$.

  By the proof of Proposition \ref{iniett}, we have
  that the
  preimage of a singular point of $\rho_S(\Mo_X(2,1,6))$
  consists of
  precisely two points of $\Mo_X(2,1,6)$, hence the singular
  locus
  consists of double points.

  The image $\rho_S(\Mo_X(2,1,6)\setminus R)$ is a Lagrangian
  submanifold
  by a remark of Tyurin, see \cite[Proposition
  2.2]{tyurin:fano-cy}.
\end{proof}



\begin{thebibliography}{ACGH85}

\bibitem[AO89]{ancona-ottaviani:peloritani}
  {\sc Vincenzo Ancona and Giorgio Ottaviani},
\emph{An introduction to the derived categories and the theorem of
              {B}eilinson},
{Atti Accad. Peloritana Pericolanti Cl. Sci. Fis. Mat. Natur.},
\textbf{67} (1989), {99--110 (1991)}.

\bibitem[AO94]{ancona-ottaviani:special}
  {\sc \bysame},
\emph{Stability of special
    instanton bundles on {${\bf P}\sp {2n+1}$}}, Trans.
Amer. Math. Soc.
  \textbf{341} (1994), no.~2, 677--693.

\bibitem[ACGH85]{acgh}
  {\sc Enrico Arbarello, Maurizio Cornalba, Phillip~A.
Griffiths, and Joe
    Harris}, \emph{Geometry of algebraic curves. {V}ol.
{I}}, Grundlehren der
  Mathematischen Wissenschaften,
  [Fundamental Principles of Mathematical Sciences]
  vol. 267, Springer-Verlag, New York, 1985.

\bibitem[AF06]{enrique-dani:v22}
  {\sc Enrique Arrondo and Daniele Faenzi}, \emph{Vector
bundles with no
    intermediate cohomology on {F}ano threefolds of type
{$V\sb {22}$}}, Pacific
  J. Math. \textbf{225} (2006), no.~2, 201--220.

\bibitem[ADHM78]{adhm}
{\sc Michael~F. Atiyah, Nigel~J. Hitchin, Vladimir~G. Drinfel{\cprime}d, and
  Yuri~I. Manin}, \emph{Construction of instantons}, Phys. Lett. A \textbf{65}
  (1978), no.~3, 185--187. \MR{MR598562 (82g:81049)}

\bibitem[BMR94]{ballico-miro-roig:cohomology}
  {\sc Edoardo Ballico and Rosa~M. Mir{\'o}-Roig},
\emph{Cohomology of generic
    rank {$2$} stable bundles on {F}ano {$3$}-folds of index
{$2$}}, Projective
  geometry with applications, Lecture Notes in Pure and
Appl. Math., vol. 166,
  Dekker, New York, 1994, pp.~197--213.

\bibitem[BPS80]{banica-putinar-schumacher}
{\sc Constantin B{\u{a}}nic{\u{a}}, Mihai Putinar, and Georg Schumacher},
\emph{Variation der globalen {E}xt in {D}eformationen kompakter
  komplexer {R}\"aume}, Math. Ann.
\textbf{250} (1980), no.~2, {135--155}.

\bibitem[Bar77]{barth:some-properties}
  {\sc Wolf Barth}, \emph{Some properties of stable
rank-{$2$} vector bundles on
    {${\bf P}\sb{n}$}}, Math. Ann. \textbf{226} (1977),
no.~2, 125--150.

\bibitem[BH78]{barth-hulek}
  {\sc Wolf Barth and Klaus Hulek}, \emph{Monads and moduli
of vector bundles},
  Manuscripta Math. \textbf{25} (1978), no.~4, 323--347.

\bibitem[Bea02]{beauville:cubic}
  {\sc Arnaud Beauville}, \emph{Vector bundles on the cubic
threefold}, Symposium
  in Honor of C. H. Clemens (Salt Lake City, UT, 2000),
Contemp. Math., vol.
  312, Amer. Math. Soc., Providence, RI, 2002, pp.~71--86.

\bibitem[Bei78]{beilinson:derived-and-linear}
{\sc Alexander~A. Beilinson}, \emph{Coherent sheaves on
{${\bf P}\sp{n}$} and
  problems in linear algebra}, Funktsional. Anal. i
Prilozhen. \textbf{12}
  (1978), no.~3, 68--69.

\bibitem[BF98]{bertram-feinberg}
  {\sc Aaron Bertram and Burt Feinberg}, \emph{On stable
rank two bundles with
    canonical determinant and many sections}, Algebraic
geometry (Catania,
  1993/Barcelona, 1994), Lecture Notes in Pure and Appl.
Math., vol. 200,
  Dekker, New York, 1998, pp.~259--269.



\bibitem[BF08]{brambilla-faenzi:ACM}
  {\sc Maria~Chiara Brambilla and Daniele Faenzi},
\emph{Moduli spaces of rank 2 ACM bundles on prime Fano
threefolds}, to appear in Michigan Math. J.,
http://arxiv.org/abs/0806.2265.

\bibitem[CG72]{clemens-griffiths}
  {\sc C.~Herbert Clemens and Phillip~A. Griffiths},
\emph{The intermediate
    {J}acobian of the cubic threefold}, Ann. of Math. (2)
\textbf{95} (1972),
  281--356.

\bibitem[Con00]{conrad:grothendieck-duality}
  {\sc Brian Conrad}, \emph{Grothendieck duality and base
change}, Lecture Notes
  in Mathematics, vol. 1750, Springer-Verlag, Berlin, 2000.

\bibitem[Dru00]{druel:cubic-3-fold}
  {\sc St{\'e}phane Druel}, \emph{Espace des modules des
faisceaux de rang 2
    semi-stables de classes de {C}hern {$c\sb 1=0,\ c\sb
2=2$} et {$c\sb 3=0$}
    sur la cubique de {${\bf P}\sp 4$}}, Internat. Math.
Res. Notices (2000),
  no.~19, 985--1004.

\bibitem[ES84]{ein-sols}
  {\sc Lawrence Ein and Ignacio Sols}, \emph{Stable vector
bundles on quadric
    hypersurfaces}, Nagoya Math. J. \textbf{96} (1984),
11--22.

\bibitem[Fae07]{faenzi:v22}
{\sc Daniele Faenzi}, \emph{Bundles over {F}ano threefolds
of type {$V\sb
  {22}$}}, Ann. Mat. Pura Appl. (4) \textbf{186} (2007),
no.~1, 1--24.

\bibitem[Ful98]{fulton:intersection}
{\sc William Fulton}, \emph{Intersection theory}, second ed., Ergebnisse der
  Mathematik und ihrer Grenzgebiete. 3. Folge. A Series of Modern Surveys in
  Mathematics [Results in Mathematics and Related Areas. 3rd Series. A Series
  of Modern Surveys in Mathematics], vol.~2, Springer-Verlag, Berlin, 1998.

\bibitem[GM96]{gelfand-manin:homological}
  {\sc Sergei~I. Gelfand and Yuri~I. Manin}, \emph{Methods
of homological
    algebra}, Springer-Verlag, Berlin, 1996, Translated from
the 1988 Russian
  original.

\bibitem[Gor90]{gorodentsev:exceptional}
  {\sc Alexei~L. Gorodentsev}, \emph{Exceptional objects and
mutations in derived
    categories}, Helices and vector bundles, London Math.
Soc. Lecture Note Ser.,
  vol. 148, Cambridge Univ. Press, Cambridge, 1990,
pp.~57--73.

\bibitem[Gro63]{EGA3}
{\sc Alexander Grothendieck}, \emph{\'El\'ements de g\'eom\'etrie alg\'ebrique. III: \'Etude cohomologique des faisceaux coh\'erents. (Seconde partie.)}, 
Publ. Math., Inst. Hautes \'Etud. Sci. \textbf{17} (1963) 137-223.



\bibitem[GLN06]{gruson-laytimi-nagaraj}
{\sc Laurent Gruson, Fatima Laytimi, and Donihakkalu~S. Nagaraj}, \emph{On
  prime {F}ano threefolds of genus 9}, Internat. J. Math. \textbf{17} (2006),
  no.~3, 253--261.


\bibitem[Gus82]{gushel:fano-6}
{\sc N.~P. Gushel{\cprime}}, \emph{Fano varieties of genus
{$6$}}, Izv. Akad.
  Nauk SSSR Ser. Mat. \textbf{46} (1982), no.~6, 1159--1174,
1343.

\bibitem[Gus83]{gushel:fano-8-I}
{\sc \bysame}, \emph{Fano varieties of genus {$8$}}, Uspekhi
Mat. Nauk
  \textbf{38} (1983), no.~1(229), 163--164.

\bibitem[Gus92]{gushel:fano-8-II}
{\sc \bysame}, \emph{Fano {$3$}-folds of genus {$8$}},
Algebra i Analiz
  \textbf{4} (1992), no.~1, 120--134.

\bibitem[Har92]{harris}
  {\sc Joe Harris}, \emph{Algebraic geometry - A first course},
  Springer-Verlag, New York, 1992, Graduate Texts in Mathematics, No.~133.

\bibitem[Har66]{hartshorne:residues-duality}
  {\sc Robin Hartshorne}, \emph{Residues and duality},
Lecture notes of a seminar
  on the work of A. Grothendieck, given at Harvard 1963/64.
With an appendix by
  P. Deligne. Lecture Notes in Mathematics, No. 20,
Springer-Verlag, Berlin,
  1966.

\bibitem[Har77]{hartshorne:ag}
  {\sc \bysame}, \emph{Algebraic geometry}, Springer-Verlag,
New York, 1977, Graduate Texts in Mathematics, No.~52.

\bibitem[Har80]{hartshorne:stable-reflexive}
  {\sc \bysame}, \emph{Stable reflexive sheaves}, Math. Ann.
\textbf{254} (1980),
  no.~2, 121--176.

\bibitem[Hop84]{hoppe:rang-4}
{\sc Hans~J{\"u}rgen Hoppe}, \emph{Generischer {S}paltungstyp und zweite
  {C}hernklasse stabiler {V}ektorraumb\"undel vom {R}ang {$4$} auf {${\bf
  P}\sb{4}$}}, Math. Z. \textbf{187} (1984), no.~3, 345--360.

\bibitem[HL97]{huybrechts-lehn:moduli}
  {\sc Daniel Huybrechts and Manfred Lehn}, \emph{The
geometry of moduli spaces
    of sheaves}, Aspects of Mathematics, E31, Friedr. Vieweg
\& Sohn,
  Braunschweig, 1997.

\bibitem[IM07]{iliev-manivel:genus-8}
  {\sc Atanas Iliev and Laurent Manivel}, \emph{Pfaffian lines and vector bundles on {F}ano threefolds of
    genus 8}, J. Algebraic Geom. \textbf{16} (2007), no.~3, 499--530.

\bibitem[IM00a]{iliev-markushevich:cubic}
  {\sc Atanas Iliev and Dimitri Markushevich}, \emph{The
{A}bel-{J}acobi map for
    a cubic threefold and periods of {F}ano threefolds of
degree 14}, Doc. Math.
  \textbf{5} (2000), 23--47 (electronic).

\bibitem[IM00b]{iliev-markushevich:quartic}
  {\sc \bysame}, \emph{Quartic 3-fold: {P}faffians, vector
bundles, and
    half-canonical curves}, Michigan Math. J. \textbf{47}
(2000), no.~2,
  385--394.

\bibitem[IM04a]{iliev-markushevich:genus-7}
  {\sc \bysame}, \emph{Elliptic curves and rank-2 vector
bundles on the prime
    {F}ano threefold of genus 7}, Adv. Geom. \textbf{4}
(2004), no.~3, 287--318.

\bibitem[IM07c]{iliev-markushevich:sing-theta:asian}
{\sc \bysame}, \emph{Parametrization of sing
  {$\Theta$} for a {F}ano 3-fold of genus 7 by moduli of
vector bundles}, Asian  J. Math. \textbf{11} (2007), no.~3,
427--458.

\bibitem[IR05]{iliev-ranestad}
  {\sc Atanas Iliev and Kristian Ranestad}, \emph{Geometry of the {L}agrangian
    {G}rassmannian {${\bf LG}(3,6)$} with applications to {B}rill-{N}oether
    loci}, Michigan Math. J. \textbf{53} (2005), no.~2, 383--417.

\bibitem[Isk78]{iskovskih:II}
{\sc Vasilii~A. Iskovskih}, \emph{Fano threefolds. {II}}, Izv. Akad. Nauk SSSR Ser. Mat.
  \textbf{42} (1978), no.~3, 506--549, English translation in Math. U.S.S.R.
  Izvestija \textbf{12} (1978) no. 3 , 469--506 (translated by Miles Reid).

\bibitem[IP99]{fano-encyclo}
  {\sc Vasilii~A. Iskovskikh and Yuri~G. Prokhorov}, \emph{Fano varieties},
  Algebraic geometry, V, Encyclopaedia Math. Sci., vol.~47, Springer, Berlin,
  1999, pp.~1--247.

\bibitem[Kap88]{kapranov:derived}
{\sc Mikhail Kapranov},
\emph{On the derived categories of coherent sheaves on some
              homogeneous spaces}, {Invent. Math.},
\textbf{92} (1988), no.~3, {479--508}.

\bibitem[Kle69]{kleiman:grassmannians}
{\sc Steven~L. Kleiman}, \emph{Geometry on {G}rassmannians and applications to
  splitting bundles and smoothing cycles}, Inst. Hautes \'Etudes Sci. Publ.
  Math. (1969), no.~36, 281--297.

\bibitem[Kol96]{kollar:rational-on-algebraic}
  {\sc J{\'a}nos Koll{\'a}r}, \emph{Rational curves on algebraic varieties},
  Ergebnisse der Mathematik und ihrer Grenzgebiete. 3. Folge. A Series of
  Modern Surveys in Mathematics,
  [Results in Mathematics and Related Areas. 3rd Series. A Series of
  Modern Surveys in Mathematics],
  vol.~32, Springer-Verlag, Berlin, 1996.


\bibitem[Kuz96]{kuznetsov:v22}
{\sc Alexander~G. Kuznetsov}, \emph{An exceptional set of
vector bundles on the
  varieties {$V\sb {22}$}}, Vestnik Moskov. Univ. Ser. I
Mat. Mekh. (1996),
  no.~3, 41--44, 92.

\bibitem[Kuz05]{kuznetsov:v12}
  {\sc \bysame}, \emph{Derived
categories of the {F}ano threefolds
    {$V\sb {12}$}}, Mat. Zametki \textbf{78} (2005), no.~4, 579--594, English
  translation in {M}ath. {N}otes {\bf 78}, no. 3-4, 537--550 (2005).

\bibitem[Kuz06]{kuznetsov:hyperplane}
  {\sc \bysame}, \emph{Hyperplane sections and derived categories}, Izv. Ross.
  Akad. Nauk Ser. Mat. \textbf{70} (2006), no.~3, 23--128.

\bibitem[Log03]{logan}
{\sc Adam Logan},
\emph{The Kodaira dimension of moduli spaces of curves with marked points},
Amer. J. Math.
\textbf{125} (2003),
  no.~1, 105--138.

\bibitem[Mad00]{madonna:quartic}
{\sc Carlo Madonna}, \emph{Rank-two vector bundles on
general quartic
  hypersurfaces in {${\Bbb P}\sp 4$}}, Rev. Mat. Complut.
\textbf{13} (2000),
  no.~2, 287--301.

\bibitem[Mad02]{madonna:fano-cy}
{\sc \bysame}, \emph{A{CM} vector bundles on prime {F}ano
threefolds and
  complete intersection {C}alabi-{Y}au threefolds}, Rev.
Roumaine Math. Pures
  Appl. \textbf{47} (2002), no.~2, 211--222 (2003).

\bibitem[MT01]{markushevich-tikhomirov}
{\sc Dimitri Markushevich and Aleksandr ~S. Tikhomirov}, \emph{The {A}bel-{J}acobi map of a
  moduli component of vector bundles on the cubic threefold}, J. Algebraic
  Geom. \textbf{10} (2001), no.~1, 37--62.

\bibitem[Mar76]{maruyama:openness}
  {\sc Masaki Maruyama}, \emph{Openness of a family of torsion free sheaves}, J.
  Math. Kyoto Univ. \textbf{16} (1976), no.~3, 627--637.

\bibitem[Mar80]{maruyama:boundedness-small}
{\sc Masaki Maruyama}, \emph{Boundedness of semistable sheaves of small ranks},
  Nagoya Math. J. \textbf{78} (1980), 65--94.

\bibitem[Mar81]{maruyama:boundedness}
  {\sc \bysame}, \emph{On boundedness of families of torsion free sheaves}, J.
  Math. Kyoto Univ. \textbf{21} (1981), no.~4, 673--701.

\bibitem[Mer99]{mercat:petite}
{\sc Vincent Mercat}, \emph{Le probl\`eme de {B}rill-{N}oether pour des
  fibr\'es stables de petite pente}, J. Reine Angew. Math. \textbf{506} (1999),
  1--41.

\bibitem[Mo{\u\i}67]{moishezon:algebraic-homology}
{\sc Boris~G. Mo{\u\i}{\v{s}}ezon}, \emph{Algebraic homology classes on
  algebraic varieties}, Izv. Akad. Nauk SSSR Ser. Mat. \textbf{31} (1967),
  225--268.

\bibitem[Muk84]{mukai:symplectic}
  {\sc Shigeru Mukai}, \emph{Symplectic structure of the moduli space of sheaves
    on an abelian or {$K3$} surface}, Invent. Math. \textbf{77} (1984), no.~1,
  101--116.

\bibitem[Muk88]{mukai:curves-K3}
  {\sc \bysame}, \emph{Curves, {$K3$} surfaces and {F}ano {$3$}-folds of
    genus {$\leq 10$}}, Algebraic geometry and commutative algebra, Vol.\ I,
  Kinokuniya, Tokyo, 1988, pp.~357--377.

\bibitem[Muk89]{mukai:biregular}
  {\sc \bysame}, \emph{Biregular classification of {F}ano {$3$}-folds and {F}ano
    manifolds of coindex {$3$}}, Proc. Nat. Acad. Sci. U.S.A. \textbf{86} (1989),
  no.~9, 3000--3002.

\bibitem[Muk92]{mukai:curves-symmetric}
  {\sc \bysame}, \emph{Curves and symmetric spaces}, Proc. Japan Acad. Ser. A
  Math. Sci. \textbf{68} (1992), no.~1, 7--10.

\bibitem[Muk95a]{mukai:curves-symmetric-I}
  {\sc \bysame}, \emph{Curves and symmetric spaces. {I}}, Amer. J. Math.
  \textbf{117} (1995), no.~6, 1627--1644.

\bibitem[Muk95b]{mukai:vector-bundle-brill-noether}
  {\sc \bysame}, \emph{Vector bundles and {B}rill-{N}oether theory}, Current
  topics in complex algebraic geometry (Berkeley, CA, 1992/93), Math. Sci. Res.
  Inst. Publ., vol.~28, Cambridge Univ. Press, Cambridge, 1995, pp.~145--158.

\bibitem[Muk01]{mukai:brill-noether}
  {\sc \bysame}, \emph{Non-abelian {B}rill-{N}oether theory and {F}ano 3-folds}
  [translation of {S}\=ugaku {\bf 49} (1997), no. 1, 1--24; {MR} 99b:14012],
  Sugaku Expositions \textbf{14} (2001), no.~2, 125--153, Sugaku Expositions.

\bibitem[MU83]{mukai-umemura}
  {\sc Shigeru Mukai and Hiroshi Umemura}, \emph{Minimal rational threefolds},
  Algebraic geometry (Tokyo/Kyoto, 1982), Lecture Notes in Math., vol. 1016,
  Springer, Berlin, 1983, pp.~490--518.


\bibitem[OSS80]{okonek-schneider-spindler}
  {\sc Christian Okonek, Michael Schneider, and Heinz Spindler}, \emph{Vector
    bundles on complex projective spaces}, Progress in Mathematics, vol.~3,
  Birkh\"auser Boston, Mass., 1980.

\bibitem[Ott95]{ottaviani-codim}
  {\sc Giorgio Ottaviani}, \emph{Variet\`a proiettive di codimensione
    piccola}, Quaderni INDAM, Aracne, Roma 1995.

\bibitem[OS94]{ottaviani-szurek}
  {\sc Giorgio Ottaviani and Micha{\l} Szurek}, \emph{On moduli of stable
    {$2$}-bundles with small {C}hern classes on {$Q\sb 3$}}, Ann. Mat. Pura Appl.
  (4) \textbf{167} (1994), 191--241, With an appendix by Nicolae Manolache.


\bibitem[Pro90]{prokhorov:exotic}
  {\sc Yuri~G. Prokhorov}, \emph{Exotic {F}ano varieties}, Vestnik Moskov. Univ.
  Ser. I Mat. Mekh. (1990), no.~3, 34--37, 111.

\bibitem[Sch01]{schreyer:V22}
{\sc Frank-Olaf Schreyer}, \emph{Geometry and algebra of
prime {F}ano 3-folds
  of genus 12}, Compositio Math. \textbf{127} (2001), no.~3,
297--319.

\bibitem[SS85]{shiffman-sommese:vanishing}
  {\sc Bernard Shiffman and Andrew~John Sommese}, \emph{Vanishing theorems on
    complex manifolds}, Progress in Mathematics, vol.~56, Birkh\"auser Boston
  Inc., Boston, MA, 1985.

\bibitem[TiB87]{teixidor-i-bigas:half-canonical}
{\sc Montserrat Teixidor~i Bigas}, \emph{Half-canonical series on algebraic
  curves}, Trans. Amer. Math. Soc. \textbf{302} (1987), no.~1, 99--115.

\bibitem[TiB91]{teixidor-i-bigas:brill-noether-stable}
{\sc \bysame}, \emph{Brill-{N}oether theory for stable vector bundles}, Duke
  Math. J. \textbf{62} (1991), no.~2, 385--400.

\bibitem[Ten74]{tennison}
{\sc Barry Roy Tennison}, \emph{On the quartic threefold},
{Proc. London Math. Soc. (3)} \textbf{29} (1974), {714--734}.

\bibitem[Tyu04]{tyurin:fano-cy}
  {\sc Andrei~N. Tyurin}, \emph{Fano versus {C}alabi-{Y}au}, The Fano Conference,
  Univ. Torino, Turin, 2004, pp.~701--734.

\bibitem[Wei94]{weibel:homological}
  {\sc Charles~A. Weibel}, \emph{An introduction to homological algebra},
  Cambridge Studies in Advanced Mathematics, vol.~38, Cambridge University
  Press, Cambridge, 1994.

\bibitem[Wey03]{weyman:tract}
  {\sc Jerzy Weyman}, \emph{Cohomology of vector bundles and syzygies}, Cambridge
  Tracts in Mathematics, vol. 149, Cambridge University Press, Cambridge, 2003.

\end{thebibliography}

\def\cprime{$'$} \def\cprime{$'$} \def\cprime{$'$}
\def\cprime{$'$}
\def\cprime{$'$} \def\cprime{$'$} \def\cprime{$'$}
\def\cprime{$'$}
\providecommand{\bysame}{\leavevmode\hbox
to3em{\hrulefill}\thinspace}
\providecommand{\MR}{\relax\ifhmode\unskip\space\fi MR }
\providecommand{\MRhref}[2]{
  \href{http://www.ams.org/mathscinet-getitem?mr=#1}{#2}
}
\providecommand{\href}[2]{#2}

\end{document}